\numberwithin{equation}{subsection}
\begin{document}
\sloppy
\title{On Fractal Continuity Properties of Certain One-Dimensional Schrödinger Operators}
\author{Netanel Levi}
\thanks{The author was supported by NSF DMS-2052899, DMS-2155211, and
Simons 896624.}
\begin{abstract}
We construct examples of one-dimensional Schrödinger operators that illustrate the subtle nature of fractal continuity properties of spectral measures:
\begin{enumerate}
    \item Half-line operators whose spectral measures have packing dimension zero for all boundary conditions.
    \item A whole-line operator whose spectral measure has Hausdorff dimension one, while every half-line restriction (under any boundary condition) has spectral measure of Hausdorff dimension zero.
    \item For the same whole-line operator, we prove the existence of a Borel set to which the spectral measure assigns positive measure, but which has measure zero w.r.t.\ the spectral measure of the positive half-line restriction under every boundary condition.
\end{enumerate}
\end{abstract}

\maketitle
\section{Introduction}
	
In this work, we study one-dimensional Schrödinger operators acting on $\ell^2(\mathbb{N})$ and $\ell^2(\mathbb{Z})$. These operators are given by tridiagonal matrices that are either semi-infinite (in the half-line case) or doubly infinite (in the line case), with off-diagonal entries identically equal to $1$ and a real-valued sequence on the main diagonal. In both cases, the resulting operator is essentially self-adjoint \cite{Ber}. Our focus is on the continuity properties of the spectral measures associated with these operators.
	
In the half-line case, a Schrödinger operator $H$ gives rise to a one-parameter family of self-adjoint operators via rank-one perturbations. For every $\theta \in [0,\pi)$, we define  
	\[
	H_\theta = H - \tan\theta \langle\delta_1, \cdot \rangle \delta_1.
	\]  
	For each $\theta$, the vector $\delta_1$ is cyclic for $H_\theta$, and we denote its spectral measure by $\mu_\theta$. It is well known (see, e.g.\ \cite{Kato}) that the essential spectrum is invariant under finite-rank perturbations, namely there exists a closed set $\Sigma \subseteq \mathbb{R}$ such that for every $\theta \in [0,\pi)$,  
	\[
	\sigma_{\text{ess}}(H_\theta) = \Sigma.
	\]  
	Gordon \cite{Gor} and independently del Rio, Makarov, and Simon \cite{DMS} (see also \cite{DJMS}) proved that for a dense $G_\delta$ subset $\Theta \subseteq [0,\pi)$, the operators $H_\theta$ for $\theta \in \Theta$ have no point spectrum inside $\Sigma$. Since pure point spectrum represents, in a certain sense, the strongest form of spectral singularity, a natural question is whether this phenomenon extends to weaker forms of singularity, such as those characterized by local Hausdorff and packing dimensions (see Section \ref{section_prelim} for precise definitions). Examples of operators exhibiting fractional and even zero local Hausdorff dimension within the essential spectrum for all rank-one perturbations were constructed in \cite{DJLS,Z}. Our first result is the following:
	
	\begin{theorem}\label{main_thm_1}
		There exists a half-line Schrödinger operator $H$ such that $\Sigma = \sigma_{\text{ess}}(H) = [-2,2]$, and for every $\theta \in [0,\pi)$, the restriction of $\mu_\theta$ to $[-2,2]$ has packing dimension zero.
	\end{theorem}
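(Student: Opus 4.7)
My approach is to construct a sparse potential, in the spirit of the Pearson--Jitomirskaya--Last--Zlatoš framework (see \cite{Z, DJLS}), with parameters tuned so that transfer matrix norms exhibit long plateaus, which in turn force the \emph{upper} local dimension of $\mu_\theta$ to vanish at $\mu_\theta$-a.e.\ energy, for every boundary condition $\theta$.

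Concretely, I would set $V(b_k) = a_k > 0$ on a sparse, rapidly increasing sequence $(b_k)$ with $a_k \to 0$, and $V(n) = 0$ otherwise. Since $V$ vanishes at infinity, $V$ acts as a compact operator on $\ell^2(\mathbb{N})$, so $\sigma_{\text{ess}}(H_\theta) = \sigma_{\text{ess}}(H_0) = [-2,2]$ for every $\theta$, establishing $\Sigma = [-2,2]$. Passing to Prüfer variables $(R_n, \varphi_n)$ on the interior of $[-2,2]$: the amplitude $R_n(E)$ is constant on each interval $[b_k, b_{k+1})$, while at $n = b_k$ it picks up a factor of the form $1 + O(a_k^2)$ whose coefficient depends on $\varphi_{b_k}(E)$. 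Standard phase-equidistribution (using $b_{k+1} - b_k$ large) yields, for Lebesgue-a.e.\ $E \in (-2,2)$,
\[
\log R_{b_K}(E) \sim c(E) \sum_{k \le K} a_k^2 ,
\]
and taking $a_k$ with $\sum a_k^2 = \infty$ then drives $R_{b_K}(E) \to \infty$, forcing singular spectrum throughout $[-2,2]$.

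The mechanism for packing dimension zero is the plateau itself: since $R_L(E) \approx R_{b_k}(E)$ for $L \in [b_k, b_{k+1})$, if one chooses $b_{k+1}$ inductively so that $\log b_{k+1} \ge k(1 + \log R_{b_k}(E))$ on a suitable large energy set, then along $L_k := b_{k+1} - 1$,
\[
\frac{\log \|T_{L_k}(E)\|^2}{\log L_k} \xrightarrow[k \to \infty]{} 0 .
\]
Feeding this into Jitomirskaya--Last estimates comparing $\mu_\theta(B(E, 1/L))$ with norms of two linearly independent solutions of length $L$ --- estimates which depend on the transfer cocycle (itself $\theta$-independent) and on the $m$-function (depending on $\theta$ only via a Möbius transformation) --- yields
\[
\limsup_{\epsilon \to 0} \frac{\log \mu_\theta(B(E,\epsilon))}{\log \epsilon} = 0 \quad \text{for } \mu_\theta\text{-a.e.\ } E \in [-2,2] ,
\]
which is precisely $\dim_P\bigl(\mu_\theta|_{[-2,2]}\bigr) = 0$.

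I expect the main technical obstacle to lie in ensuring the plateau bound on a set of \emph{full $\mu_\theta$-measure} (simultaneously for every $\theta$), as opposed to merely full Lebesgue measure. Pointwise phase-equidistribution can fail on Lebesgue-null sets that may nonetheless carry $\mu_\theta$-mass for singular $\theta$. My plan is to carry out the inductive choice of $b_{k+1}$ after fixing a countable compact exhaustion of $(-2,2)$ and a dense countable family of $\theta$'s, enforce the plateau inequality uniformly on each such pair, and then invoke DJLS-style boundary-condition-uniform arguments \cite{DJLS} to promote the conclusion to every $\theta \in [0,\pi)$.
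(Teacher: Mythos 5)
The core mechanism of your proposal points in the wrong direction. Zero packing dimension of $\mu_\theta$ means that the \emph{upper} local exponent $\gamma^+_{\mu_\theta}(E)=\limsup_{\varepsilon\to0}\log\mu_\theta\left(E-\varepsilon,E+\varepsilon\right)/\log\varepsilon$ vanishes for $\mu_\theta$-a.e.\ $E$, i.e.\ $\mu_\theta\left(E-\varepsilon,E+\varepsilon\right)\geq\varepsilon^{t}$ for every $t>0$ and \emph{all} sufficiently small $\varepsilon$; equivalently one needs a $\liminf$ lower bound of the form $\liminf_{\varepsilon\to0}\varepsilon^{1-\eta}\im m_\theta\left(E+i\varepsilon\right)>0$ with $\eta$ arbitrarily small (Proposition \ref{packing_dim_borel_transform_prop}). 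Your plateau construction produces exactly the opposite at the plateau scales: if $\|\Phi_L(E)\|=L^{o(1)}$ up to $L=L_k$, then every normalized solution satisfies $L^{1/2-o(1)}\lesssim\|u\|_L\lesssim L^{1/2+o(1)}$, so in the Jitomirskaya--Last framework (Proposition \ref{prop_JL}) one gets $L(\varepsilon)=(1/\varepsilon)^{1\pm o(1)}$ and $\left|m_\theta\left(E+i\varepsilon\right)\right|\asymp\|v_\theta\|_{L(\varepsilon)}/\|u_\theta\|_{L(\varepsilon)}\leq\varepsilon^{-o(1)}$, whence $\mu_\theta\left(E-\varepsilon,E+\varepsilon\right)\leq 2\varepsilon\,\im m_\theta\left(E+i\varepsilon\right)\leq\varepsilon^{1-o(1)}$ along a sequence of scales $\varepsilon_k\to0$. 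That forces $\gamma^+_{\mu_\theta}(E)\geq1$, i.e.\ packing dimension one, not zero. This is the well-known splitting of Hausdorff and packing dimensions for sparse potentials, and it is exploited in the opposite direction in the paper's Theorem \ref{main_thm_2} construction: extreme sparseness makes the Borel transform large only \emph{occasionally}, which is compatible with zero Hausdorff dimension but incompatible with zero packing dimension. Moreover, with $a_k\to0$ and $\sum a_k^2=\infty$ the solutions grow subpolynomially, so even the Hausdorff dimension of $\mu_\theta$ would be one rather than zero; and in any case your final step cannot work, because $\limsup_{\varepsilon\to0}\log\mu_\theta(B(E,\varepsilon))/\log\varepsilon=0$ is a statement about all scales and cannot follow from control along the single sequence $L_k$.

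What is actually needed is huge solution growth at \emph{every} length scale, together with a polynomial bound on the $\mu_\theta$-typical solution. The paper achieves this with barriers placed at every $n=k^2$ (polynomially dense, not sparse) whose heights grow so fast that the single matrix $T_{k^2}(E)$ dominates the whole preceding product, giving $\max_{u\in\text{Sol}(E)}\|u\|_L\geq L^L$ for all $L\geq4$ (Corollary \ref{cor_L_norms}); hence $\omega(L(\varepsilon))=1/\varepsilon$ forces $L(\varepsilon)$ to grow only roughly logarithmically in $1/\varepsilon$, and combining the Damanik--Tcheremchantsev lower bound (Proposition \ref{DT_prop}) with the Last--Simon generalized eigenfunction bound (Proposition \ref{prop_gef_bd}) yields $\im m_\theta\left(E+i\varepsilon\right)\geq\varepsilon^{-t}$ for every $t<1$, all small $\varepsilon$, $\mu_\theta$-a.e.\ $E$ and every $\theta$, after which Proposition \ref{packing_dim_borel_transform_prop} gives $\gamma^+_{\mu_\theta}=0$ and hence zero packing dimension. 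Your essential-spectrum argument is fine for your potential, but the singularity-and-plateau mechanism would have to be replaced by a construction of this dense, rapidly growing type; tuning $b_{k+1}$ so that transfer matrices grow slowly can only push the upper local dimension up, not down.
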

	
	\begin{remark}
		\begin{enumerate}
			\item Since the packing dimension of a measure is always greater than or equal to its Hausdorff dimension, it follows that the family of operators in Theorem \ref{main_thm_1} also has Hausdorff dimension zero for all $\theta \in [0,\pi)$.
			\item There are known examples of operators which have zero packing dimension (even pure point spectrum) for all rank-one perturbations \cite{JK}. However, it was recently shown that for these examples, the spectrum does not contain an interval \cite{KPS}.
            \item The proof of Theorem \ref{main_thm_1} is based on a method of analyzing packing dimensional properties of the spectral measures based on asymptotic properties of solutions, which was recently developed in \cite{JLT}. 
		\end{enumerate}		
	\end{remark}

	In the second part of this work, we study certain line Schrödinger operators and the connections between these operators and their half-line restrictions. Given a Schrödinger operator $H$ acting on $\ell^2(\mathbb{Z})$, let $(H_\theta^\pm)_{\theta \in [0,\pi)}$ and $(\mu_\theta^\pm)_{\theta \in [0,\pi)}$ denote the families of rank-one perturbations and corresponding spectral measures arising from the restrictions of $H$ to the positive and negative half-lines (see Section \ref{section_line_ops} for precise definitions). Additionally, let $\mu$ be the sum of the spectral measures of $\delta_0$ and $\delta_1$ with respect to $H$.
	
	Using techniques developed in \cite{JL1} and further refined in \cite{DKL,KKL}, it was shown in \cite{DKL} that, roughly speaking, in terms of local Hausdorff continuity properties, the line operator is continuous at least as much as its half-line restrictions (see Theorem \ref{DKL_Cor}). On the other hand, in \cite{JL2}, the authors state that techniques from \cite{JL1,JL2} can be used to construct an example of a line operator with a (Hausdorff) one-dimensional spectrum, while the spectral measures of all of its half-line restrictions have Hausdorff dimension zero. Our second result confirms this claim:
	
	\begin{theorem}\label{main_thm_2}
		There exists a line Schrödinger operator $H$ whose essential spectrum is $\left[-2,2\right]$, and such that $\mu|_{\left[-2,2\right]}$ has Hausdorff dimension $1$, whereas for every $\theta \in [0,\pi)$, $\mu_\theta^\pm$ has Hausdorff dimension zero.
	\end{theorem}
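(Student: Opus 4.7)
The plan is to construct $H$ as a sparse Schrödinger operator on $\mathbb{Z}$ whose potential consists of isolated bumps at positions $\pm a_k$, with $a_k$ growing very rapidly, and coupling constants $\lambda_k$, chosen so that two properties hold simultaneously: first, each half-line restriction is of the type constructed in Theorem \ref{main_thm_1}, so that every $\mu_\theta^\pm$ has packing (and hence Hausdorff) dimension zero on $[-2,2]$; second, on the whole line the half-line transfer matrices combine, for Lebesgue-a.e.\ $E \in [-2,2]$, into a polynomially bounded solution of $H u = E u$. Since $V_n \to 0$, $\sigma_{\text{ess}}(H) = [-2,2]$ follows from Weyl's theorem, so the essential spectrum statement is immediate.

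The dimension zero statement on each half-line for every boundary condition follows directly from Theorem \ref{main_thm_1} applied to the restriction of the construction to each side. In addition, the analysis underlying that theorem provides, for Lebesgue-a.e.\ $E \in [-2,2]$, both a superpolynomial lower bound on the transfer matrix norms $\|T_L^\pm(E)\|$ and explicit control of the Prüfer angles (the expanding versus contracting directions) at each barrier — exactly the ingredients needed for the line argument.

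For the dimension one conclusion on the line, I would follow the scheme outlined in \cite{JL2}, based on Jitomirskaya--Last power-law subordinacy. The key idea is to place the bumps and tune the couplings so that, for a.e.\ $E \in [-2,2]$, the expanding direction of the half-line transfer matrix on one side is, asymptotically, the contracting direction on the other. This would be arranged inductively: after the $k$-th pair of bumps is placed, the position and amplitude of the next pair are chosen to cancel the leading growth produced by the previous ones on the opposite half-line. The result is a globally polynomially bounded solution of $H u = E u$ at a.e.\ $E \in [-2,2]$, and power-law subordinacy on $\mathbb{Z}$ (as developed in \cite{JL1,JL2}) then yields local Hausdorff dimension $1$ at each such energy, hence $\dim_H(\mu|_{[-2,2]}) = 1$.

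The principal obstacle is the simultaneous balancing of the two requirements. The sparse structure has to be aggressive enough to destroy subordinacy on each half-line and force zero dimension for every boundary condition, yet the positions and couplings on the two sides must be coordinated delicately enough that the expanding/contracting directions align at almost every energy. Controlling the Prüfer angles at all previously placed barriers while adjusting the next pair of bumps — so that this alignment persists on a full-measure set of energies and no adjustment introduced to enforce one property spoils the other — is the technical heart of the argument.
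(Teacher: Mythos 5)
Your outline correctly identifies the setting (sparse barriers, half-line zero dimensionality from superexponentially large bumps, and the JL2 circle of ideas), but the mechanism you propose for the dimension-one statement on the line is not workable, and it is not the one the paper uses. You want to tune positions and couplings so that, for Lebesgue-a.e.\ $E\in[-2,2]$, the contracting direction of one half-line agrees (asymptotically) with the non-expanding direction of the other, producing a globally polynomially bounded solution. There are two problems. First, at each fixed $E$ the non-expanding direction on each side is a single $E$-dependent boundary condition, and it rotates rapidly with $E$; forcing the two to coincide is a codimension-one condition at every energy, and a countable sequence of choices of bump positions/amplitudes cannot enforce it on a set of positive Lebesgue (or $\mu$) measure -- each new bump can only arrange cancellation at isolated energies, not almost everywhere. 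Second, even if such alignment were possible, it would prove the wrong thing: a solution that is non-expanding on both sides of a sparse-barrier potential is subordinate at $\pm\infty$, so by Proposition \ref{sub_thm_line} you would be manufacturing support for the \emph{singular} part of $\mu$ (in the extreme case, eigenvalues), not one-dimensionality. Upper bounds on the local dimension of $\mu$ are never obtained from the existence of one well-behaved solution; in the Jitomirskaya--Last/DKL framework they require an upper bound on the Borel transform, i.e.\ control of the ratio of the largest to the smallest solution norms at the relevant length scale.

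The paper's proof avoids any cooperation between the expanding/contracting directions of the two sides. The bumps are placed \emph{alternately} (not symmetrically at $\pm a_k$), with each new free stretch chosen, via Corollary \ref{bdd_m_cor}, so long that the half-line $m$-functions of the side that was last perturbed ``recover'': for every small $\delta$ and every $E\in[-2,2]$, at least one of $\sup_\theta\delta^{1-\alpha}|m_\pm^\theta(E+i\delta)|\le 1$ holds. The inequality of Damanik--Killip--Lenz (Proposition \ref{DKL_Cor}) then bounds the whole-line Borel transform $M$ by the better of the two sides at every scale, and Propositions \ref{DJLS_thm} and \ref{prop_hp_decomposition} convert $\limsup_{\varepsilon\to0}\varepsilon^{1-\alpha}|M(E+i\varepsilon)|<\infty$ for all $\alpha<1$ into Hausdorff dimension one for $\mu|_{[-2,2]}$. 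Note also two further corrections to your plan: the half-line zero-dimensionality should come from the sparse construction of Theorem \ref{thm_zero_hausdorff_dim} (growth condition (\ref{eq_potential_growth}) forcing $\overline{\gamma}(E)\ge 1$ and Proposition \ref{JLTHM}), not from the Theorem \ref{main_thm_1} potential supported on the squares, whose gaps are too short to allow the $m$-functions to recover; and placing the bumps symmetrically at $\pm a_k$ would make both sides ``bad'' at the same scales, destroying exactly the interlacing of scales on which the correct argument rests.
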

	
	In the final part of this work, we show that every operator satisfying the properties listed in Theorem \ref{main_thm_2} also possesses another intriguing property:
	
	\begin{theorem}\label{main_thm_3}
		Let $H$ be a Schrödinger operator which satisfies the properties listed in Theorem \ref{main_thm_2}. Then there exists a Borel set $A \subseteq \mathbb{R}$ such that $\mu(A) > 0$, while for every $\theta \in [0,\pi)$, $\mu_\theta^+(A) = 0$.
	\end{theorem}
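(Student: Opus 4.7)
My approach is to construct $A$ by disintegrating $\mu$ along the pairwise disjoint Aronszajn--Donoghue supports of the rank-one family $\{\mu_\theta^+\}_\theta$, and then establishing fiberwise mutual singularity between the fiber measures of $\mu$ and each $\mu_\theta^+$.

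First I would set up the relevant structure. Let $F^+(z) := \int (y-z)^{-1}\,d\mu_0^+(y)$ be the Borel transform of $\mu_0^+$, and for each $\theta \in [0,\pi)$ set
\[
T_\theta^+ := \{x \in [-2,2] : F^+(x+i0) = \cot\theta\},
\]
with $\theta = 0$ interpreted as $|F^+(x+i\varepsilon)| \to \infty$. By the Aronszajn--Donoghue theory of rank-one perturbations, $\mu_\theta^+$ is concentrated on $T_\theta^+$, these Borel sets are pairwise disjoint for distinct $\theta$, and their union $T^+$ is a Borel subset of $[-2,2]$. Since every half-line measure $\mu_\theta^\pm$ has Hausdorff dimension zero, $\mu$ has no a.c.\ component either; combined with the standard formula relating the whole-line diagonal Green's function to $F^+(z) + F^-(z)$, this places $\mu$ entirely on the level set $\{x : F^+(x+i0) + F^-(x+i0) = -x\} \subseteq T^+ \cap T^-$.

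Next I would disintegrate $\mu$ along the Borel map $x \mapsto F^+(x+i0)$ (restricted to $T^+$), writing $\mu = \int \mu_\theta\, d\rho(\theta)$ with fiber measures $\mu_\theta$ on $T_\theta^+$ and reference $\rho$ a Borel measure on $[0,\pi)$. The key claim is that for $\rho$-a.e.\ $\theta$, $\mu_\theta \perp \mu_\theta^+$ as measures on $T_\theta^+$. Given this, I would pick Borel sets $A_\theta \subseteq T_\theta^+$ (depending Borel measurably on $\theta$ by a standard selection theorem, e.g.\ Kuratowski--Ryll-Nardzewski) with $\mu_\theta(A_\theta) > 0$ and $\mu_\theta^+(A_\theta) = 0$, and set $A := \bigcup_\theta A_\theta$. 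Disjointness of the $T_\theta^+$'s then gives $\mu_{\theta_0}^+(A) = \mu_{\theta_0}^+(A \cap T_{\theta_0}^+) = 0$ for every $\theta_0 \in [0,\pi)$, while $\mu(A) = \int \mu_\theta(A_\theta)\, d\rho > 0$, completing the proof.

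The main obstacle is the fiberwise mutual singularity. First, $\rho$ cannot be a point mass at any $\theta_0$: otherwise $\mu$ would be supported on $T_{\theta_0}^+$, which is a support of $\mu_{\theta_0}^+$ and can be chosen to have Hausdorff dimension zero, contradicting $\dim_H \mu = 1$. More generally, a dimension-of-disintegration argument should show that the fibers $\mu_\theta$ inherit positive-dimensional mass from $\mu$ for $\rho$-a.e.\ $\theta$, forcing mutual singularity with the zero-dimensional $\mu_\theta^+$. A cleaner alternative uses that on $T_\theta^+$ the measure $\mu$ is further restricted to $\{F^-(x+i0) = -x - \cot\theta\}$: since $F^-$ is the Borel transform of the independent measure $\mu_0^-$, an orthogonality between the $F^+$- and $F^-$-level-set structures (leveraging $\dim_H(\mu_0^-) = 0$) should guarantee this slicing is $\mu_\theta^+$-null, yielding the desired singularity.
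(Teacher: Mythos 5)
Your outline (build $A$ fiberwise over the pairwise disjoint Aronszajn--Donoghue level sets $T_\theta^+$ and take a measurable union) is reasonable, but the crux of the argument --- that for $\rho$-a.e.\ $\theta$ the fiber measure $\mu_\theta$ of the disintegration of $\mu$ is singular with respect to $\mu_\theta^+$ --- is not established, and the justification you offer does not work. Disintegrating a measure of Hausdorff dimension one along the level sets of a Borel map gives no lower bound on the dimension of the fibers: fibers of a disintegration can be zero-dimensional and even atomic (disintegration along an injective map yields Dirac fibers), so ``the fibers inherit positive-dimensional mass from $\mu$'' is false as a general principle, and nothing in your setup excludes it here. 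Worse, the fiber over $\theta$ lives precisely on the set carrying the purely singular, zero-dimensional measure $\mu_\theta^+$, so there is no dimensional obstruction at all to $\mu_\theta$ being comparable to $\mu_\theta^+$; indeed, writing $H=H_-^\theta\oplus H_+^\theta+\langle\varphi_\theta,\cdot\rangle\varphi_\theta$, on $\left\{E:\theta\left(E\right)=\theta\right\}$ the measure $\mu$ is governed by $\mu_\theta^++\mu_\theta^-$, and singularity of the fiber with respect to $\mu_\theta^+$ is equivalent to the vanishing there of the derivative $d\mu_\theta^+/d\left(\mu_\theta^++\mu_\theta^-\right)$ --- which is exactly what has to be proven, not a consequence of zero-dimensionality. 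Your fallback sketch (``orthogonality between the $F^+$- and $F^-$-level-set structures'') is not an argument. Note also that if your a.e.-$\theta$ fiberwise singularity were provable by such soft means, the same argument with $+$ and $-$ exchanged would produce a single set $A$ with $\mu_\theta^\pm\left(A\right)=0$ for all $\theta$ simultaneously, which the paper explicitly records as open. (The identity you invoke for the whole-line Green's function, $F^+\left(x+i0\right)+F^-\left(x+i0\right)=-x$, is also garbled, but this is immaterial next to the main gap.)

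For comparison, the paper's route around this difficulty is: (i) a lemma on operators of the form $T_1\oplus T_2+\langle\varphi,\cdot\rangle\varphi$ shows that if $\mu_\theta^+$ and $\mu_\theta^-$ were mutually absolutely continuous on a set of positive $\mu_\theta^+$-measure, then $\mu$ would charge a set of Hausdorff dimension zero, contradicting $\dim_H\left(\mu\right)=1$; hence $\mu_\theta^+\perp\mu_\theta^-$ for each $\theta$; (ii) Kac's theorem identifies $d\mu_\theta^+/d\left(\mu_\theta^++\mu_\theta^-\right)$ with the boundary limit of $\im m_+^\theta/\left(\im m_+^\theta+\im m_-^\theta\right)$; (iii) splitting the set $\widetilde{S}$ (where the two-sided subordinate solution does not vanish at $0$ or $1$, a set of positive $\mu$-measure) according to whether the liminf of this ratio along a fixed sequence vanishes produces two sets, one null for every $\mu_\theta^+$ and the other null for every $\mu_\theta^-$, and $\mu$ must charge at least one of them. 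Even the paper only reaches the conclusion through this one-sided dichotomy; your claim to always secure the $+$ side from general disintegration and dimension considerations is the genuine gap in the proposal.
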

	
	\begin{remark}
		It is unknown if it is possible to obtain that $\mu_\theta^\pm=0$ for all $\theta\in\left[0,\pi\right)$, and for the moment this question seems to remain open.
	\end{remark}
	The rest of the paper is structured as follows. In Section \ref{section_prelim}, we present preliminaries in spectral theory, subordinacy theory, and the study of fractal continuity properties of Borel measures. In Section \ref{section_hl_ops}, we construct examples of half-line operators and prove Theorem \ref{main_thm_1}. In Section \ref{section_line_ops}, we discuss line operators and prove Theorems \ref{main_thm_2} and \ref{main_thm_3}.
	
	{\bf Acknowledgments} I would like to thank Jonathan Breuer, Svetlana Jitomirskaya and Yoram Last for useful discussions.

	\section{Preliminaries}\label{section_prelim}
	\subsection{Spectral measures of self-adjoint operators}
	Let $\mathcal{H}$ be a Hilbert space and let \mbox{$T:D\left(T\right)\to\mathcal{H}$} be a self-adjoint operator. Let $P$ be the spectral resolution of $T$. For every $\psi,\varphi\in\mathcal{H}$, the joint spectral measure of $\psi$ and $\varphi$ w.r.t.\ $T$ is given by
	\begin{center}
		$\mu_{\psi,\varphi}\left(A\right)=\langle P\left(A\right)\psi,\varphi\rangle,\,\,\,\,\,\,A\in\text{Borel}\left(\mathbb{R}\right)$.
	\end{center}
	$\mu_\psi$ is a finite Borel measure. If $\psi=\varphi$, then this is a positive measure and in that case we will denote this measure by $\mu_\psi$.
	
	It is well-known that given a bounded real-valued measurable function $f$, $f\left(T\right)$ is a well-defined self-adjoint operator $f\left(T\right):D\left(f\left(T\right)\right)\to\mathcal{H}$ (see, e.g.\ \cite{RS}).
	\begin{definition}
		A set $C\subseteq\mathcal{H}$ is called cyclic if
		\begin{center}
			$\mathcal{H}=\overline{\vspan\left\{f\left(\varphi\right):\varphi\in C,f\in C_\infty\left(\mathbb{R}\right)\right\}}$.
		\end{center}
	\end{definition}
	The following fact is well-known (see, e.g.\ \cite{DF}).
	\begin{lemma}\label{cyclicity_claim}
		Let $C\subseteq\mathcal{H}$ be a finite cyclic set and let $\mu\coloneqq\sum\limits_{\varphi\in C}\mu_\varphi$. Then for every $\psi\in\mathcal{H}$, $\mu_\psi\ll\mu$.
	\end{lemma}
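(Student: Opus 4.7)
The plan is to prove the equivalent contrapositive: I will show that if $A\subseteq\mathbb{R}$ is a Borel set with $\mu(A)=0$, then $\mu_\psi(A)=0$ for every $\psi\in\mathcal{H}$. The bridge between these two statements is the functional-calculus identity
\[
\mu_\eta(A)=\langle \chi_A(T)\eta,\eta\rangle=\|\chi_A(T)\eta\|^2,
\]
valid for every $\eta\in\mathcal{H}$, so the problem reduces to showing that the spectral projection $P_A:=\chi_A(T)$ annihilates all of $\mathcal{H}$.

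The first step is to observe that the hypothesis $\mu(A)=\sum_{\varphi\in C}\mu_\varphi(A)=0$, combined with the fact that each $\mu_\varphi$ is a positive measure, forces $\mu_\varphi(A)=0$ for every $\varphi\in C$; by the identity above this says $P_A\varphi=0$ for each $\varphi\in C$. Next, since the bounded Borel functional calculus is an algebra homomorphism, $P_A$ commutes with $f(T)$ for every $f\in C_\infty(\mathbb{R})$, so
\[
P_A\bigl(f(T)\varphi\bigr)=f(T)P_A\varphi=0
\]
for every $\varphi\in C$ and every $f\in C_\infty(\mathbb{R})$. By linearity, $P_A$ vanishes on $\vspan\{f(T)\varphi:\varphi\in C,\ f\in C_\infty(\mathbb{R})\}$.

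Finally, cyclicity of $C$ asserts precisely that this linear span is dense in $\mathcal{H}$, and $P_A$ is an orthogonal projection, hence bounded and continuous. It therefore vanishes on the entire norm closure, which is $\mathcal{H}$. Consequently $\mu_\psi(A)=\|P_A\psi\|^2=0$ for every $\psi$, establishing $\mu_\psi\ll\mu$. The finiteness of $C$ plays a role only in ensuring that $\mu$ is itself a finite Borel measure on $\mathbb{R}$; the key step—passing from annihilation on generators to annihilation on all of $\mathcal{H}$—is standard and presents no serious obstacle beyond correctly invoking the commutativity of the functional calculus and the continuity of $P_A$.
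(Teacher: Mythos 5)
Your proof is correct: annihilation of the spectral projection $P_A=\chi_A(T)$ on the generators $\varphi\in C$, propagated by commutativity of the functional calculus to the span $\{f(T)\varphi\}$ and then by continuity of $P_A$ to its closure $\mathcal{H}$, is exactly the standard argument. The paper itself does not prove this lemma but cites it as well known (from the reference \cite{DF}), and your write-up is the proof that citation stands in for, with no gaps.
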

	\begin{remark}
		Note that for every $\varphi,\psi\in\mathcal{H}$ and for every $A\in\text{Borel}\left(\mathbb{R}\right)$, we have
		\begin{center}
			$\langle P\left(A\right)\psi,\varphi\rangle\leq\langle P\left(a\right)\psi,\psi\rangle\langle P\left(A\right)\varphi,\varphi\rangle$
		\end{center}
		and so clearly we have $\mu_{\psi,\varphi}\ll\mu$ as well.
	\end{remark}
	\subsection{Hausdorff and packing measures}
	Throughout this subsection, we fix some $\alpha\in\left[0,1\right]$ and a finite positive Borel measure $\mu$. Our goal in this subsection is to present the definitions of the $\alpha$-dimensional Hausdorff and packing measures and discuss certain continuity properties of $\mu$ w.r.t.\ these measures.
	\subsubsection*{Definition of the $\alpha$-dimensional Hausdorff measure}
	Let $A\in\text{Borel}\left(\mathbb{R}\right)$. Given $\delta>0$, a $\delta$-cover of $A$ is a collection of open intervals $\left(I_j\right)_{j\in\mathbb{N}}$ such that $A\subseteq\underset{j\in\mathbb{N}}{\bigcup} I_j$, and for every $j\in\mathbb{N}$, $\left|I_j\right|\leq\delta$. Given $\delta>0$ and $A\in\text{Borel}\left(\mathbb{R}\right)$, let
	\begin{center}
		$h^\alpha\left(A\right)=\underset{\delta>0}{\lim}\,\inf\left\{\sum\limits_{j=1}^\infty\left|I_j\right|:\left(I_j\right)_{j\in\mathbb{N}}\text{ is a }\delta-\text{cover of }A\right\}$.
	\end{center}
	$h^\alpha$ is the $\alpha$-dimensional Hausdorff measure.
	
	Given $A\in\text{Borel}\left(\mathbb{R}\right)$, there exists a unique $\alpha\in\left[0,1\right]$ such that for all $\beta>\alpha$, $h^\beta\left(A\right)=0$ and for all $\beta<\alpha$, $h^\beta\left(A\right)=\infty$. We denote $\dim_H\left(A\right)\coloneqq\alpha$. We define the upper and lower Hausdorff dimension of the measure $\mu$ by
	\begin{center}
		$\dim_H^+\left(\mu\right)=\inf\left\{\dim_H\left(S\right):\mu\left(\mathbb{R}\setminus S\right)=0\right\}$,\\$\text{}$\\
		$\dim_H^-\left(\mu\right)=\inf\left\{\dim_H\left(S\right):\mu\left(S\right)>0\right\}$.
	\end{center}
	\subsubsection*{Definition of the $\alpha$-dimensional packing measure}
	Let $A\in\text{Borel}\left(\mathbb{R}\right)$. Given $\delta>0$, a $\delta$-packing of $A$ is a collection of disjoint closed intervals $\left(I_j\right)_{j\in\mathbb{N}}$ whose centers are all in $A$. Given $\delta>0$ and $A\in\text{Borel}\left(\mathbb{R}\right)$, let
	\begin{center}
		$p_0^\alpha\left(A\right)\coloneqq\underset{\delta\to 0}{\lim}\,\inf\left\{\sum\limits_{j=1}^\infty\left|I_j\right|:\left(I_j\right)_{j\in\mathbb{N}}\text{ is a }\delta-\text{packing of }A\right\}$.
	\end{center}
	The packing dimension $p^\alpha$ is given by
	\begin{center}
		$p^\alpha\left(A\right)=\inf\left\{\sum\limits_{j=1}^\infty p_0^\alpha\left(A_j\right):A\subseteq\underset{j\in\mathbb{N}}{\bigcup} A_j, A_j\in\text{Borel}\left(\mathbb{R}\right)\right\}$.
	\end{center}
	Given $A\in\text{Borel}\left(\mathbb{R}\right)$, there exists a unique $\alpha\in\left[0,1\right]$ such that for all $\beta>\alpha$, $p^\beta\left(A\right)=0$ and for all $\beta<\alpha$, $p^\beta\left(A\right)=\infty$. We denote $\dim_P\left(A\right)\coloneqq\alpha$. We define the upper and lower Hausdorff dimension of the measure $\mu$ by
	\begin{center}
		$\dim_P^+\left(\mu\right)=\inf\left\{\dim_P\left(S\right):\mu\left(\mathbb{R}\setminus S\right)=0\right\}$,\\$\text{}$\\
		$\dim_P^-\left(\mu\right)=\inf\left\{\dim_P\left(S\right):\mu\left(S\right)>0\right\}$.
	\end{center}
	We also define, for every $E\in\mathbb{R}$,
	\begin{center}
		$\gamma_\mu^-\left(E\right)=\underset{\varepsilon\to 0_+}{\liminf}\,\frac{\log\left(\mu\left(E-\varepsilon,E+\varepsilon\right)\right)}{\log\varepsilon}$,\\
		$\gamma_\mu^+\left(E\right)=\underset{\varepsilon\to 0_+}{\limsup}\,\frac{\log\left(\mu\left(E-\varepsilon,E+\varepsilon\right)\right)}{\log\varepsilon}$.
	\end{center}
	\begin{definition}
		Given $\alpha\in\left(0,1\right)$, we say that $\mu$ is $\alpha$-Hausdorff ($\alpha$-packing) singular if $\mu$ is supported on a set of $h^\alpha$ (of $p^\alpha$) measure zero. We say that $\mu$ is $\alpha$-Hausdorff ($\alpha$-packing) continuous if for every Borel set $A$, $h^\alpha\left(A\right)=0$ ($p^\alpha\left(A\right)=0$) implies that $\mu\left(A\right)=0$.
	\end{definition}
	Given $\alpha\in\left(0,1\right)$, let
	\begin{center}
		$\overline{D}_\mu^\alpha\left(E\right)\coloneqq\underset{\varepsilon\to0}{\limsup}\frac{\mu\left(E-\varepsilon,E+\varepsilon\right)}{\varepsilon^\alpha},$\\$\text{}$\\
		$\underline{D}_\mu^\alpha\left(E\right)\coloneqq\underset{\varepsilon\to0}{\liminf}\frac{\mu\left(E-\varepsilon,E+\varepsilon\right)}{\varepsilon^\alpha}$.
	\end{center}
	We will use the following results which can be found in the Appendix of \cite{GSB} (see also \cite{Cut}).
	\begin{lemma}\label{packing_derivative_infinity_lemma}
		For every $\eta>\gamma_\mu^+\left(E\right)$, $\underline{D}_\mu^\eta\left(E\right)=\infty$.
	\end{lemma}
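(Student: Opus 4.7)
The plan is to convert the statement about $\gamma_\mu^+(E)$ (an asymptotic statement about $\log\mu/\log\varepsilon$) into a pointwise lower bound on $\mu(E-\varepsilon,E+\varepsilon)$ that holds for all sufficiently small $\varepsilon$, and then to estimate the ratio $\mu(E-\varepsilon,E+\varepsilon)/\varepsilon^\eta$ directly. Throughout, the only delicate point is keeping track of the sign of $\log\varepsilon$ for $\varepsilon\in(0,1)$, which reverses inequalities when one clears denominators.

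First, since $\eta>\gamma_\mu^+(E)$, I would insert an auxiliary exponent: choose $\alpha$ with $\gamma_\mu^+(E)<\alpha<\eta$. By the very definition of $\limsup$, there exists $\varepsilon_0\in(0,1)$ such that for every $\varepsilon\in(0,\varepsilon_0)$,
\[
\frac{\log\mu(E-\varepsilon,E+\varepsilon)}{\log\varepsilon}<\alpha.
\]
Because $\log\varepsilon<0$ on $(0,\varepsilon_0)$, multiplying through by $\log\varepsilon$ flips the inequality to $\log\mu(E-\varepsilon,E+\varepsilon)>\alpha\log\varepsilon$, i.e.
\[
\mu(E-\varepsilon,E+\varepsilon)>\varepsilon^{\alpha}\qquad\text{for all }\varepsilon\in(0,\varepsilon_0).
\]
This is the only place where the sign of $\log\varepsilon$ plays a role, and it is also the step most likely to be miswritten, so I would take care to record it explicitly.

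Next, I would divide by $\varepsilon^\eta$ to obtain
\[
\frac{\mu(E-\varepsilon,E+\varepsilon)}{\varepsilon^\eta}>\varepsilon^{\alpha-\eta}
\]
for every $\varepsilon\in(0,\varepsilon_0)$. Since $\alpha-\eta<0$, the right-hand side tends to $+\infty$ as $\varepsilon\to 0_+$, so passing to the liminf yields $\underline{D}_\mu^\eta(E)=\infty$, as required. The argument has no genuine obstacle; the only thing to be careful about is that the defining inequality for $\limsup$ is with respect to $\log\varepsilon$, not $\varepsilon$, so the chosen intermediate exponent $\alpha$ is what makes the sign manipulation clean.
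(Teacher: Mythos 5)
Your argument is correct: choosing an intermediate exponent $\alpha$ with $\gamma_\mu^+(E)<\alpha<\eta$, using the definition of $\limsup$ to get $\mu\left(E-\varepsilon,E+\varepsilon\right)>\varepsilon^{\alpha}$ for all small $\varepsilon$ (with the sign of $\log\varepsilon$ handled properly), and then dividing by $\varepsilon^{\eta}$ is exactly the standard proof of this fact. Note that the paper does not prove this lemma at all -- it is quoted from the Appendix of \cite{GSB} (see also \cite{Cut}) -- so your self-contained elementary derivation is a perfectly adequate substitute; the only degenerate case, $\mu\left(E-\varepsilon,E+\varepsilon\right)=0$ for some small $\varepsilon$, forces $\gamma_\mu^+\left(E\right)=\infty$ and makes the hypothesis vacuous, so it causes no trouble.
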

	We will also use the following 
	\begin{proposition}\emph{\cite[Corollary 13]{CdO}}\label{prop_hp_decomposition}
		Let $\alpha\in\left(0,1\right)$. Denote
		\begin{center}
			$T_\infty^\alpha\coloneqq\left\{E\in\mathbb{R}:\overline{D}_\mu^\alpha\left(E\right)=\infty\right\}$,\\$\text{}$\\
			$U_\infty^\alpha\coloneqq\left\{E\in\mathbb{R}:\underline{D}_\mu^\alpha\left(E\right)=\infty\right\}$.
		\end{center}
		We have the following.
		\begin{enumerate}
			\item Denote
			\begin{center}
				$\mu_{\alphash}\coloneqq\mu\left(T_\infty^\alpha\cap\cdot\right),\,\,\mu_{\alphach}\coloneqq\mu\left(\mathbb{R}\setminus T_\infty^\alpha\cap\cdot\right)$. 
			\end{center}
			Then $\mu_{\alphash}$ is $\alpha$-Hausdorff singular and $\mu_{\alphach}$ is $\alpha$-Hausdorff continuous.
			\item Denote
			\begin{center}
				$\mu_{\alphasp}\coloneqq\mu\left(U_\infty^\alpha\cap\cdot\right),\,\,\mu_{\alphacp}\coloneqq\mu
				\left(\mathbb{R}\setminus U_\infty^\alpha\cap\cdot\right)$.
			\end{center}
			Then $\mu_{\alphasp}$ is $\alpha$-packing singular and $\mu_{\alphacp}$ is $\alpha$-packing continuous.
		\end{enumerate}
	\end{proposition}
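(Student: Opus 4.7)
The proposition is a classical Rogers--Taylor-type decomposition result, and I would prove all four assertions by a common strategy: decompose $T_\infty^\alpha$ (respectively $U_\infty^\alpha$) into dyadic level sets on which the density bound is \emph{uniform} in $\varepsilon$, and then apply a Vitali-type covering or packing argument.

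For the Hausdorff statements in item (1), write $T_\infty^\alpha = \bigcap_N T_N$ with $T_N := \{E : \overline{D}_\mu^\alpha(E) > N\}$. For singularity of $\mu_{\alphash}$, each $E \in T_N$ admits, by the definition of $\limsup$, radii $r_E$ arbitrarily small with $\mu(B(E, r_E)) \geq N r_E^\alpha$. Vitali's $5r$-covering lemma extracts a pairwise disjoint subfamily $\{B(E_j, r_j)\}$ whose $5$-dilates cover $T_N$, and then $h^\alpha(T_N) \leq 10^\alpha \sum_j r_j^\alpha \leq 10^\alpha \mu(\mathbb{R})/N \to 0$, giving $h^\alpha(T_\infty^\alpha) = 0$. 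For continuity of $\mu_{\alphach}$, given a Borel set $A$ with $h^\alpha(A) = 0$, decompose $\mathbb{R} \setminus T_\infty^\alpha = \bigcup_{N,k} B_{N,k}$ with $B_{N,k} := \{E : \mu(B(E,\varepsilon)) \leq N \varepsilon^\alpha \text{ for all } \varepsilon < 1/k\}$; any $\delta$-cover of $A \cap B_{N,k}$ with $\delta < 1/k$ has each interval $I_j$ meeting the set satisfying $\mu(I_j) \leq N |I_j|^\alpha$, yielding $\mu(A \cap B_{N,k}) \leq N \cdot h^\alpha(A) = 0$.

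The packing singularity of $\mu_{\alphasp}$ follows by an analogous argument with $\liminf$ in place of $\limsup$: on $U_{N,K} := \{E : \mu(B(E,\varepsilon)) > N \varepsilon^\alpha \text{ for all } \varepsilon < 1/K\}$, every $\delta$-packing with $\delta < 2/K$ satisfies $\mu(I_j) \geq N(|I_j|/2)^\alpha$ termwise, and disjointness of the $I_j$ yields $\sum_j |I_j|^\alpha \leq 2^\alpha \mu(\mathbb{R})/N$. Passing through the disjoint layers $U_{N,K} \setminus U_{N,K-1}$ and localizing the $\mu$-bound via inner regularity produces $p^\alpha(U_\infty^\alpha) \leq c_\alpha \mu(\mathbb{R})/N \to 0$ as $N \to \infty$.

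The main obstacle, as I see it, is the packing continuity of $\mu_{\alphacp}$. Whereas $h^\alpha(A)=0$ provides covers directly adapted to local behavior, the condition $p^\alpha(A)=0$ is phrased via infima over arbitrary countable Borel decompositions with small total $p_0^\alpha$-size---a formulation that resists immediate interplay with a density condition on a specific level set. The remedy is to establish a converse estimate: for each $N$, on the level set $C_N := \{E \in A : \underline{D}_\mu^\alpha(E) < N\}$, the radii witnessing the $\liminf$ can be selected, via a Vitali-style packing argument applied to the pointwise data, to produce $\delta$-packings whose $\alpha$-length sum dominates a multiple of $\mu(C)$ for every Borel $C \subseteq C_N$, i.e., $p_0^\alpha(C) \geq c_N \mu(C)$. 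Combined with the Munroe construction of $p^\alpha$ from $p_0^\alpha$, the assumption $p^\alpha(A)=0$ then forces $\mu(C_N)=0$ for every $N$, and a countable union in $N$ exhausts $A \setminus U_\infty^\alpha$, giving $\mu_{\alphacp}(A)=0$ as required.
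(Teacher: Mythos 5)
The paper does not actually prove this proposition; it is quoted from \cite[Corollary 13]{CdO} (with part (1) going back to \cite{RT}), so your argument can only be judged on its own merits. Most of it is sound and is the standard Rogers--Taylor-type argument. The two Hausdorff statements are correct as sketched: the Vitali $5r$-lemma gives $h^\alpha(T_\infty^\alpha)\leq h^\alpha(T_N)\leq 10^\alpha\mu(\mathbb{R})/N\to 0$, and the uniform level sets $B_{N,k}$ give $\mu(A\cap B_{N,k})\leq c\,N\,h^\alpha(A)=0$ (all of this with the usual normalizations $\sum_j|I_j|^\alpha$ and a supremum over packings in the definition of $p_0^\alpha$). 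Your plan for packing continuity --- the part you flag as the main obstacle --- is also the right one: on $C_N=\{E\in A:\underline{D}_\mu^\alpha(E)<N\}$ each point has arbitrarily small intervals $I$ with $\mu(I)<N(|I|/2)^\alpha$, and the Vitali covering theorem for the Radon measure $\mu$ (Besicovitch in $\mathbb{R}$) extracts a disjoint subfamily carrying $\mu$-almost all of any subset $C\subseteq C_N$; since these disjoint intervals are centered in $C$ they form an admissible $\delta$-packing, giving $p_0^\alpha(C)\geq 2^\alpha\mu^*(C)/N$ for \emph{arbitrary} $C\subseteq C_N$ (you need it for arbitrary sets, or for outer measure, because the decompositions in the definition of $p^\alpha$ are arbitrary), whence $p^\alpha(A)=0$ forces $\mu(C_N)=0$. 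You should name this covering-theorem input explicitly, since it is the actual engine of that step.

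The genuine gap is in the step you treat as routine: the packing \emph{singularity} of the part carried by $U_\infty^\alpha$. The termwise estimate on $U_{N,K}=\{E:\mu(E-\varepsilon,E+\varepsilon)\geq N\varepsilon^\alpha\ \forall\varepsilon<1/K\}$ (reduce to rational $\varepsilon$ to see these are Borel) is fine and gives $p_0^\alpha(S)\leq 2^\alpha\mu(\mathbb{R})/N$ for every $S\subseteq U_{N,K}$, but ``passing through the disjoint layers $U_{N,K}\setminus U_{N,K-1}$ and localizing via inner regularity'' does not close the argument: the definition of $p^\alpha$ makes you \emph{sum} the premeasure bounds over all layers, localization only improves the bound for the $K$-th layer $S_K$ to $2^\alpha\mu(\overline{S_K})/N$ (a $\delta$-packing of $S_K$ is confined merely to the $\delta$-neighborhood of $S_K$), and the closures of the disjoint layers can all charge the same part of $\mu$ --- e.g.\ they may all accumulate at a single atom --- so $\sum_K\mu(\overline{S_K})$ need not be finite and inner regularity of $\mu$ does not repair this. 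The standard fix is to drop the disjoint layers and use monotonicity instead: $U_{N,K}$ increases in $K$ to a Borel set containing $U_\infty^\alpha$, one has $p^\alpha(U_{N,K})\leq p_0^\alpha(U_{N,K})\leq 2^\alpha\mu(\mathbb{R})/N$ uniformly in $K$, and since $p^\alpha$ is a metric, Borel-regular outer measure, continuity from below along this increasing sequence yields $p^\alpha(U_\infty^\alpha)\leq 2^\alpha\mu(\mathbb{R})/N$ for every $N$, hence $p^\alpha(U_\infty^\alpha)=0$ (this is the Falconer--Mattila argument for lower densities). With that replacement your proof is complete and is, in substance, the proof in the cited literature.
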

	\begin{remark}
		Part $1$ of Proposition \ref{prop_hp_decomposition} was originally proved by Rogers and Taylor \cite{RT}.
	\end{remark}
	\subsubsection*{The Borel transform}
	Let $\mu$ be a finite Borel measure. The \textit{Borel transform} (also called Stieltjes transform) of $\mu$ is an analytic function which maps $\mathbb{C}_+\coloneqq\left\{z\in\mathbb{C}:\im z>0\right\}$ to itself, given by
	\begin{center}
		$m\left(z\right)=\int_\mathbb{R}\frac{d\mu\left(x\right)}{x-z}$.
	\end{center}
	The boundary behavior of $m$ is strongly connected to continuity properties of $\mu$. To that end, let
	\begin{center}
		$Q_\mu^\alpha\left(E\right)\coloneqq\underset{\varepsilon\to0}{\limsup}\,\varepsilon^{1-\alpha}\im m_\mu\left(E+i\varepsilon\right)$\\$\text{}$\\
		$R_\mu^\alpha\left(E\right)=\underset{\varepsilon\to0}{\limsup}\,\varepsilon^{1-\alpha}\left|m_\mu\left(E+i\varepsilon\right)\right|$.
	\end{center}
	We will use the following
	\begin{proposition}\emph{\cite[Theorem 3.1]{DJLS}}\label{DJLS_thm}
		For every finite Borel measure and $E\in\mathbb{R}$ and $\alpha\in\left(0,1\right)$, $Q_\mu^\alpha\left(E\right),R_\mu^\alpha\left(E\right)$ and $\overline{D}_\mu^\alpha\left(E\right)$ are either all infinite, all zero or all in $\left(0,\infty\right)$.
	\end{proposition}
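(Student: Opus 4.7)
The plan is to establish two-sided inequalities between the three quantities, namely (up to multiplicative constants depending only on $\alpha$)
\[
\tfrac{1}{2}\overline{D}_\mu^\alpha(E) \leq Q_\mu^\alpha(E) \leq R_\mu^\alpha(E) \leq C_\alpha\, \overline{D}_\mu^\alpha(E),
\]
with the convention $C_\alpha \cdot \infty = \infty$. Given these bounds the trichotomy is immediate: all three quantities must lie simultaneously in $\{0\}$, in $(0,\infty)$, or in $\{\infty\}$.

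The easy inequalities are essentially trivial. Since $\im m_\mu(E+i\varepsilon) \leq |m_\mu(E+i\varepsilon)|$, we get $Q_\mu^\alpha(E) \leq R_\mu^\alpha(E)$. For the lower bound on $Q$, the pointwise estimate $\varepsilon/((x-E)^2+\varepsilon^2) \geq 1/(2\varepsilon)$, valid on $|x-E| \leq \varepsilon$, gives
\[
\im m_\mu(E+i\varepsilon) = \int \frac{\varepsilon \, d\mu(x)}{(x-E)^2+\varepsilon^2} \geq \frac{\mu(E-\varepsilon, E+\varepsilon)}{2\varepsilon},
\]
so multiplying by $\varepsilon^{1-\alpha}$ and taking $\limsup$ yields $Q_\mu^\alpha(E) \geq \tfrac{1}{2}\overline{D}_\mu^\alpha(E)$.

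The main content is the upper bound on $R_\mu^\alpha(E)$, which I would obtain via a dyadic decomposition. Starting from $|m_\mu(E+i\varepsilon)| \leq \int d\mu(x)/\sqrt{(x-E)^2+\varepsilon^2}$, fix $\eta > \overline{D}_\mu^\alpha(E)$ and choose $\delta_0 > 0$ so that $\mu(E-\delta, E+\delta) \leq \eta \delta^\alpha$ for $\delta \leq \delta_0$. Split the integral into the core $\{|x-E| \leq \varepsilon\}$, the dyadic shells $\{2^k\varepsilon < |x-E| \leq 2^{k+1}\varepsilon\}$ for $0 \leq k < K$ (where $K$ is the largest integer with $2^{K+1}\varepsilon \leq \delta_0$), and the tail $\{|x-E| > 2^K\varepsilon\}$. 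On the core and $k$-th shell the integrand is at most $1/\varepsilon$ and $1/(2^k\varepsilon)$ respectively, producing contributions $\leq \eta\varepsilon^{\alpha-1}$ and $\leq 2^\alpha \eta \cdot 2^{(\alpha-1)k}\varepsilon^{\alpha-1}$; summing the shells gives a convergent geometric series since $\alpha < 1$, bounded by $C_\alpha \eta \varepsilon^{\alpha-1}$. The tail is bounded by $\mu(\mathbb{R})/(2^K\varepsilon) \leq 2\mu(\mathbb{R})/\delta_0$, a constant in $\varepsilon$, which vanishes once multiplied by $\varepsilon^{1-\alpha}$ and passed through $\limsup$. Letting $\eta \downarrow \overline{D}_\mu^\alpha(E)$ then closes the bound. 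The only delicate point is controlling the contribution from regions far from $E$, where no scale-$\alpha$ information about $\mu$ is available beyond finiteness; the condition $\alpha < 1$ is precisely what ensures both the dyadic geometric series converges and the constant tail is negligible at scale $\varepsilon^{\alpha-1}$.
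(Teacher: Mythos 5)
Your proof is correct. The paper itself gives no argument for this proposition --- it is quoted directly from del Rio--Jitomirskaya--Last--Simon \cite[Theorem 3.1]{DJLS} --- and your two-sided comparison $\tfrac12\overline{D}_\mu^\alpha(E)\leq Q_\mu^\alpha(E)\leq R_\mu^\alpha(E)\leq C_\alpha\overline{D}_\mu^\alpha(E)$, with the core estimate from the Poisson kernel and the dyadic-shell bound on $|m_\mu|$ using $\alpha<1$ for both the geometric series and the negligible tail, is essentially the standard argument behind that theorem. The only blemish is a harmless constant: with $K$ the largest integer satisfying $2^{K+1}\varepsilon\leq\delta_0$ one gets $2^{K}\varepsilon>\delta_0/4$, so the tail bound is $4\mu(\mathbb{R})/\delta_0$ rather than $2\mu(\mathbb{R})/\delta_0$, which changes nothing since it is still independent of $\varepsilon$.
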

	We will also use the following theorem of Kac:
	\begin{proposition}\emph{\cite{Kac}}\label{rnd_m_func_thm}
		Let $\mu,\nu$ be probability measures. Then for $\nu$-almost every $E\in\mathbb{R}$, we have
		\begin{center}
			$\frac{d\mu}{d\nu}\left(E\right)=\underset{\varepsilon\to0}{\lim}\frac{\im m_\mu\left(E+i\varepsilon\right)}{\im m_\nu\left(E+i\varepsilon\right)}$.
		\end{center}
	\end{proposition}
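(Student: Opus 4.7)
I would derive this from the Besicovitch differentiation theorem for Borel measures on $\mathbb{R}$, via the Poisson-integral representation of $\im m$. For any finite Borel measure $\sigma$ we have
\[
\tfrac{1}{\pi}\im m_\sigma(E+i\varepsilon) = \int_\mathbb{R} P_\varepsilon(x-E)\,d\sigma(x),\qquad P_\varepsilon(t)=\tfrac{1}{\pi}\tfrac{\varepsilon}{t^2+\varepsilon^2},
\]
so the quotient in the statement equals $(P_\varepsilon*\mu)(E)/(P_\varepsilon*\nu)(E)$. Writing the Lebesgue decomposition $\mu = f\,d\nu + \mu_s$ with $f=d\mu/d\nu$ and $\mu_s\perp\nu$, it suffices by linearity to establish, for $\nu$-a.e.\ $E$,
\[
\frac{(P_\varepsilon*(f\,d\nu))(E)}{(P_\varepsilon*\nu)(E)}\xrightarrow[\varepsilon\to 0]{} f(E),\qquad \frac{(P_\varepsilon*\mu_s)(E)}{(P_\varepsilon*\nu)(E)}\xrightarrow[\varepsilon\to 0]{} 0.
\]

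The main analytic tool is the standard dyadic annular decomposition of the Poisson integral: there exist absolute constants $c_1,c_2>0$ such that, for every finite measure $\sigma$ and every $E\in\mathbb{R}$,
\[
c_1\sum_{k=0}^\infty 2^{-k}\frac{\sigma(B_{2^k\varepsilon}(E))}{2^k\varepsilon}\ \leq\ (P_\varepsilon*\sigma)(E)\ \leq\ c_2\sum_{k=0}^\infty 2^{-k}\frac{\sigma(B_{2^k\varepsilon}(E))}{2^k\varepsilon},
\]
obtained by grouping $t$ into the dyadic annuli $\{2^{k-1}\varepsilon\leq|t-E|<2^k\varepsilon\}$ on each of which $P_\varepsilon(t-E)$ is comparable to $(4^k\varepsilon)^{-1}$. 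Thus $(P_\varepsilon*\sigma)(E)$ is comparable to a geometrically weighted sum of normalized symmetric averages of $\sigma$ at the dyadic scales $2^k\varepsilon$. By the Besicovitch differentiation theorem in $\mathbb{R}$, for $\nu$-a.e.\ $E$ one has $\mu(B_r(E))/\nu(B_r(E))\to f(E)$ and $\mu_s(B_r(E))/\nu(B_r(E))\to 0$ as $r\to 0$.

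The step I expect to be the main obstacle is transferring these symmetric-derivative statements to the Poisson ratios, since the Poisson average mixes information from \emph{all} dyadic scales at once. I would handle this by an $\varepsilon/N$ splitting: fix a $\nu$-generic point $E$ and $\delta>0$, choose $N$ with $\sum_{k>N}2^{-k}<\delta$, pick $\varepsilon_0$ small enough that for $\varepsilon<\varepsilon_0$ and $0\leq k\leq N$ the symmetric ratio $\mu(B_{2^k\varepsilon}(E))/\nu(B_{2^k\varepsilon}(E))$ lies within $\delta$ of $f(E)$ (resp.\ the corresponding ratio for $\mu_s$ is within $\delta$ of $0$). The head of the dyadic sum then contributes the correct limit up to $O(\delta)$, while the tail is controlled using the trivial bounds $\mu(B_{2^k\varepsilon}(E))\leq \mu(\mathbb{R})$ against a lower bound $(P_\varepsilon*\nu)(E)\gtrsim \nu(B_\varepsilon(E))/\varepsilon$, which is non-negligible at $\nu$-a.e.\ $E$ since $\liminf_{r\to 0}\nu(B_r(E))/r>0$ along a subsequence for $\nu$-a.e.\ $E$ in the support of $\nu$ (or, alternatively, by applying the same dyadic bounds to the denominator and cancelling scales). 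Taking $\delta\to 0$ yields both limits, completing the proof.
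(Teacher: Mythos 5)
The paper gives no proof of this proposition (it is quoted directly from Kac's paper), so there is no internal argument to compare with and I judge your proposal on its own terms. Your overall plan --- the Poisson representation of $\im m$, Besicovitch differentiation of $\mu$ and $\mu_s$ with respect to $\nu$, and a head/tail splitting in the radial variable with the tail beaten by a lower bound on $(P_\varepsilon*\nu)(E)$ of order $\nu(B_\varepsilon(E))/\varepsilon$ --- is the standard route and the statement is true. But the specific transfer mechanism you propose has a genuine gap: the dyadic comparison $c_1S_\varepsilon(\sigma)\le (P_\varepsilon*\sigma)(E)\le c_2S_\varepsilon(\sigma)$, with $S_\varepsilon(\sigma)$ your weighted sum of ball averages, holds only with absolute constants $c_1<c_2$, and these constants do not cancel in the quotient. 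Even with perfect control of the head and tail of the two dyadic sums, this yields only
\[
\frac{c_1}{c_2}\,f(E)\ \le\ \liminf_{\varepsilon\to0}\frac{\im m_\mu\left(E+i\varepsilon\right)}{\im m_\nu\left(E+i\varepsilon\right)}\ \le\ \limsup_{\varepsilon\to0}\frac{\im m_\mu\left(E+i\varepsilon\right)}{\im m_\nu\left(E+i\varepsilon\right)}\ \le\ \frac{c_2}{c_1}\,f(E),
\]
not the limit itself. Nor can you repair this by comparing annulus by annulus: on the annulus $A_k$ the ratio $\mu(A_k)/\nu(A_k)$ is a ratio of \emph{differences} of ball measures, which is not controlled by the ball ratios $\mu(B_r(E))/\nu(B_r(E))$, so ``cancelling scales'' does not work either.

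The fix is to replace comparability by an exact monotone representation of the kernel, so that the \emph{same} nonnegative weight multiplies $\mu(B_r(E))$ and $\nu(B_r(E))$: either the layer-cake formula $(P_\varepsilon*\sigma)(E)=\int_0^\infty \sigma\left(\left\{x:P_\varepsilon(x-E)>s\right\}\right)ds$, whose superlevel sets are intervals centered at $E$, or equivalently $(P_\varepsilon*\sigma)(E)=\int_0^\infty\left(-\partial_rP_\varepsilon(r)\right)\sigma(B_r(E))\,dr$. With such an identity, the Besicovitch bounds $(f(E)-\delta)\nu(B_r(E))\le\mu(B_r(E))\le(f(E)+\delta)\nu(B_r(E))$ for $r\le R$ transfer exactly to the portion $r\le R$, while the portion $r>R$ of the numerator is at most $\mu(\mathbb{R})P_\varepsilon(R)\le\varepsilon/(\pi R^2)$; dividing by $(P_\varepsilon*\nu)(E)\ge P_\varepsilon(\varepsilon)\nu(B_\varepsilon(E))=\nu(B_\varepsilon(E))/(2\pi\varepsilon)$ gives an error of at most $2\varepsilon^2/\left(R^2\nu(B_\varepsilon(E))\right)$, which tends to $0$ because $\liminf_{r\to0}\nu(B_r(E))/r>0$ for $\nu$-a.e.\ $E$ (finite positive Lebesgue density on the absolutely continuous part, infinite on the singular part). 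Note that you need this liminf over \emph{all} small $r$, not merely ``along a subsequence'' as you wrote, since the tail must be small for every small $\varepsilon$; fortunately the full statement does hold $\nu$-a.e. With these two repairs your argument closes.
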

        The following proposition, proved in \cite{JLT}, provides a certain connection between the upper and lower local dimensions of $\mu$, given some information about the boundary behavior of its Borel transform.
	\begin{proposition}\emph{\cite[Theorem 1.3]{JLT}}\label{packing_dim_borel_transform_prop}
		Let $0\leq\eta<1$. Suppose that $\underset{\varepsilon\to 0}{\liminf}\,\varepsilon^{1-\eta}\im m\left(E+i\varepsilon\right)>0$. Then
		\begin{equation}
			\gamma_\mu^+\left(E\right)\leq\frac{\eta\left(2-\gamma_\mu^-\left(E\right)\right)}{2-\eta}.
		\end{equation}
		In particular, $\gamma_\mu^+\left(E\right)\leq\frac{2\eta}{2-\eta}$.
	\end{proposition}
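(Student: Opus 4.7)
The natural strategy is a proof by contradiction. Suppose $\gamma_\mu^+(E)>\tfrac{\eta(2-s)}{2-\eta}$, where $s:=\gamma_\mu^-(E)$. By continuity of the function $s\mapsto \tfrac{\eta(2-s)}{2-\eta}$ we may choose $\beta'<\gamma_\mu^+(E)$ and $s'\le s$ (with $s'<s$ whenever $s>0$) such that $\beta'>\tfrac{\eta(2-s')}{2-\eta}$ still holds. Set $N(u):=\mu(E-u,E+u)$. From the definitions of $\gamma_\mu^\pm(E)$, there exist $u_0>0$ with $N(u)\le u^{s'}$ for all $u\le u_0$, and a subsequence $\varepsilon_n\downarrow 0$ with $N(\varepsilon_n)\le \varepsilon_n^{\beta'}$.

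The main analytic tool is the identity
\[
\im m_\mu(E+i\delta)=2\delta\int_0^\infty \frac{u\,N(u)}{(u^2+\delta^2)^2}\,du,
\]
obtained by writing $\tfrac{1}{(x-E)^2+\delta^2}=\int_{|x-E|}^\infty \tfrac{2u\,du}{(u^2+\delta^2)^2}$ and applying Fubini. I would apply this with $\delta=\delta_n:=\varepsilon_n^t$ for a free exponent $t>1$, splitting the integral at $u=\varepsilon_n$. On $(0,\varepsilon_n)$, monotonicity of $N$ together with $N(\varepsilon_n)\le\varepsilon_n^{\beta'}$ and $\int_0^{\varepsilon_n} u(u^2+\delta_n^2)^{-2}\,du\le \tfrac{1}{2\delta_n^2}$ yield a contribution at most $\varepsilon_n^{\beta'}/\delta_n$. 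On $(\varepsilon_n,u_0)$, using $N(u)\le u^{s'}$ and $(u^2+\delta_n^2)^2\ge u^4$, direct integration of $u^{s'-3}$ gives a contribution at most $\tfrac{2}{2-s'}\,\delta_n \varepsilon_n^{s'-2}$; the tail $u>u_0$ contributes only $O(\delta_n)$ by boundedness of $\mu$.

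Combining these bounds with the hypothesized lower bound $\im m_\mu(E+i\delta_n)\ge c\,\delta_n^{\eta-1}$ yields
\[
c\,\varepsilon_n^{t(\eta-1)}\le C_1\,\varepsilon_n^{\beta'-t}+C_2\,\varepsilon_n^{t+s'-2}+O(\varepsilon_n^t).
\]
Both dominant terms on the right are $o\!\bigl(\varepsilon_n^{t(\eta-1)}\bigr)$ exactly when $t\in\bigl(\tfrac{2-s'}{2-\eta},\,\tfrac{\beta'}{\eta}\bigr)$, an open interval that is nonempty precisely because $\beta'>\tfrac{\eta(2-s')}{2-\eta}$. Choosing any such $t$ produces the required contradiction as $n\to\infty$, and the ``in particular'' statement follows by specializing to $\gamma_\mu^-(E)=0$. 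I expect the main conceptual obstacle to be the choice of testing scale: at $\delta_n=\varepsilon_n$ the same computation yields only $\gamma_\mu^+(E)\le\eta$, strictly weaker than the target bound whenever $s>\eta$, so exploiting the small mass $N(\varepsilon_n)$ via the trade-off between the two terms at a strictly smaller scale $\delta_n\ll\varepsilon_n$ is essential.
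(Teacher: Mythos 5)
The paper does not prove this proposition at all: it is imported verbatim as \cite[Theorem 1.3]{JLT}, so there is no internal argument to compare yours against. Judged on its own, your proof is correct and self-contained. The identity $\im m_\mu(E+i\delta)=2\delta\int_0^\infty u\,N(u)(u^2+\delta^2)^{-2}\,du$ is right (Tonelli on the layer-cake representation of $((x-E)^2+\delta^2)^{-1}$), the three regional bounds $\varepsilon_n^{\beta'-t}$, $\tfrac{2}{2-s'}\varepsilon_n^{t+s'-2}$, $O(\varepsilon_n^t)$ are each justified, and comparing with the hypothesis $\im m_\mu(E+i\delta_n)\geq c\,\delta_n^{\eta-1}$ at $\delta_n=\varepsilon_n^t$ does yield a contradiction exactly when $t\in\bigl(\tfrac{2-s'}{2-\eta},\tfrac{\beta'}{\eta}\bigr)$, which is nonempty precisely under the assumed failure of the inequality; the ``in particular'' statement follows since $\gamma_\mu^-(E)\geq 0$.

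Two small points of hygiene, neither a gap. First, your announcement that the free exponent satisfies $t>1$ is never used by the estimates (the split at $u=\varepsilon_n$ and the bounds hold for any $t>0$), and as literally stated it could clash with the interval you end up choosing from unless you also arrange $\beta'>\eta$; this is always possible because $\gamma_\mu^-(E)\leq\gamma_\mu^+(E)$ (if $\gamma_\mu^-(E)\leq\eta$ the threshold itself exceeds $\eta$, and if $\gamma_\mu^-(E)>\eta$ then $\gamma_\mu^+(E)>\eta$), so either drop the claim $t>1$ or fix $\beta'>\eta$ explicitly. Second, when $\gamma_\mu^-(E)=0$ you should read $N(u)\leq u^{s'}$ as $N(u)\leq\mu(\mathbb{R})$ (a harmless constant), and you should note $s'<2$ so that $\int_{\varepsilon_n}^{u_0}u^{s'-3}\,du$ is controlled as you claim (automatic here, since one may always shrink $s'$ at the cost of a larger, still admissible, threshold). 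With those cosmetic repairs the argument stands as a legitimate proof of the quoted result, independent of \cite{JLT}.
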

	\subsection{Half-Line Operators}\label{section_hl_ops}
	In this section, we discuss half-line Schr\"{o}dinger operators, namely operators \mbox{$H:D\left(H\right)\subseteq\ell^2\left(\mathbb{N}\right)\to\ell^2\left(\mathbb{N}\right)$} of the form
	\begin{equation}\label{eq_hl_op}
		\left(H\psi\right)\left(n\right)=\begin{cases}
			\psi\left(n-1\right)+\psi\left(n+1\right)+V\left(n\right)\psi\left(n\right) & n\geq 2\\
			\psi\left(2\right)+V\left(1\right)\psi\left(1\right) & n=1
		\end{cases}
	\end{equation}
	where $V:\mathbb{N}\to\mathbb{R}$ is some sequence. Such operators are essentially self-adjoint \cite{Ber}, and it is not hard to see that $\delta_1$ is cyclic for $H$. We denote its spectral measure by $\mu$. We will also be interested in rank-one perturbations of $H$. To that end, for every $\theta\in\left[0,\pi\right)$, we define
	\begin{equation}\label{eq_rank_one_pert_hl}
		H_\theta=\begin{cases}
			H-\tan\theta\langle\delta_1,\cdot\rangle\delta_1 & \theta\neq\frac{\pi}{2}\\
			\widetilde{H} & \theta=\frac{\pi}{2}
		\end{cases},
	\end{equation}
	where $\widetilde{H}$ is defined by (\ref{eq_hl_op}) with the potential $\widetilde{V}$ given by shifting $V$ once to the left. For every $\theta\in\mathbb{R}$, $\delta_1$ is cyclic for $H_\theta$. We denote its spectral measure and respective Borel transform by $\mu_\theta$ and $m_\theta$.
	
	We will need the following lemma.
	\begin{lemma}\label{lemma_ess_spec}
		Let $\left(n_k\right)_{k=1}^{\infty}$ be strictly increasing sequence of natural numbers which satisfies \mbox{$\underset{k\to\infty}{\lim}\,n_k-n_{k-1}=\infty$}. Let $H$ be of the form (\ref{eq_hl_op}). Suppose that for every $n\notin\left\{n_k:k\in\mathbb{N}\right\}$, $V\left(n\right)=0$ and that $V\left(n_k\right)\underset{k\to\infty}{\longrightarrow}\infty$. Then $\sigma_{\text{ess}}\left(H\right)=\left[-2,2\right]$.
	\end{lemma}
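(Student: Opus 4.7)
I would prove $\sigma_{\text{ess}}(H) = [-2,2]$ by establishing both inclusions separately. For $[-2,2] \subseteq \sigma_{\text{ess}}(H)$, the plan is to construct explicit Weyl sequences using sine waves localized in the widening gaps between consecutive $n_k$'s. Given $E = 2\cos\theta \in [-2,2]$, use $n_k - n_{k-1} \to \infty$ to pick, for each $j$, an interval $I_j = [a_j, a_j + L_j - 1] \subset (n_{k_j - 1}, n_{k_j})$ with $L_j \to \infty$ and $a_j \to \infty$. Set $\psi_j(n) = \sqrt{2/L_j}\sin(\theta(n - a_j + 1)) \mathbf{1}_{I_j}(n)$. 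Since $V \equiv 0$ on $I_j$ (so $(H - E)\psi_j = (H_0 - E)\psi_j$) and $\sin(\theta\,\cdot)$ satisfies the free three-term recurrence at interior points of $I_j$, the vector $(H - E)\psi_j$ is supported at only three boundary points with values of order $L_j^{-1/2}$; hence $\|(H - E)\psi_j\|^2 = O(1/L_j) \to 0$. Disjoint supports escaping to infinity give $\psi_j \rightharpoonup 0$ with $\|\psi_j\| \to 1$, so $(\psi_j)$ is a singular Weyl sequence and $E \in \sigma_{\text{ess}}(H)$.

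For the reverse inclusion $\sigma_{\text{ess}}(H) \subseteq [-2,2]$, the plan is to show that any putative Weyl sequence for $\lambda \notin [-2,2]$, once projected onto the complement of $\{n_k\}$, becomes an approximate eigenvector of the free Laplacian on the gap region, forcing $\lambda \in [-2,2]$. Decompose $\ell^2(\mathbb{N}) = \mathcal{H}_s \oplus \mathcal{H}_g$ with $\mathcal{H}_s = \overline{\mathrm{span}}\{\delta_{n_k}\}$; after a finite-rank modification of $V$ (which does not affect $\sigma_{\text{ess}}$), arrange the nontrivial spikes to be separated by at least $3$, so that $H$ takes the clean block form $\begin{pmatrix} D & B \\ B^* & H_g\end{pmatrix}$, where $D = \mathrm{diag}(V(n_k))$ is unbounded, $H_g$ is a direct sum of finite free-Laplacian blocks with $\sigma(H_g) \subseteq [-2,2]$, and $\|B\| \leq \sqrt{2}$. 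Suppose $\psi_m$ is a Weyl sequence for $\lambda$: $\|\psi_m\|=1$, $\psi_m \rightharpoonup 0$, $\|(H-\lambda)\psi_m\|\to 0$. The quadratic-form identity combined with $|\langle H_0 \psi_m,\psi_m\rangle| \leq 2$ and $\langle H\psi_m,\psi_m\rangle \to \lambda$ forces $\sum_k V(n_k)|\psi_m(n_k)|^2$ to stay bounded in $m$; since $V(n_k)\to\infty$ and $\psi_m\to 0$ pointwise, one deduces $\|P_s \psi_m\|^2 = \sum_k |\psi_m(n_k)|^2 \to 0$ and hence $\|P_g \psi_m\| \to 1$.

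Projecting the Weyl equation onto $\mathcal{H}_g$ yields $(H_g - \lambda) P_g \psi_m = P_g(H - \lambda)\psi_m - B^* P_s\psi_m$, whence $\|(H_g - \lambda) P_g \psi_m\| \leq \|(H - \lambda)\psi_m\| + \|B\|\,\|P_s \psi_m\| \to 0$. Combined with $\|P_g \psi_m\| \to 1$, this makes $P_g \psi_m$ an approximate eigenvector of $H_g$ at $\lambda$, so $\lambda \in \sigma(H_g) \subseteq [-2,2]$, contradicting $\lambda \notin [-2,2]$. The main step requiring care is the a priori bound on $\sum_k V(n_k) |\psi_m(n_k)|^2$: after a finite-rank modification one may assume $V \geq 0$, and then the identity $\langle H\psi,\psi\rangle = \langle H_0\psi,\psi\rangle + \sum_k V(n_k)|\psi(n_k)|^2$ together with $\|H_0\| \leq 2$ yields both the needed upper and lower bounds uniformly in $m$, from which the rest of the argument follows cleanly.
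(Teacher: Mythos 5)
Your argument is correct, and the two halves compare differently with the paper. For $[-2,2]\subseteq\sigma_{\text{ess}}(H)$ you and the paper do the same thing, except that the paper compresses it into one sentence ("every Weyl sequence for the Laplacian generates a Weyl sequence for $H$"), which taken literally needs exactly the localization into the widening gaps that you carry out explicitly; the only loose end in your version is $E=\pm2$, where $\sin(\theta\,\cdot)$ degenerates, but closedness of the essential spectrum (or a polynomial solution) disposes of those two points. For the reverse inclusion your route is genuinely different. The paper works directly with an orthonormal (singular Weyl) sequence: for $E\notin[-2,2]$ one has $|E-V(n)|\geq 2+\eta$ at all but finitely many sites, because $V(n)=0$ off the sparse set and $V(n_k)\to\infty$; orthonormality makes the finitely many exceptional coordinates negligible, and then the one-line bound $\|(H-E)\psi_k\|\geq\|(V-E)\psi_k\|-\|\Delta\psi_k\|$ shows no Weyl sequence can exist. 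You instead decompose into spike sites and gap sites, use the form identity $\langle H\psi_m,\psi_m\rangle=\langle\Delta\psi_m,\psi_m\rangle+\sum_k V(n_k)|\psi_m(n_k)|^2$ together with $V(n_k)\to\infty$ and $\psi_m\rightharpoonup 0$ to force $\|P_s\psi_m\|\to0$, and then transfer the approximate eigenvector to $H_g$, a direct sum of finite free blocks with spectrum inside $[-2,2]$. Both are complete; the paper's is shorter and needs no preprocessing, whereas yours extracts more structural information (the mass of any Weyl sequence for $\lambda\notin[-2,2]$ must concentrate on the gap sites), which could be useful beyond this lemma. Note also that your finite-rank normalizations are conveniences rather than necessities: the crude bound $\|B\|\leq\|\Delta\|=2$ already suffices in the projected equation, $H_g=P_g\Delta P_gP_g$-restricted is automatically a direct sum of finite blocks whatever the spike separation, and the finitely many possibly troublesome spikes (close together or with small values) can be absorbed using $\psi_m\rightharpoonup 0$ exactly as you handle the small-$k$ terms.
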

	\begin{proof}
		Denote by $\Delta$ the free Laplacian, namely $\left(\Delta\psi\right)\left(n\right)=\psi\left(n-1\right)+\psi\left(n+1\right)$ and recall that $\sigma\left(\Delta\right)=\sigma_{\text{ess}}\left(\Delta\right)=\left[-2,2\right]$. Clearly, every Weyl sequence for the Laplacian generates a Weyl sequence for $H$ and so $\left[-2,2\right]\subseteq\sigma_{\text{ess}}\left(H\right)$. On the other hand, let $\left(\psi_k\right)_{k=1}^\infty$ be any sequence of orthonormal vectors. Note that as a multiplication operator, $V$ has an orthonormal basis of eigenvectors $\left(\delta_n\right)_{n=1}^\infty$ with isolated eigenvalues $\left(\lambda_n\right)_{n=1}^{\infty}$ such that the only eigenvalue of multiplicity greater than one is $0$. This implies that given $E\notin\left[-2,2\right]$, there exists $N\in\mathbb{N}$ such that for every $n>N$, $\lambda_n\notin\left(E-2,E+2\right)$. Denote $\dist\left(E,\left\{\lambda_n:n>N\right\}\right)=2+\eta$ for some $\eta>0$. In addition, since $\left(\psi_k\right)_{k=1}^\infty$ is an orthonormal sequence, we have that for every $\varepsilon>0$ for large enough $K$, for every $k>K$ and for every $1\leq n\leq N$,
		\begin{equation}
			\left|\psi_k\left(n\right)\right|\leq\frac{\varepsilon}{N}.
		\end{equation}
		Finally we obtain
		\begin{center}
			$\|H\psi_k-E\psi_k\|\geq\left\|\left(V-E\right)\psi_k\right\|-\left\|\Delta\psi_k\right\|=\left\|\sum\limits_{n\leq N}\psi_k\left(N\right)\left(E-\lambda_n\right)\delta_n+\sum\limits_{n>N}\psi_k\left(n\right)\left(E-\lambda_n\right)\delta_n\right\|-\left\|\Delta\psi_k\right\|\geq\left\|\sum\limits_{n>N}\psi_k\left(n\right)\left(E-\lambda_n\right)\delta_n\right\|-\left(\left\|\sum\limits_{n\leq N}\psi_k\left(n\right)\left(E-\lambda_n\right)\delta_n\right\|+\left\|\Delta\psi_k\right\|\right)>\left(2+\frac{\eta}{2}\right)\|\psi_k\|-\left(2-\varepsilon\right)\|\psi_k\|$.
		\end{center}
		Taking $\varepsilon$ to be small enough, we see that $\left(\psi_k\right)_{k=1}^\infty$ cannot be a Weyl sequence for $E$ and so \mbox{$E\notin\sigma_{\text{ess}}\left(H\right)$}, as required.
	\end{proof}
	\begin{remark}
		By standard finite-rank perturbation arguments, Lemma \ref{lemma_ess_spec} implies that for every \mbox{$\theta\in\left[0,\pi\right)$}, $\sigma_{\text{ess}}\left(H_\theta\right)=\left[-2,2\right]$.
	\end{remark}
	\subsubsection*{Transfer matrices}
	Let us introduce the notion of transfer matrices associated with $H$ and some of the spectral theory connected with them. Fix $E\in\mathbb{R}$ and consider $u:\mathbb{N}\cup\left\{0\right\}\to\mathbb{R}$ which satisfies
	\begin{equation}\label{eq_ev_hl}
		u\left(n-1\right)+u\left(n+1\right)+V\left(n\right)u\left(n\right)=Eu\left(n\right),\,\,\,\,\,n\in\mathbb{N}.
	\end{equation}
	For every $n\in\mathbb{N}$, we denote
	\begin{center}
		$T_n\left(E\right)\coloneqq\left(\begin{matrix}
			E-V\left(n\right) & -1\\
			1 & 0
		\end{matrix}\right)$
	\end{center}
	and for $k,m\in\mathbb{N}$ such that $k<m$, we denote
	\begin{center}
		$\Phi_{k,m}\coloneqq T_m\left(E\right)T_{m-1}\left(E\right)\cdots T_{k}\left(E\right)T_{k-1}\left(E\right)$,\\
		$\Phi_{m}\left(E\right)\coloneqq\Phi_{1,m}\left(E\right)$.
	\end{center}
	It is well-known (see, e.g.\ \cite{DF2}) that
	\begin{center}
		$\forall n\in\mathbb{N} ,\,\,\,T_n\left(E\right)\left(\begin{matrix} u\left(n\right)\\ u\left(n-1\right)\end{matrix}\right)=\left(\begin{matrix} u\left(n+1\right)\\u\left(n\right)\end{matrix}\right)$
	\end{center}
	and so for every $k<m\in\mathbb{N}$,
	\begin{center}
		$\Phi_{k,m}\left(E\right)\left(\begin{matrix}u\left(k\right)\\u\left(k-1\right)\end{matrix}\right)=\left(\begin{matrix}u\left(m+1\right)\\u\left(m\right)\end{matrix}\right)$.
	\end{center}
	In particular, for every $n\in\mathbb{N}$,
	\begin{center}
		$\Phi_n\left(E\right)\left(\begin{matrix}u\left(1\right)\\u\left(0\right)\end{matrix}\right)=\left(\begin{matrix}u\left(n+1\right)\\u\left(n\right)\end{matrix}\right)$.
	\end{center}
	\begin{claim}
		For every $k<m\in\mathbb{N}$ and for every $E\in\mathbb{R}$, $\Phi_m,\Phi_{k,m}$ and $T_m$ are all invertible. In addition, for every closed interval $I\subseteq\mathbb{R}$, the functions $\Phi_m,\Phi_{k,m},T_m:I\to M_2\left(\mathbb{R}\right)$ are all continuous.
	\end{claim}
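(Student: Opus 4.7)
The plan is to reduce everything to the single matrix $T_n(E)$ and then leverage the fact that matrix multiplication is a continuous, determinant-multiplicative operation. For invertibility, I would directly compute
\[
\det T_n(E) = (E-V(n))\cdot 0 - (-1)\cdot 1 = 1,
\]
so each $T_n(E)$ lies in $\mathrm{SL}(2,\mathbb{R})$ and is in particular invertible (with explicit inverse $\begin{pmatrix} 0 & 1 \\ -1 & E-V(n) \end{pmatrix}$). Since $\Phi_{k,m}(E)$ and $\Phi_m(E)$ are by definition finite products of matrices of the form $T_j(E)$, the multiplicativity of the determinant gives $\det \Phi_{k,m}(E) = \det \Phi_m(E) = 1$, so both are invertible.

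For continuity on a closed interval $I \subseteq \mathbb{R}$, the entries of $T_n(E)$ are the polynomials $E \mapsto E-V(n)$, $E \mapsto -1$, $E \mapsto 1$, $E \mapsto 0$, all of which are continuous functions of $E$. Hence $T_n : I \to M_2(\mathbb{R})$ is continuous in the usual operator-norm (equivalently, entrywise) topology. Matrix multiplication $M_2(\mathbb{R})\times M_2(\mathbb{R}) \to M_2(\mathbb{R})$ is bilinear and hence continuous, so an induction on the number of factors shows that any finite product of continuous matrix-valued functions is continuous; applying this to $\Phi_{k,m} = T_m\cdots T_{k-1}$ and $\Phi_m = \Phi_{1,m}$ completes the argument.

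There is no real obstacle here: the claim is a bookkeeping statement gathering two elementary facts (the explicit determinant and continuity of polynomials plus matrix multiplication). The only thing worth being careful about is keeping the index convention of the definition of $\Phi_{k,m}$ straight, so that the induction genuinely matches the stated product; beyond that, the proof is routine and could even be left to the reader.
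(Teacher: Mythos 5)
Your proof is correct and matches the paper's approach: the paper simply declares the claim immediate from the definitions, and your computation of $\det T_n(E)=1$ together with continuity of polynomial entries and matrix multiplication is exactly the routine verification being left implicit. Nothing further is needed.
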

	
	\begin{proof}
		This is immediate from the definitions of $\Phi_m\left(E\right),\Phi_{k,m}\left(E\right)$ and $T_m\left(E\right)$.
	\end{proof}
	For every $E\in\mathbb{R}$, let $\overline{\gamma}\left(E\right)$ be the upper Lyapunov exponent:
	\begin{center}
		$\overline{\gamma}\left(E\right)=\underset{n\to\infty}{\limsup}\frac{1}{n}\ln\|\Phi_n\left(E\right)\|$.
	\end{center}
	We will need the following theorem from \cite{JL1}:
	\begin{proposition}\label{JLTHM}
		Suppose that $\overline{\gamma}\left(E\right)>0$ for every $E$ in some Borel set $A$. Then for every $\theta\in\left[0,\pi\right)$, the restriction $\mu_\theta\left(A\cap\cdot\right)$ is zero-dimensional.
	\end{proposition}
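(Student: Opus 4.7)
The plan is to invoke the Jitomirskaya--Last framework of \cite{JL1}, which ties the local dimensional behavior of $\mu_\theta$ at $E$ to the asymptotics of the transfer matrices $\Phi_n(E)$, together with Gilbert--Pearson subordinacy theory.

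By Proposition \ref{prop_hp_decomposition}(1), to conclude that $\mu_\theta(A \cap \cdot)$ has Hausdorff dimension zero it suffices to show that this restriction is $\alpha$-Hausdorff singular for every $\alpha \in (0,1)$, i.e.\ supported on $T_\infty^\alpha = \{E : \overline{D}_{\mu_\theta}^\alpha(E) = \infty\}$; intersecting the corresponding support sets along a sequence $\alpha_j = 1/j$ produces a single Borel set of Hausdorff dimension zero carrying all the mass. By Proposition \ref{DJLS_thm}, membership in $T_\infty^\alpha$ is in turn equivalent to
\[
\limsup_{\varepsilon \to 0^+} \varepsilon^{1-\alpha} \im m_\theta(E+i\varepsilon) = \infty,
\]
so the task reduces to establishing this divergence at $\mu_\theta(A\cap\cdot)$-almost every $E$ and every $\alpha \in (0,1)$.

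The main input is the JL two-sided comparison between $m_\theta(E+i\varepsilon)$ and truncated $\ell^2$-norms of two linearly independent solutions $u_\theta, v_\theta$ of \eqref{eq_ev_hl} at $E$ (with $u_\theta$ encoding the boundary condition corresponding to $\theta$), at a scale $L(\varepsilon)$ implicitly defined by $\|u_\theta\|_{L}\,\|v_\theta\|_{L} \asymp 1/\varepsilon$. The hypothesis $\overline{\gamma}(E) > 0$ for $E \in A$ yields a subsequence $n_k \to \infty$ with $\|\Phi_{n_k}(E)\| \geq e^{\eta(E) n_k}$, and since each $T_n$ is unimodular, the same lower bound holds for the inverses. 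Gilbert--Pearson subordinacy, as refined in \cite{JL1}, asserts that $\mu_\theta$ is supported on the set of energies at which a subordinate solution exists; at $\mu_\theta(A\cap\cdot)$-a.e.\ such $E$ this subordinate solution must align with the contracted direction of the transfer-matrix dynamics and therefore decays exponentially along $n_k$, while the transversal solution grows exponentially. Feeding these exponential asymptotics into the JL comparison, evaluated at the scales $\varepsilon_k$ for which $L(\varepsilon_k) = n_k$, forces $\varepsilon_k^{1-\alpha} \im m_\theta(E+i\varepsilon_k) \to \infty$ for every $\alpha \in (0,1)$, as required.

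The main obstacle is the passage from the pointwise hypothesis ``$\overline{\gamma}(E) > 0$ on $A$'' to a genuine exponential dichotomy at $\mu_\theta(A\cap\cdot)$-a.e.\ energy: a mere $\limsup$ bound on transfer-matrix norms does not by itself single out an exponentially decaying direction, nor does it rule out oscillatory cancellations between scales. This is precisely where Gilbert--Pearson subordinacy becomes indispensable, since it guarantees a distinguished (subordinate) solution $\mu_\theta$-a.e.; unimodularity of $\Phi_n$ then forces that solution to decay at least as fast as $\|\Phi_{n_k}(E)\|^{-1}$, closing the loop. Once this dichotomy is secured, the rest is a direct application of the JL two-sided bound, and Proposition \ref{prop_hp_decomposition}(1) delivers the zero-dimensional conclusion.
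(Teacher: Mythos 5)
The paper does not actually prove this proposition --- it is imported verbatim from \cite{JL1} --- so your outline should be measured against the standard Jitomirskaya--Last argument, and its skeleton is right: reduce via Propositions \ref{prop_hp_decomposition}(1) and \ref{DJLS_thm} to showing $\limsup_{\varepsilon\to0}\varepsilon^{1-\alpha}\left|m_\theta\left(E+i\varepsilon\right)\right|=\infty$ for $\mu_\theta$-a.e.\ $E\in A$ and every $\alpha\in\left(0,1\right)$, then feed transfer-matrix growth along a subsequence $n_k$ into Proposition \ref{prop_JL} at the scales $\varepsilon_k$ with $L\left(\varepsilon_k\right)=n_k$. The gap is the middle step, where you bound $\|u_\theta\|_L$ by invoking subordinacy. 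First, Gilbert--Pearson/JL subordinacy only says the \emph{singular} part of $\mu_\theta$ is supported where a subordinate solution exists, so as written your argument says nothing about a possible a.c.\ component of $\mu_\theta$ on $A$, and a mere $\limsup$ hypothesis on $\|\Phi_n\left(E\right)\|$ does not dispose of it for free (the Last--Simon a.c.\ support criterion involves a $\liminf$ of averaged norms). Second, and more seriously, subordinacy carries no rate: the claim that unimodularity forces the subordinate solution to decay like $\|\Phi_{n_k}\left(E\right)\|^{-1}$ is false in general. The contracted singular direction of $\Phi_{n_k}\left(E\right)$ may rotate with $k$, under a $\limsup$ hypothesis no exponentially decaying solution need exist at all, and for sparse potentials --- precisely the examples built in this paper --- the truncated norms $\|u_\theta\|_L$ of the subordinate solution still grow, only more slowly than $\|v_\theta\|_L$. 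Quantitatively, since $1/\varepsilon_k=\omega\left(L\right)\leq 2\|u_\theta\|_L\|v_\theta\|_L$ at $L=L\left(\varepsilon_k\right)$, Proposition \ref{prop_JL} gives $\varepsilon_k^{1-\alpha}\left|m_\theta\left(E+i\varepsilon_k\right)\right|\geq c\,\|v_\theta\|_L^{\alpha}/\|u_\theta\|_L^{2-\alpha}$, and knowing only $\|u_\theta\|_L/\|v_\theta\|_L\to0$ together with $\max\left(\|u_\theta\|_{n_k+1},\|v_\theta\|_{n_k+1}\right)\geq ce^{\eta n_k}$ does not make this diverge: if, say, $\|u_\theta\|_{n_k}$ were of order $e^{\eta n_k}/\ln n_k$, the right-hand side would tend to zero for every $\alpha<1$.

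The missing ingredient, which also repairs the a.c.\ issue, is Proposition \ref{prop_gef_bd} (Last--Simon): for every $\theta$ and $\mu_\theta$-almost every $E$ one has $\|u_\theta\|_L\leq C\left(E\right)L^{1/2}\ln L$, and this holds for the full measure, not just its singular part. For such $E\in A$, since the initial conditions of $u_\theta,v_\theta$ form an orthonormal basis, $\|\Phi_{n_k}\left(E\right)\|\geq e^{\eta n_k}$ forces $\|v_\theta\|_{n_k+1}\geq\frac{1}{2}e^{\eta n_k}$ for large $k$, whence $\|v_\theta\|_L^{\alpha}/\|u_\theta\|_L^{2-\alpha}\to\infty$ along $L=n_k+1$ and the divergence of $\varepsilon^{1-\alpha}\left|m_\theta\right|$ follows for every $\alpha\in\left(0,1\right)$; Propositions \ref{DJLS_thm} and \ref{prop_hp_decomposition}(1) then conclude exactly as you proposed. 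With this substitution your outline becomes the standard JL proof; without it, the step ``the subordinate solution decays exponentially along $n_k$'' is where the argument fails.
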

	\subsubsection*{Subordinacy theory}
	Fix $E\in\mathbb{R}$ and let $\text{Sol}\left(E\right)$ be the set of all $u:\mathbb{N}\cup\left\{0\right\}\to\mathbb{R}$ which satisfy (\ref{eq_ev_hl}) for all $n\geq 1$, and in addition
	\begin{equation}\label{eq_norm_sol_hl}
		\left|u\left(0\right)\right|^2+\left|u\left(1\right)\right|^2=1.
	\end{equation}
	For every $\theta\in\left[0,\pi\right)$, we denote by $u_\theta,v_\theta$ the unique $u,v\in\text{Sol}\left(E\right)$ which satisfy
	\begin{center}
		$\left(\begin{matrix}
			u_\theta\left(1\right) & v_\theta\left(1\right)\\u_\theta\left(0\right) & v_\theta\left(0\right)
		\end{matrix}\right)=\left(\begin{matrix}
		\cos\theta & \sin\theta\\-\sin\theta & \cos\theta
	\end{matrix}\right)$.
	\end{center}
	We will use the well-known fact that the Wronskian is constant (see e.g.\ \cite{JL1}).
	\begin{equation}\label{eq_const_wron}
		u_\theta\left(n+1\right)v_\theta\left(n\right)-u_\theta\left(n\right)v_\theta\left(n+1\right)=1,\,\,\,\,\,\,\,\,n\in\mathbb{N}.
	\end{equation}
	For every $u:\mathbb{N}\to\mathbb{C}$ and $L>0$, we define
	\begin{center}
		$\|u\|_L=\left(\sum\limits_{k=1}^{\floor{L}}\left|u\left(k\right)\right|^2+\left(L-\floor{L}\right)\left|u\left(\floor{L}+1\right)\right|^2\right)^{\frac{1}{2}}$.
	\end{center}
	Fix $\theta\in\left[0,\pi\right)$. For every $L>0$, we let
	\begin{equation}\label{eq_a_b_omega_def}
		\begin{aligned}
			a\left(L\right) & = \|v_{\theta}\|_L,\\
			b\left(L\right) & = \|u_\theta\|_L,\\
			\omega\left(L\right) & = \left(\underset{\eta\in\left[0,\pi\right)}{\max}\,\|u_\eta\|_L\right)\cdot\left(\underset{\eta\in\left[0,\pi\right)}{\min}\,\|u_\eta\|_L\right).
		\end{aligned}
	\end{equation}
	\begin{definition}
		Given $E\in\mathbb{R}$ and $\theta\in\left[0,\pi\right)$, we say that $u_\theta$ is \textit{subordinate} if
		\begin{center}
			$\underset{L\to\infty}{\lim}\frac{\|u_\theta\|_L}{\|v_\theta\|_L}=0$.
		\end{center}
	\end{definition}
	It is not hard to see that $\omega\left(L\right)$ is monotone increasing and goes to infinity as $L\to\infty$. Thus, for every $\varepsilon>0$ there exists a unique $L\left(\varepsilon\right)$ such that $\omega\left(L\left(\varepsilon\right)\right)=\frac{1}{\varepsilon}$. The function $\varepsilon\to L\left(\varepsilon\right)$ is monotone decreasing and goes to $\infty$ as $\varepsilon\to 0$. We will use the following
	\begin{proposition}\emph{\cite[Theorem 2.3]{KKL}}\label{prop_JL}
		For every $\theta\in\left[0,\pi\right)$, $E\in\mathbb{R}$ and $\varepsilon>0$,
		\begin{equation}\label{JL_ineq}
			\frac{2-\sqrt{3}}{\left|m_{\mu_\theta}\left(E+i\varepsilon\right)\right|}<\frac	{\left\|u_\theta\right\|_{L\left(\varepsilon\right)}}{\left\|v_\theta\right\|_{L\left(\varepsilon\right)}}<\frac{2+\sqrt{3}}{\left|m_{\mu_\theta}\left(E+i\varepsilon\right)\right|}.
		\end{equation}
	\end{proposition}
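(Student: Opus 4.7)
The plan is to follow the Jitomirskaya–Last strategy of comparing the Borel transform $m_{\mu_\theta}(z)$ at the complex energy $z = E+i\varepsilon$ to real-energy solution norms on a carefully chosen finite interval whose length balances the scale set by $\varepsilon$. Concretely, I would start from the Weyl $\ell^2$-solution $\phi_\theta(\cdot,z)$ of $(H_\theta - z)\phi = 0$ satisfying the $\theta$-boundary condition at $n=0$. From the Green's function representation of $(H_\theta - z)^{-1}$, standard identities give both an expression of $m_{\mu_\theta}(z)$ as a simple ratio of values of $\phi_\theta(\cdot,z)$ at $n=0$ and $n=1$, and the Weyl identity
\begin{equation}
\sum_{n=1}^\infty |\phi_\theta(n,z)|^2 = \frac{\operatorname{Im} m_{\mu_\theta}(z)}{\varepsilon}.
\end{equation}
Combining these yields a relation between $|m_{\mu_\theta}(z)|$, $\operatorname{Im} m_{\mu_\theta}(z)$, and the truncated norms $\|\phi_\theta(\cdot,z)\|_L$.

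The second step is to transfer the estimate from complex to real energies. Using the transfer-matrix framework of Section~\ref{section_hl_ops}, the difference between $T_n(E+i\varepsilon)$ and $T_n(E)$ is $O(\varepsilon)$, so a Duhamel/variation-of-parameters expansion gives a bound of the form
\begin{equation}
\phi_\theta(n,z) = \alpha(z)\,v_\theta(n,E) + \beta(z)\,u_\theta(n,E) + R_n(z,\varepsilon),
\end{equation}
with a remainder controlled by $\varepsilon$ times an integral (sum) of $\|\phi_\theta(\cdot,z)\|$ over $[1,n]$. The role of the length scale $L(\varepsilon)$ defined by $\omega(L(\varepsilon))=1/\varepsilon$ is exactly to be the largest $L$ on which this remainder is dominated by the leading two terms; this is what makes $L(\varepsilon)$ canonical, and it is the reason $\omega$ rather than $\|u_\theta\|_L$ or $\|v_\theta\|_L$ alone enters the definition. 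On this interval one estimates $\|\phi_\theta(\cdot,z)\|_{L(\varepsilon)}$ from above and below in terms of $\|u_\theta\|_{L(\varepsilon)}$ and $\|v_\theta\|_{L(\varepsilon)}$ and, via the $m$-function formula from Step~1, in terms of $|m_{\mu_\theta}(z)|$.

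Feeding those bounds back into the Weyl identity yields a quadratic-type inequality for the ratio $r := \|u_\theta\|_{L(\varepsilon)}/(\|v_\theta\|_{L(\varepsilon)} |m_{\mu_\theta}(z)|)$, of the form $r^2 - 4 r + 1 \lessgtr 0$ (up to the cross-term contribution). Solving this quadratic produces the roots $2\pm\sqrt{3}$, whence the two-sided bound (\ref{JL_ineq}). The main obstacle is uniform control over $[1,L(\varepsilon)]$ of two nuisance quantities: the cross term $\langle u_\theta, v_\theta\rangle_{L(\varepsilon)}$, which is non-zero at finite $L$ and must be absorbed using the Wronskian identity (\ref{eq_const_wron}) and the AM–GM-type bound built into the definition of $\omega(L)$; and the Duhamel remainder in the complex-to-real solution comparison, which requires that the choice $\omega(L(\varepsilon))=1/\varepsilon$ really is sharp enough for a Gronwall iteration to close. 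Once these two points are handled, the rest is algebraic rearrangement to extract the explicit constants.
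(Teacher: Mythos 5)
A point of orientation first: the paper does not prove Proposition \ref{prop_JL} at all --- it is imported verbatim from \cite[Theorem 2.3]{KKL}, and the remark following it records that the original version in \cite{JL1} has different constants ($5\pm\sqrt{24}$) and a different length scale (defined by $\|u_\theta\|_{L(\varepsilon)}\|v_\theta\|_{L(\varepsilon)}=\frac{1}{2\varepsilon}$). Your outline is a faithful road map of the Jitomirskaya--Last strategy underlying both versions: represent $m_{\mu_\theta}$ through the Weyl solution at $E+i\varepsilon$, use $\varepsilon\sum_n|\phi_\theta(n,z)|^2=\operatorname{Im} m_{\mu_\theta}(z)$, compare $\phi_\theta(\cdot,z)$ with the real-energy pair $u_\theta,v_\theta$ by variation of parameters, and stop at the length scale where the perturbative term becomes of order one.

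As a proof of the stated inequality, though, there are genuine gaps, and they sit exactly where the content of the theorem lies. First, the final algebra is off: the dimensionless quantity that ends up pinched between the reciprocal roots $2\pm\sqrt{3}$ is $\frac{\|u_\theta\|_{L(\varepsilon)}}{\|v_\theta\|_{L(\varepsilon)}}\,\bigl|m_{\mu_\theta}(E+i\varepsilon)\bigr|$, whereas your $r$ carries $|m_{\mu_\theta}|$ in the denominator; bounding your $r$ between $2\pm\sqrt{3}$ would give $\|u_\theta\|_{L}/\|v_\theta\|_{L}\asymp|m_{\mu_\theta}|$, which is the opposite of (\ref{JL_ineq}) and of the subordinacy heuristic (large $|m_{\mu_\theta}|$ forces $u_\theta$ to be the small solution). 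Second, the quadratic $x^2-4x+1<0$, i.e.\ the constants $2\pm\sqrt{3}$, is asserted ``up to the cross-term contribution,'' but absorbing $\langle u_\theta,v_\theta\rangle_{L}$ and closing the Gronwall-type estimate at the scale $\omega(L(\varepsilon))=1/\varepsilon$ is precisely the work carried out in \cite{KKL}. Moreover, the role of $\omega$ is not mainly remainder control: since $\eta\mapsto\|u_\eta\|_L^2$ is a quadratic form in the initial data, one has $\omega(L)^2=\|u_\theta\|_L^2\|v_\theta\|_L^2-\langle u_\theta,v_\theta\rangle_L^2$ for every $\theta$, so $\omega$ is the $\theta$-independent (determinant) normalization, and exploiting this is what improves the Jitomirskaya--Last constants $5\pm\sqrt{24}$ (obtained at their $\theta$-dependent scale) to $2\pm\sqrt{3}$ at the scale used here. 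So your plan reconstructs the right argument in outline, but the quantitative core --- the part that distinguishes the precise inequality (\ref{JL_ineq}) from a soft two-sided comparability statement --- is missing, and as written its last step does not deliver the inequality in the stated direction.
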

	\begin{remark}
		\begin{enumerate}
			\item Proposition \ref{prop_JL} was originally proved in \cite{JL1} with different constant and with $L\left(\varepsilon\right)$ defined by requiring $\|u_\theta\|_{L\left(\varepsilon\right)}\cdot\|v_\theta\|_{L\left(\varepsilon\right)}=\frac{1}{2\varepsilon}$.
			\item Proposition \ref{prop_JL} implies the original version of subordinacy theory, proved by Gilbert and Pearson \cite{GP} in the continuum case and by Khan and Pearson \cite{KP} in the discrete case, which says that the singular part of $\mu_\theta$ is supported on the set of $E\in\mathbb{R}$ for which $u_\theta$ is subordinate.
		\end{enumerate}
	\end{remark}
	The following is proved in \cite{DT} (see also \cite{KKL}).
	\begin{proposition}\label{DT_prop}
		For every $\varepsilon>0$ and for every $L>0$,
		\begin{equation}
			\im m_\theta\left(E+i\varepsilon\right)\geq\frac{\varepsilon\omega^2\left(L\right)}{b\left(L\right)\left(1+\varepsilon\omega\left(L\right)\right)^2}.
		\end{equation}
		In particular, for $L=L\left(\varepsilon\right)$,
		\begin{equation}
			\im m_\theta\left(E+i\varepsilon\right)\geq\frac{1}{4\varepsilon b\left(L\right)}.
		\end{equation}
	\end{proposition}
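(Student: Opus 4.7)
The plan is to derive the bound from the variational (dual) characterization of $\im m_\theta(z)$ combined with a carefully chosen trial function built from truncated real-energy solutions on $[1,L]$.

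The starting point is the spectral-theoretic identity
$$\im m_\theta(E+i\varepsilon)=\varepsilon\bigl\|(H_\theta-E-i\varepsilon)^{-1}\delta_1\bigr\|^2,$$
which follows from the Poisson representation of $m_\theta$ and the spectral theorem. Writing $\bar z=E-i\varepsilon$ and dualizing, one obtains for every $\psi\in D(H_\theta)$
$$\im m_\theta(E+i\varepsilon)\geq\frac{\varepsilon\,|\psi(1)|^2}{\|(H_\theta-\bar z)\psi\|^2}.$$
It therefore suffices to exhibit a trial function $\psi$ for which the right-hand side has the claimed order of magnitude.

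The natural first try is $\psi=u_\theta\chi_{[1,L]}$ (with the fractional endpoint handled in analogy with the definition of $\|\cdot\|_L$). Since $u_\theta$ satisfies the $\theta$-boundary condition at $n=1$, a direct computation shows that $(H_\theta-E)\psi$ is supported on $\{L,L+1\}$ with values $-u_\theta(L+1)$ and $u_\theta(L)$. Because the real piece $(H_\theta-E)\psi$ is orthogonal to the purely imaginary piece $i\varepsilon\psi$ in the complexification, one obtains
$$\|(H_\theta-\bar z)\psi\|^2=u_\theta(L)^2+u_\theta(L+1)^2+\varepsilon^2\|u_\theta\|_L^2,$$
which already yields a variational lower bound of the correct general shape --- but with $\cos^2\theta$ in the numerator, rather than the desired $\omega^2(L)/b(L)$.

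The main obstacle is thus upgrading the ansatz so as to bring $\omega(L)$ into the bound. The plan is to work with the two-parameter family $\psi=(\alpha u_\theta+\beta v_\theta)\chi_{[1,L]}$ and optimize over $(\alpha,\beta)$. The $v_\theta$ component contributes an additional error supported at $\delta_1$ of size $\beta\sec\theta$ (because $v_\theta$ fails the $\theta$-boundary condition), together with endpoint terms at $\{L,L+1\}$ given by the transfer matrix $\Phi_L$ acting on $(\alpha,\beta)$. The crucial identity is that $\omega^2(L)=\det G_L$, where $G_L$ is the Gram matrix of $u_0,v_0$ restricted to $[1,L]$ (rotationally invariant in the reference boundary angle); this turns the optimization of the numerator-to-denominator ratio into an explicit two-dimensional algebraic problem whose optimum yields the factor $\omega^2(L)/b(L)$. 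Balancing the endpoint error against the bulk contribution $\varepsilon^2\|\psi\|^2$ is what produces the $(1+\varepsilon\omega(L))^{-2}$ factor. Finally, the \emph{in particular} claim is immediate from the general bound by specializing to $L=L(\varepsilon)$, where by definition $\varepsilon\omega(L)=1$ and hence $(1+\varepsilon\omega(L))^2=4$.
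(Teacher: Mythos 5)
You should note first that the paper itself does not prove Proposition \ref{DT_prop}; it is quoted from \cite{DT} (see also \cite{KKL}), so your attempt has to be measured against that argument. Your opening steps are correct: the identity $\im m_\theta\left(E+i\varepsilon\right)=\varepsilon\|\left(H_\theta-E-i\varepsilon\right)^{-1}\delta_1\|^2$, the dual bound $\im m_\theta\left(E+i\varepsilon\right)\geq\varepsilon\left|\psi\left(1\right)\right|^2/\|\left(H_\theta-\bar z\right)\psi\|^2$, the computation of $\left(H_\theta-E\right)\psi$ for $\psi=u_\theta\chi_{[1,L]}$, and the identity $\omega^2\left(L\right)=\det G_L$. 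The gap is the unproved central claim that optimizing over $\psi=\left(\alpha u_\theta+\beta v_\theta\right)\chi_{[1,L]}$ ``yields the factor $\omega^2\left(L\right)/b\left(L\right)$''. It cannot: for sharply truncated real solutions the endpoint terms $w\left(L\right)^2+w\left(L+1\right)^2$ in the denominator are in general at least as large as the numerator $w\left(1\right)^2$, and no choice of $\left(\alpha,\beta\right)$ helps. Concretely, take $V\equiv0$, $E=0$, $\theta=0$: every solution has $\|u_\eta\|_L^2\approx L/2$, so $\omega\left(L\right)\approx L/2$, $L\left(\varepsilon\right)\approx 2/\varepsilon$, $b\left(L\left(\varepsilon\right)\right)^2\approx 1/\varepsilon$. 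The right-hand side of the second display is then $\tfrac{1}{4}\varepsilon^{-1/2}$, which exceeds the true value $\im m_\theta\left(i\varepsilon\right)\to 1$; so the printed power of $b\left(L\right)$ must be $2$, as in \cite{DT} (a typo that is secondary here). But even the corrected target $\tfrac{1}{4\varepsilon b^2\left(L\left(\varepsilon\right)\right)}\approx\tfrac14$ is out of reach of your trial family: in this example $w\left(L\right)^2+w\left(L+1\right)^2=\alpha^2+\beta^2\geq w\left(1\right)^2$ for every $\left(\alpha,\beta\right)$, so your variational quotient is at most $1$ and the resulting lower bound never exceeds $\varepsilon$, off by a factor of order $1/\varepsilon$ from what the proposition demands.

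The ingredient your plan is missing, and which drives the proof in \cite{DT,KKL}, is the complex-energy Weyl solution rather than truncations of real-energy solutions. One uses $\im m_\theta\left(E+i\varepsilon\right)=\varepsilon\sum_{n\geq1}\left|v_z\left(n\right)+m_\theta\left(z\right)u_z\left(n\right)\right|^2$ with $z=E+i\varepsilon$, restricts the sum to $n\leq L$, compares the complex-energy solutions $u_z,v_z$ with the real-energy ones $u_\theta,v_\theta$ via the variation-of-parameters (Gronwall) estimates underlying Proposition \ref{prop_JL} --- this comparison is exactly where the factor $\left(1+\varepsilon\omega\left(L\right)\right)^{-2}$ is lost --- and then bounds $\|v_\theta+m u_\theta\|_L^2\geq\omega^2\left(L\right)/\|u_\theta\|_L^2$ by the Gram-determinant (Wronskian) invariance you mention. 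Your identity $\omega^2\left(L\right)=\det G_L$ is indeed the right tool for that last step, but in your scheme it never enters, because the denominator of the variational quotient is governed by the boundary values $w\left(L\right),w\left(L+1\right)$ and by $\beta^2\sec^2\theta$, none of which are controlled by Gram quantities; a sharp cutoff cannot be repaired by optimizing over two parameters (one needs trial vectors decaying on scale $L\left(\varepsilon\right)$, e.g.\ the resolvent vector itself, which is what the quoted argument supplies). Only your final remark --- that $L=L\left(\varepsilon\right)$ gives $\left(1+\varepsilon\omega\left(L\right)\right)^2=4$ --- is complete, and it is the trivial part of the proposition.
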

	We will also need
	\begin{proposition}\emph{\cite[Theorem 3.10]{LS}}\label{prop_gef_bd}
		For every $\theta\in\left[0,\pi\right)$, for $\mu_\theta$-almost every $E\in\mathbb{R}$, the solution $u_\theta$ satisfies
		\begin{equation}\label{eq_gen_ef_hl}
			\|u_\theta\|_L\leq C\left(E\right)L^{\frac{1}{2}}\ln L.
		\end{equation}
	\end{proposition}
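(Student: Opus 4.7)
The plan is to combine the spectral theorem, Chebyshev's inequality at dyadic scales, and the Borel--Cantelli lemma. Since $\delta_1$ is cyclic for $H_\theta$, the spectral theorem produces a unitary $U_\theta\colon \ell^2(\mathbb{N})\to L^2(\mathbb{R},d\mu_\theta)$ with $U_\theta\delta_1 = 1$, and the images $P_n^\theta := U_\theta\delta_n$ form an orthonormal family of polynomials satisfying the Jacobi recursion of $H_\theta$. A direct comparison of initial conditions shows that, for $\theta\neq \pi/2$, $u_\theta(n,E) = \cos\theta\cdot P_n^\theta(E)$: both solve the same second-order recursion for $n\geq 2$, and at $n=1,2$ one checks $P_2^\theta = E - V(1) + \tan\theta = u_\theta(2)/\cos\theta$ and $P_1^\theta = 1 = u_\theta(1)/\cos\theta$. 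The case $\theta=\pi/2$ is handled via the shift defining $\widetilde{H}$. Orthonormality of $(P_n^\theta)$ then yields the key second-moment identity
\begin{equation}
\int \|u_\theta(\cdot,E)\|_L^2\, d\mu_\theta(E) \leq L.
\end{equation}

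Next, I would apply Chebyshev at $L = 2^N$: for every $N\in\mathbb{N}$,
\begin{equation}
\mu_\theta\bigl(\{E : \|u_\theta\|_{2^N} > 2^{N/2} N\}\bigr) \leq \frac{1}{N^2}.
\end{equation}
Since $\sum_N N^{-2} < \infty$ and $\mu_\theta$ is finite, Borel--Cantelli gives that for $\mu_\theta$-a.e.\ $E$ there exists $N_0(E)$ with $\|u_\theta(\cdot,E)\|_{2^N} \leq 2^{N/2} N$ for all $N \geq N_0(E)$. Interpolation closes the argument: given $L$ large, pick $N$ with $2^N \leq L < 2^{N+1}$; monotonicity of $L\mapsto\|\cdot\|_L$ yields
\begin{equation}
\|u_\theta\|_L \leq \|u_\theta\|_{2^{N+1}} \leq 2^{(N+1)/2}(N+1) \leq C(E)\, L^{1/2}\ln L,
\end{equation}
and the finite range $L < 2^{N_0(E)}$ is absorbed into $C(E)$.

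The genuinely delicate step is the first one: identifying $u_\theta(n,E)$, which solves the \emph{unperturbed} recursion with the prescribed initial data $(u_\theta(0),u_\theta(1)) = (-\sin\theta,\cos\theta)$, with the orthonormal basis of $L^2(d\mu_\theta)$ given by the polynomials $P_n^\theta$, which instead satisfy the Jacobi recursion with the \emph{perturbed} diagonal entry $V(1) - \tan\theta$ at $n=1$. Pinning down the constant $\cos\theta$ (and the analogous shift for $\theta = \pi/2$) is where any non-trivial work happens; once this is in place, the remainder is a standard Chebyshev plus Borel--Cantelli estimate, and the logarithmic factor in $L^{1/2}\ln L$ arises precisely from the choice of the summable threshold $1/N^2$ in the Chebyshev step.
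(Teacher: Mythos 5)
Your proof is correct. Note that the paper does not actually prove this proposition --- it is quoted from Last--Simon \cite{LS} --- so the comparison is with the cited argument, and what you write is essentially that standard proof: the identification of $u_\theta$ with the orthonormal polynomials of $\mu_\theta$, the second-moment bound $\int\|u_\theta(\cdot,E)\|_{2^N}^2\,d\mu_\theta(E)\leq 2^N$, Chebyshev at dyadic scales, Borel--Cantelli, and interpolation between consecutive dyadic scales. The one step you rightly flag as delicate does go through: the boundary condition built into $u_\theta$, namely $u_\theta(0)=-\sin\theta=-\tan\theta\,u_\theta(1)$, converts the $n=1$ equation $u(0)+u(2)+V(1)u(1)=Eu(1)$ into $u(2)+\left(V(1)-\tan\theta\right)u(1)=Eu(1)$, which is exactly the first row of $H_\theta$; since $P_1^\theta=1$, $P_2^\theta=E-V(1)+\tan\theta$ and $u_\theta(1)=\cos\theta$, $u_\theta(2)=\cos\theta\left(E-V(1)+\tan\theta\right)$, induction on the common recursion for $n\geq 2$ gives $u_\theta(n)=\cos\theta\,P_n^\theta(E)$ for all $n\geq 1$, and $\left|\cos\theta\right|\leq 1$ only helps in the moment bound. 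The $\theta=\pi/2$ case works as you say, with $u_{\pi/2}(n)=\widetilde{P}_{n-1}(E)$ after the index shift. Two cosmetic points: $\mu_\theta$ is a probability measure (spectral measure of the unit vector $\delta_1$), so Chebyshev and Borel--Cantelli apply directly, and at integer $L=2^N$ the fractional term in $\|\cdot\|_L$ vanishes, so no extra constant is even needed in the moment identity; for general $L$ your monotonicity step $\|u_\theta\|_L\leq\|u_\theta\|_{2^{N+1}}$ handles it.
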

	\section{Certain examples of half-line operators}
    In this section, we prove Theorem \ref{main_thm_1} and construct certain half-line operators which will be used in the proof of Theorems \ref{main_thm_2} and \ref{main_thm_3}
    \subsection{Operators with packing-dimension zero for every rank-one perturbation}
	In this subsection we prove Theorem \ref{main_thm_1}. Namely, we will define a potential $V:\mathbb{N}\to\mathbb{R}$ such that the resulting family of Schr\"{o}dinger operators $\left(H_\theta\right)_{\theta\in\left[0,\pi\right)}$ satisfies the following property: For every $\theta\in\mathbb{R}$, $\dim_P\left(\mu_\theta\right)=0$.
	
	We begin the construction by setting $V\left(n\right)=0$ for every $n\notin\left\{k^2:k\in\mathbb{N}\right\}$. We will define $V\left(k^2\right)$ inductively. Fix $k\in\mathbb{N}$ and suppose that $V\left(1\right),\ldots,V\left(k^2-1\right)$ are all defined. Denote $n=k^2$, and let
	\begin{center}
		$C_n\coloneqq\underset{E\in\left[-2,2\right]}{\min}\frac{1}{\|\Phi_{n-1}\left(E\right)\|}>0$.
	\end{center}
	\begin{lemma}
		There exists $M>0$ such that if $V\left(n\right)>M$, then for every $E\in\left[-2,2\right]$,
		\begin{equation}\label{eq_op_norm_Tn}
			\|T_n\left(E\right)\|\cdot C_n>\left(k+1\right)^{k+1}
		\end{equation}
	\end{lemma}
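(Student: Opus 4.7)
The plan is to bound $\|T_n(E)\|$ from below by a linear function of $V(n)$, which will be easy since $V(n)$ appears as the $(1,1)$-entry of $T_n(E)$. Specifically, for any matrix $A$, the operator norm dominates the absolute value of each entry, so
\[
\|T_n(E)\| \geq |E - V(n)|.
\]
For $E \in [-2,2]$ and $V(n) \geq 0$, we have $|E - V(n)| \geq V(n) - 2$ provided $V(n) \geq 2$.

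Now observe that $C_n$ is a strictly positive number that depends only on the already-defined potential values $V(1), \ldots, V(n-1)$ and on the compact set $[-2,2]$; by the continuity of $E \mapsto \Phi_{n-1}(E)$ on $[-2,2]$, the minimum defining $C_n$ is attained and positive. In particular, $C_n$ is a fixed (possibly very small) positive constant at this stage of the induction.

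Therefore, to arrange the inequality
\[
\|T_n(E)\| \cdot C_n > (k+1)^{k+1} \quad \text{for all } E \in [-2,2],
\]
it suffices to guarantee $(V(n) - 2) \cdot C_n > (k+1)^{k+1}$, which holds as soon as
\[
V(n) > 2 + \frac{(k+1)^{k+1}}{C_n}.
\]
Thus one may take $M \coloneqq 2 + \frac{(k+1)^{k+1}}{C_n}$, and the lemma follows. There is no serious obstacle here; the only point to emphasize is that $C_n$, while depending on $k$, is a fixed positive quantity at the induction step where $V(n)$ is chosen, so $M$ is well-defined and finite.
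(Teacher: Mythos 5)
Your proof is correct and follows essentially the same route as the paper: both bound $\|T_n(E)\|$ from below by a linear expression in $V(n)$ (you via entrywise domination of the operator norm, the paper via norm equivalence and the $\infty$-norm) and then choose $M$ large enough relative to $(k+1)^{k+1}/C_n$. The difference is purely cosmetic.
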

	\begin{proof}
		Note that since all norms on $M_2\left(\mathbb{R}\right)$ are equivalent, it suffices to show that (\ref{eq_op_norm_Tn}) holds when replacing $\|\cdot\|$ by $\|\cdot\|_\infty$. Now, note that there exists $N>0$ such that for every $E\in\left[-2,2\right]$ and for every $x>N$,
		\begin{center}
			$\left\|\left(\begin{matrix}
				E-x & -1\\
				1 & 0
			\end{matrix}\right)\right\|_\infty=\left|E-x\right|\geq \frac{x}{2}$.
		\end{center}
		Now, one can take $M=\max\left\{2\left(k+1\right)^{k+1}\cdot\frac{1}{C_n},N\right\}$ and the result follows.
	\end{proof}
	Given $E\in\mathbb{R}$, let $\text{Sol}\left(E\right)$ be the set of all functions $u:\mathbb{N}\cup\left\{0\right\}\to\mathbb{R}$ which satisfy (\ref{eq_ev_hl}) for all $n\in\mathbb{N}$, and in addition
	We have the following
	\begin{corollary}\label{cor_L_norms}
		For every $E\in\left[-2,2\right]$ and for every $4\leq L\in\mathbb{N}$, $\underset{u\in\text{Sol}\left(E\right)}{\max}\,\|u\|_L\geq L^L$.
	\end{corollary}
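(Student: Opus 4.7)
The plan is to exploit the large norm of the transfer matrix at the spike $n_0=k^2$ with $k=\lfloor\sqrt{L}\rfloor\ge 2$ (so $n_0\le L<(k+1)^2$). Although the preceding lemma only asserts $\|T_{n_0}(E)\|\cdot C_{n_0}>(k+1)^{k+1}$, its proof shows that by enlarging $M$ one can achieve any desired lower bound, so I would strengthen the inductive choice of $V(n_0)$ to ensure $\|T_{n_0}(E)\|\cdot C_{n_0}\ge\sqrt{2}\,L^L$ for all $E\in[-2,2]$ and all $L\in[k^2,(k+1)^2)\cap\mathbb{N}$. Translating this into a bound on $\|\Phi_{n_0}(E)\|$ uses the identity $T_{n_0}=\Phi_{n_0}\Phi_{n_0-1}^{-1}$: since $\det\Phi_{n_0-1}=1$ and every $2\times 2$ matrix $M$ with $|\det M|=1$ satisfies $\|M^{-1}\|=\|M\|$, one obtains
\[
\|\Phi_{n_0}(E)\|\;\ge\;\frac{\|T_{n_0}(E)\|}{\|\Phi_{n_0-1}(E)\|}\;\ge\;\|T_{n_0}(E)\|\cdot C_{n_0}\;\ge\;\sqrt{2}\,L^L,
\]
where the second inequality uses $\|\Phi_{n_0-1}(E)\|\le 1/C_{n_0}$ from the definition of $C_{n_0}$.

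To build the required solution, I would pick $w\in\mathbb{R}^2$ with $\|w\|=1$ to be a right singular vector of $\Phi_{n_0}(E)$ associated with its largest singular value. The unique $u\in\text{Sol}(E)$ with $(u(1),u(0))^T=w$ then satisfies $(u(n_0+1),u(n_0))^T=\Phi_{n_0}(E)w$, whose Euclidean norm equals $\|\Phi_{n_0}(E)\|\ge\sqrt{2}\,L^L$. In the generic case $L\ge n_0+1$, both indices $n_0$ and $n_0+1$ lie in $[1,L]$ and therefore contribute to $\|u\|_L^2$, which immediately gives $\|u\|_L\ge\sqrt{2}\,L^L>L^L$.

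The main obstacle is the boundary case $L=n_0=k^2$, where $u(n_0+1)$ lies outside the sum defining $\|u\|_L$. Here I would argue at the previous spike: applying the same strengthening at step $k-1$ ensures $\|\Phi_{n_0-1}(E)\|\ge\sqrt{2}\,L^L$. Writing $\Phi_{n_0-1}(E)$ in terms of its rows $r_1,r_2\in\mathbb{R}^2$, the comparison between the Frobenius and spectral norms of a $2\times 2$ matrix yields $\|r_1\|^2+\|r_2\|^2\ge\|\Phi_{n_0-1}(E)\|^2$, whence $\max(\|r_1\|,\|r_2\|)\ge\|\Phi_{n_0-1}(E)\|/\sqrt{2}\ge L^L$. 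Taking $w$ to be the unit vector in the direction of whichever row is larger produces $u\in\text{Sol}(E)$ with either $|u(n_0)|=\|r_1\|$ or $|u(n_0-1)|=\|r_2\|$ at least $L^L$; since both $n_0$ and $n_0-1$ lie in $[1,L]$, this completes the proof.
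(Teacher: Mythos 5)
Your proposal follows the same core strategy as the paper's proof: locate the last spike $n_0=k^2\le L$, bound $\|\Phi_{n_0}(E)\|$ from below by $C_{n_0}\|T_{n_0}(E)\|$ using $\|AB\|\ge\|A\|/\|B^{-1}\|$ and $\|\Phi_{n_0-1}^{-1}(E)\|=\|\Phi_{n_0-1}(E)\|$, and realize this size by the solution whose initial data is a maximizing unit vector. Where you deviate, you are in fact repairing weak points in the paper's own argument, and this is what your changes buy. First, the paper ends by asserting ``clearly $(k+1)^{k+1}\ge L^L$'', which fails for every $k\ge 2$ (already $3^3<4^4$), so the inductive target really must be raised exactly as you propose; this is legitimate because $C_{n_0}$ is determined before $V(n_0)$ is chosen and $\min_{E\in[-2,2]}\|T_{n_0}(E)\|\to\infty$ as $V(n_0)\to\infty$, so one may demand $C_{n_0}\|T_{n_0}(E)\|\ge\sqrt2\,\max\{L^L: k^2\le L<(k+1)^2\}$. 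Second, the paper's chain $\|u\|_{n_0}\ge\left(u(n_0-1)^2+u(n_0)^2\right)^{1/2}=\|\Phi_{n_0}(E)\widetilde x\|$ misindexes: $\Phi_{n_0}(E)\widetilde x=(u(n_0+1),u(n_0))^T$, so when $L=n_0$ the large entry $u(n_0+1)$ lies outside $\|u\|_L$. This is precisely the boundary case you isolate, and your device of choosing the initial vector along the larger row of $\Phi_{n_0-1}(E)$ (via the Frobenius/operator norm comparison) is a correct way to handle it; some such argument is genuinely needed, since for $L=k^2$ the values $u(1),\dots,u(L)$ do not involve $V(k^2)$ at all.

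One bookkeeping point in that boundary case: the step-$(k-1)$ strengthening as you state it covers only exponents $L'<k^2$ and lower-bounds $\|T_{(k-1)^2}(E)\|\,C_{(k-1)^2}$, hence $\|\Phi_{(k-1)^2}(E)\|$, not $\|\Phi_{k^2-1}(E)\|$. Since the potential vanishes strictly between the spikes, $\Phi_{k^2-1}(E)=\Phi_{(k-1)^2+1,k^2-1}(E)\,\Phi_{(k-1)^2}(E)$ with a free block whose norm is bounded on $[-2,2]$ by a constant $c_k$ depending only on $k$, so you should demand at step $k-1$ a bound of the form $\sqrt2\,c_k\,(k^2)^{k^2}$ rather than $\sqrt2\,(L')^{L'}$ for $L'<k^2$. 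This is available by the same ``any desired lower bound'' principle you already invoke, so the fix is minor and your argument then closes.
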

	\begin{proof}
		Let $L\in\mathbb{N}$ and let $k\in\mathbb{N}$ be such that $k^2\leq L<\left(k+1\right)^2$. Denote $n=k^2$. Let $y\in\mathbb{R}^2$, $\|y\|=1$ such that $\|T_{n}\left(E\right)y\|=\|T_n\left(E\right)\|$. In addition, let $x\in\mathbb{R}^2$ such that $\Phi_{n-1}\left(E\right)x=y$ and consider $\widetilde{x}=\frac{x}{\|x\|}$. Let $u\in\text{Sol}\left(E\right)$ be the solution which satisfies
		\begin{center}
			$\left(\begin{matrix}
				u\left(1\right)\\
				u\left(0\right)
			\end{matrix}\right)=\widetilde{x}$.
		\end{center}
		Then we have
		\begin{equation}\label{eq_lower_bd_on_uL}
			\|u\|_L\geq\|u\|_n\geq\left(u\left(n-1\right)^2+u\left(n\right)^2\right)^{\frac{1}{2}}=\|\Phi_n\left(E\right)\widetilde{x}\|=\frac{1}{\|x\|}\|T_n\left(E\right)\cdot\Phi_{n-1}\left(E\right)x\|=\frac{1}{\|x\|}\|T_n\left(E\right)y\|.
		\end{equation}
		Now, note that since $x=\Phi_n-1\left(E\right)y$, we have
		\begin{equation}\label{eq_connection_between_x_and_y_norms}
			\|x\|=\|\Phi_{n-1}^{-1}\left(E\right)y\|\leq\|\Phi_{n-1}^{-1}\left(E\right)\|=\|\Phi_{n-1}\left(E\right)\|.
		\end{equation}
		Plugging (\ref{eq_connection_between_x_and_y_norms}) in (\ref{eq_lower_bd_on_uL}) and using the fact that $\|T_n\left(E\right)y\|=\|T_n\left(E\right)\|$, we obtain
		\begin{center}
			$\frac{1}{\|x\|}\|T_n\left(E\right)y\|\geq\frac{1}{\|\Phi_{n-1}\left(E\right)\|}\|T_n\left(E\right)\|\geq C_n\|T_n\left(E\right)\|\geq\left(k+1\right)^{k+1}$.
		\end{center}
		Finally, clearly $\left(k+1\right)^{k+1}\geq L^L$, and the result follows.
	\end{proof}
	We are now ready to prove Theorem \ref{main_thm_1}.
	\begin{proof}[Proof of Theorem \ref{main_thm_1}]
		We will show that the family of rank-one perturbations $\left(H_\theta\right)_{\theta\in\left[0,\pi\right)}$ which corresponds with $H$ defined at the beginning of this section satisfies the desired property. Namely, for every $\theta\in\left[0,\pi\right)$, $\dim_P\left(\mu_\theta|_{\left[-2,2\right]}\right)=0$.
		
		Recall the definitions of $a,b,\omega$ (\ref{eq_a_b_omega_def}). By Proposition \ref{packing_dim_borel_transform_prop}, it is enough to show that for every $t\in\left(0,1\right)$, for $\mu_\theta$-almost every $E\in\mathbb{R}$ there exist $\varepsilon_0>0$ such that for every $\varepsilon<\varepsilon_0$,
		\begin{equation}\label{eq_im_m_epsilon_minus_t}
			\im m_\theta\left(E+i\varepsilon\right)\geq\varepsilon^{-t}.
		\end{equation}
		Recall that for $\mu_\theta$-almost every $E\in\mathbb{R}$, the solution $u_\theta$ satisfies (\ref{eq_gen_ef_hl}). Thus we have
		\begin{equation}\label{eq_bd_b_L}
			b\left(L\right)\leq C\left(E\right)L^{1+\varepsilon}.
		\end{equation}
		In addition, note that for every $\varepsilon>0$, by Corollary \ref{cor_L_norms} we have
		\begin{center}
			$\frac{1}{\varepsilon}=\omega\left(L\left(\varepsilon\right)\right)\geq L\left(\varepsilon\right)^{L\left(\varepsilon\right)}$.
		\end{center}
		For a fixed $t\in\left(0,1\right)$, for small enough $\varepsilon$ we have $\frac{1}{L\left(\varepsilon\right)}<1-t$ and so we obtain
		\begin{equation}\label{eq_bd_1_epsilon}
			\frac{1}{\varepsilon^{1-t}}\geq\frac{1}{\varepsilon^{\frac{1}{L\left(\varepsilon\right)}}}\geq L\left(\varepsilon\right).
		\end{equation}
		By Proposition \ref{DT_prop}, (\ref{eq_bd_b_L}) and (\ref{eq_bd_1_epsilon}), we obtain
		\begin{equation}
			\im m_\theta\left(E+i\varepsilon\right)\geq\frac{1}{4\varepsilon b\left(L\left(\varepsilon\right)\right)}\geq\frac{1}{4C\left(E\right)\varepsilon L\left(\varepsilon\right)^{1+\delta}}\geq\frac{1}{4C\left(E\right)\varepsilon^{1-\left(1-t\right)^\delta}}.
		\end{equation}
		Letting $\delta\to 0$, we obtain (\ref{eq_im_m_epsilon_minus_t}) and so Theorem \ref{main_thm_1} is proved.
	\end{proof}
	\subsection{Operators with Hausdorff-dimension zero for every rank-one perturbation}
	In this section, we will define a potential $V:\mathbb{N}\to\mathbb{R}$ such that the resulting family of Schr\"{o}dinger operators $\left(H_\theta\right)_{\theta\in\left[0,\pi\right)}$ will have zero-dimensional spectral measures for all $\theta$. Given that the family constructed in the previous section has zero packing dimension and that in general the packing dimension is greater than or equal to the Hausdorff dimension, this subsection might seem redundant. However, as we will show, by taking potentials which are extremely sparse, one can verify that the Borel transforms of the corresponding spectral measures will only be large occasionally as $\varepsilon$ goes to zero. This construction will come in handy in the next section, where we will discuss some line operators given by pasting two half-line operators defined using ideas we present here.
	
	Let $\left(L_n\right)_{n=1}^\infty$ be any sequence of natural numbers which satisfies the assumptions of Lemma \ref{lemma_ess_spec}. Let $V:\mathbb{N}\to\mathbb{R}_{>0}$ be any sequence of positive numbers which vanishes on $\mathbb{N}\setminus\left\{L_k:k\in\mathbb{N}\right\}$, and in addition
	\begin{equation}\label{eq_potential_growth}
		\ln\left(V\left(L_n\right)\right)-\sum\limits_{k=1}^{n-1}\ln\left(V\left(L_k\right)\right)\geq L_n+1+n^2
	\end{equation}
	Let $H:\ell^2\left(\mathbb{N}\right)\to\ell^2\left(\mathbb{N}\right)$ be defined by $H=\Delta+V$.
	\begin{theorem}\label{thm_zero_hausdorff_dim}
		For every $\theta\in\left[0,\pi\right)$, $\sigma_{\text{ess}}\left(H_\theta\right)=\left[-2,2\right]$,  and the restriction $\mu_\theta\left(\left[-2,2\right]\cap\cdot\right)$ is zero-dimensional.
	\end{theorem}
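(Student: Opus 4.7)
I will attack the theorem by reducing the zero-dimensionality claim to a positive Lyapunov exponent estimate and applying Proposition \ref{JLTHM}. The essential spectrum statement is essentially free: since $V(n)$ vanishes off $\{L_k\}$ and $V(L_k)\to\infty$ (by (\ref{eq_potential_growth})), Lemma \ref{lemma_ess_spec} gives $\sigma_{\text{ess}}(H)=[-2,2]$, and the remark following it extends this to every $H_\theta$. For the dimension statement, since $\mu_\theta$ restricted to $\mathbb{R}\setminus[-2,2]$ lives on isolated eigenvalues and is therefore zero-dimensional, it suffices to prove that $\overline{\gamma}(E)>0$ for every $E\in[-2,2]$ and invoke Proposition \ref{JLTHM} with $A=[-2,2]$.

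The plan for positive Lyapunov exponent is a direct transfer-matrix computation that exploits the extreme sparseness together with the growth condition (\ref{eq_potential_growth}). Between two consecutive ``bumps'' $L_{k-1}$ and $L_k$, the transfer matrices are all equal to the free transfer matrix $T_0(E)=\bigl(\begin{smallmatrix} E & -1 \\ 1 & 0 \end{smallmatrix}\bigr)$, whose powers satisfy the uniform Chebyshev bound $\|T_0(E)^m\|\leq C(m+1)$ for all $E\in[-2,2]$. At a bump, $\|T_{L_k}(E)\|\leq V(L_k)+3$ and, once $V(L_k)$ is large, $\|T_{L_k}(E)\|\geq V(L_k)-3$. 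Multiplying these bounds gives
\begin{equation*}
\|\Phi_{L_n-1}(E)\|\leq C^{n}L_n^{n}\prod_{k=1}^{n-1}\bigl(V(L_k)+3\bigr)
\end{equation*}
uniformly in $E\in[-2,2]$. For the matching lower bound on $\|\Phi_{L_n}(E)\|$, I would repeat the $SL(2,\mathbb{R})$ device from the proof of Corollary \ref{cor_L_norms}: pick $y$ with $\|T_{L_n}(E)y\|=\|T_{L_n}(E)\|$, write $y=\Phi_{L_n-1}(E)x$, and use $\det\Phi_{L_n-1}=1$ (so $\|\Phi_{L_n-1}^{-1}\|=\|\Phi_{L_n-1}\|$) to conclude
\begin{equation*}
\|\Phi_{L_n}(E)\|\;\geq\;\frac{\|T_{L_n}(E)\|}{\|\Phi_{L_n-1}(E)\|}\;\geq\;\frac{V(L_n)-3}{C^{n}L_n^{n}\prod_{k<n}(V(L_k)+3)}.
\end{equation*}

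Taking logarithms, absorbing the $\pm 3$ into a bounded correction, and applying (\ref{eq_potential_growth}) yields
\begin{equation*}
\frac{1}{L_n}\ln\|\Phi_{L_n}(E)\|\;\geq\;1+\frac{n^{2}-n\ln(CL_n)-O(n)}{L_n}.
\end{equation*}
The remaining task is to verify that the right-hand side has a positive $\liminf$ regardless of how the sparseness sequence $(L_n)$ is chosen in Lemma \ref{lemma_ess_spec}, which I see as the main obstacle (slight, but it must be addressed). A short case analysis handles it: if $L_n\geq e^{n}$ then $n\ln L_n=o(L_n)$ and the leading $1$ survives; if $L_n\leq n^{2}$ then the $n^{2}/L_n$ buffer dominates $n\ln L_n/L_n$; and intermediate regimes can be interpolated similarly. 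In every scenario, $\overline{\gamma}(E)$ is bounded below by an absolute positive constant on $[-2,2]$, so Proposition \ref{JLTHM} completes the proof for every $\theta\in[0,\pi)$.
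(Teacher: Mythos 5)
Your proposal is correct and shares the paper's skeleton: lower-bound $\|\Phi_{L_n}(E)\|$ at the bump sites via $\|AB\|\geq\|A\|/\|B^{-1}\|$ and unimodularity, use $\|T_{L_n}(E)\|\approx V(L_n)$ together with the growth condition (\ref{eq_potential_growth}) to force $\overline{\gamma}(E)\geq 1$, and then invoke Proposition \ref{JLTHM}; the essential spectrum part via Lemma \ref{lemma_ess_spec} is identical. Where you genuinely diverge is the treatment of the free blocks: the paper fixes a compact $I=[a,b]\subseteq(-2,2)$, where the free transfer products are bounded by a constant $C_I$, gets $\overline{\gamma}\geq 1$ on $I$, and exhausts the open band (the endpoints being a dimension-zero set anyway), whereas you work uniformly on all of $[-2,2]$, including the band edges where the free cocycle is parabolic, by using the Chebyshev bound $\|T_0(E)^m\|\leq C(m+1)$ and then absorbing the resulting extra factor $L_n^{n}$ with the $n^2$ slack in (\ref{eq_potential_growth}). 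Your version buys a statement that needs no interior exhaustion and makes transparent why the $n^2$ term is built into (\ref{eq_potential_growth}); the paper's version avoids your $n\ln L_n$ bookkeeping entirely. One small point to tighten: your two cases ($L_n\geq e^{n}$ and $L_n\leq n^{2}$) do not literally cover the range $n^{2}<L_n<e^{n}$, but the gap is closed in one line, since $L_n\leq e^{n}$ already gives $n\ln L_n\leq n^{2}$, so the $n^{2}$ buffer dominates there as well; with that observation your $\liminf$ is in fact $\geq 1$ uniformly in $E\in[-2,2]$, and the argument is complete.
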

	\begin{proof}
		The fact that the essential spectrum of $H_\theta$ consists of the interval $\left[-2,2\right]$ follows from Lemma \ref{lemma_ess_spec}. For the second part, we follow along the same lines of the proof of \cite[Theorem 1.3]{JL1} with slight modifications. Let $I=\left[a,b\right]\subseteq\left(-2,2\right)$ be an arbitrary closed interval. To establish the result, it suffices to show that $\mu\left(I\cap\cdot\right)$ has the desired properties. Let $E\in I$. Note that $\det\left(\Phi_{k,m}\left(E\right)\right)=1$ and so $\|\Phi_{k,m}^{-1}\left(E\right)\|=\|\Phi_{k,m}\left(E\right)\|$. In addition, for every $n\in\mathbb{N}$ and $L_n\leq k\leq m<L_{n+1}$, $\Phi_{k,m}\left(E\right)$ is the same as the corresponding transfer matrix for the free Laplacian, namely the half-line Schr\"{o}dinger operator which is defined by setting $b\equiv 0$. In particular, there is a constant $C_I$ such that $1\leq\|\Phi_{k,m}\left(E\right)\|<C_I$. Moreover, for every $n\in\mathbb{N}$, we have
		\begin{equation}
			\Phi_{L_n-1,L_n}\left(E\right)=T_{L_n}\left(E\right)=\left(\begin{matrix}
				E-V\left(L_n\right) & -1\\
				1 & 0
			\end{matrix}\right)
		\end{equation}
		and so
		\begin{equation}\label{norm_bounds}
			\max\left(1,V\left(L_n\right)-2\right)\leq\|T_{L_n}\left(E\right)\|\leq V\left(L_n\right)+3.
		\end{equation}
		Now, let $m\in\mathbb{N}$ and let $n\in\mathbb{N}$ such that $L_n\leq m<L_{n+1}$. Then, we have
		\begin{equation}
			\Phi_m\left(E\right)=\Phi_{L_n,m}\left(E\right)T_{L_n}\left(E\right)\Phi_{L_{n-1},L_n-1}\left(E\right)T_{L_{n-1}}\left(E\right)\cdots\Phi_{L_1,L_2-1}\left(E\right)T_{L_1}\left(E\right)\Phi_{L_1-1}\left(E\right)
		\end{equation}
		and so, using the inequalities $\|AB\|\geq\|A\|\frac{1}{\|B^{-1}\|}$, $\|AB\|\geq\|B\|\frac{1}{\|A^{-1}\|}$, the fact that \mbox{$\|\Phi_{k,m}\left(E\right)\|=\|\Phi_{k,m}^{-1}\left(E\right)\|$} and (\ref{norm_bounds}), we get
		\begin{equation} \label{eq1}
			\begin{split}
				\|\Phi_m\left(E\right)\|\geq\left(C_I^{n+1}\prod\limits_{k=1}^{n-1}\left(V\left(L_k\right)+3\right)\right)^{-1}\left(V\left(L_n\right)-2\right) & \geq\left(C_I^{n+1}\prod\limits_{k=1}^{n-1}\left(2V\left(L_k\right)\right)\right)^{-1}\left(\frac{V\left(L_n\right)}{2}\right) \\
				& \geq C^{-n}\left(\prod\limits_{k=1}^{n-1}V\left(L_k\right)\right)^{-1}V\left(L_n\right)
			\end{split}
		\end{equation}
		for some $C>0$, depending only on $I$. Now, for every $E\in I$, for $m=L_n$, we have
		\begin{center}
			$\frac{1}{m}\ln\|\Phi_m\left(E\right)\|\geq\frac{1}{L_n}\ln\left(C^{-n}\prod\limits_{k=1}^{n-1}V\left(L_k\right)V\left(L_n\right)\right)=\frac{1}{L_n}\left(-n\ln C-\sum\limits_{k=1}^{n-1}\ln\left(V\left(L_k\right)\right)+\ln\left(V\left(L_n\right)\right)\right)\geq\frac{1}{L_n}\left(-n\ln C+L_n+1+n\ln C\right)>1$,
		\end{center}
		where the last inequality is true for $n$ large enough by the properties of $V$. This implies that \mbox{$\overline{\gamma}\left(E\right)=\underset{n\to\infty}{\limsup}\frac{1}{n}\ln\|\Phi_n\left(E\right)\|\geq 1$} and so by Proposition \ref{JLTHM}, for every $\theta\in\left[0,\pi\right)$, $\mu_\theta\left(I\cap\cdot\right)$ is zero-dimensional.
	\end{proof}
	
	Theorem \ref{thm_zero_hausdorff_dim} shows that no matter how sparse a potential $V$ is, if the values it takes on its support are sufficiently large then the Borel transforms of the spectral measures will occasionally take large values close to the boundary. The next two results contrast this by saying that for very sparse potentials, one can also find points near the boundary for which the Borel transforms will (uniformly) take relatively small values.
	\begin{proposition}\label{bdd_free_prop}
		Let $H=\Delta+V$ be a Schr\"{o}dinger operator. Suppose that there exists $N\in\mathbb{N}$ such that for every $k\geq N$, $V\left(k\right)=0$. Then for every $0<\gamma_1<1$ and $\gamma_2>1$ there exist $L_0\in\mathbb{N}$ and $C_1,C_2>0$ such that for every $E\in\left[-2,2\right]$, every $u\in\text{Sol}\left(E\right)$ and every $L>L_0$,
		\begin{equation}\label{bdd_sol_eq}
			C_1L^{\gamma_1}\leq\|u\|_L\leq C_2L^{\gamma_2}.
		\end{equation}
		Furthermore, if $\widetilde{V}$ is a potential which satisfies $\widetilde{V}|_{\left[1,N+M\right]}=V|_{\left[1,N+M\right]}$ for some $M\in\mathbb{N}$, then for every $u\in\text{Sol}\left(E\right)$ and $L_0\leq L\leq L_0+M$, (\ref{bdd_sol_eq}) holds.
	\end{proposition}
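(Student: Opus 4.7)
The crucial feature of $V$ is its compact support: for $n > N$, the recurrence defining $\text{Sol}(E)$ collapses to the free one, $u(n+1) + u(n-1) = E u(n)$. The plan is to first transport the normalized initial data $(u(1), u(0))^T$ forward to level $N$ via the finite product $\Phi_N(E)$, which is invertible, continuous in $E$ on the compact interval $[-2,2]$, and has determinant $1$; this yields a uniform two-sided bound on $\|(u(N+1), u(N))^T\|$ over $E \in [-2,2]$ and over normalized $u \in \text{Sol}(E)$. From level $N$ onward, $u$ is governed by the explicit free-Laplacian solutions: writing $E = 2\cos\theta$ with $\theta \in (0,\pi)$, the general solution is $A\cos(n\theta) + B\sin(n\theta)$, while at $E = \pm 2$ the solution space is spanned by $(\pm 1)^n$ and $n(\pm 1)^n$.

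For the upper bound I would argue that on $[N, \infty)$ the free transfer matrix has spectral radius one and its iterates grow at most linearly, including at the endpoints where the Jordan block forces polynomial (not exponential) growth. This yields $|u(n)| \le C n$ uniformly in $E \in [-2, 2]$, hence $\|u\|_L^2 \le C L^3$, which is absorbed into $C_2 L^{\gamma_2}$ for the specified $\gamma_2$. For the lower bound, the plan is to use the explicit form of the free solution together with a compactness argument in $(E, \text{initial data})$: for $\theta$ bounded away from $0$ and $\pi$, a trigonometric identity gives
\[
\sum_{n=1}^L \bigl|A\cos(n\theta) + B\sin(n\theta)\bigr|^2 \ge c\, L\, (A^2 + B^2),
\]
while near the endpoints at least one linear mode is active and contributes a genuinely larger $L^3$ growth. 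Continuity in $E$ on the compact interval $[-2,2]$ together with the uniform bound on the initial data at level $N$ then produces a constant $C_1 > 0$ with $\|u\|_L \ge C_1 \sqrt{L}$ for all $L$ past some threshold $L_0$, which controls $C_1 L^{\gamma_1}$ for the admissible range of $\gamma_1$.

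For the ``furthermore'' clause, I would note that the recurrence at index $n$ uses only $V(n)$, so if $\widetilde V \equiv V$ on $[1, N+M]$ then the two solutions coincide on $[0, N+M]$ for any shared initial data, and in particular $\|u^V\|_L = \|u^{\widetilde V}\|_L$ for every $L \le N + M$. Hence the bounds established for $V$ transfer verbatim to $\widetilde V$ on the window $L_0 \le L \le L_0 + M$, provided $L_0 + M \le N + M$; this is arranged by taking $L_0 \ge N$ at the outset (replacing $L_0$ by $\max(L_0, N)$ if needed), which is harmless since $V$ already vanishes past $N$. The main obstacle I anticipate is the uniform lower bound: the parametrization $\theta = \arccos(E/2)$ degenerates at the endpoints of $[-2,2]$, where the trigonometric basis collapses to constants and linear functions, and producing a positive constant $C_1$ valid \emph{uniformly} across this degeneration is what forces the compactness-plus-case-analysis argument above rather than a direct computation at each fixed $E$.
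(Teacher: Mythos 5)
Your reduction is the same as the paper's: you control the finitely many perturbed transfer matrices $\Phi_1\left(E\right),\ldots,\Phi_N\left(E\right)$ by continuity and unimodularity on the compact interval to get uniform two-sided bounds on the solution data at site $N$, you treat the solution as a free solution from $N$ onward, and you obtain the ``furthermore'' clause from the fact that the initial values of a solution depend only on the corresponding initial segment of the potential. The paper's proof does exactly this, except that it quotes the free-case bounds as well known, while you sketch a proof of them; your sketch yields the correct uniform free estimates, namely $c\sqrt{L}\leq\|u\|_L\leq CL^{3/2}$ for normalized solutions, uniformly over $E\in\left[-2,2\right]$ (the $L^{3/2}$ coming from the Jordan block at $E=\pm2$, the $\sqrt{L}$ from the bounded oscillating solutions in the bulk).

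The genuine gap is your final absorption step. The bounds $c\sqrt{L}$ and $CL^{3/2}$ do not give (\ref{bdd_sol_eq}) for the stated ranges: if $1<\gamma_2<3/2$ then $L^{3/2}\leq C_2L^{\gamma_2}$ fails for large $L$, and if $1/2<\gamma_1<1$ then $C_1L^{\gamma_1}\leq c\sqrt{L}$ fails for large $L$. This cannot be repaired by sharpening your estimates: with $N=1$ and $V\equiv0$, the normalized solutions $u\left(n\right)=\cos\left(n\pi/2\right)$ at $E=0$ and $u\left(n\right)=n$ at $E=2$ have $\|u\|_L\sim\sqrt{L/2}$ and $\|u\|_L\sim L^{3/2}/\sqrt{3}$ respectively, so validity of (\ref{bdd_sol_eq}) for all large $L$ forces $\gamma_1\leq1/2$ and $\gamma_2\geq3/2$. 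In other words, your argument (like the ``well-known'' free bound the paper invokes) proves the proposition exactly for $0<\gamma_1\leq1/2$ and $\gamma_2\geq3/2$; asserting that these estimates are ``absorbed into'' the exponents $\gamma_1<1<\gamma_2$ is the step that fails, and you should restrict the exponent range (or flag the discrepancy) rather than claim absorption. Separately, your fix in the ``furthermore'' clause goes the wrong way: you need the window $L_0\leq L\leq L_0+M$ to lie where the two solutions coincide, i.e.\ $L\leq N+M$, which requires $L_0\leq N$; replacing $L_0$ by $\max\left(L_0,N\right)$ does not achieve this. The correct adjustment is to decrease $L_0$ to about $N$, absorbing the finitely many scales between $N$ and the original $L_0$ into the constants $C_1,C_2$ by a compactness argument.
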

	\begin{proof}
		For every $k_1,k_2\in\mathbb{N}$ such that $k_1\leq k_2$ and $u:\mathbb{N}\to\mathbb{C}$, we denote
		\begin{center}
			$\|u\|_{k_1,k_2}=\left(\sum\limits_{j=k_1}^{k_2}\left|u\left(j\right)\right|^2\right)^{\frac{1}{2}}$.
		\end{center}
		For every $k\in\left\{1,\ldots,N-1\right\}$, $E\to\|\Phi_{1,k}\left(E\right)\|$ is a positive continuous function on $\left[-2,2\right]$ and so it obtains minimal and maximal values. Along with the fact that $\det\Phi_{1,N}\left(E\right)=1$ for every $E\in\mathbb{R}$, we conclude that there exist constants $m,M>0$ such that for every $E\in\left[-2,2\right]$, every $u\in\text{Sol}\left(E\right)$ satisfies
		\begin{equation}
			m\leq\|u\|_{N-1}\leq M
		\end{equation}
		and 
		\begin{equation}\label{psi_u_bdd}
			m<\underset{\coloneqq\psi\left(u\right)}{\underbrace{\left\|\left(\begin{matrix} u\left(N-1\right)\\u\left(N\right)\end{matrix}\right)\right\|}}<M.
		\end{equation}
		In addition, $\frac{u}{\psi\left(u\right)}|_{k\geq N}$ is a normalized solution of (\ref{eq_ev_hl}) with $V\equiv 0$. It is well-known that in that case, (\ref{bdd_sol_eq}) holds and so there exist $L_1\in\mathbb{N}$ and $\widetilde{C}_1,\widetilde{C}_2>0$ such that for every $L>L_1$,
		\begin{equation}				\widetilde{C}_1\left(L-N\right)^{\gamma_1}\leq\left\|\frac{u}{\psi\left(u\right)}\right\|_{N,L}\leq \widetilde{C}_2\left(L-N\right)^{\gamma_2}.
		\end{equation}
		Plugging in (\ref{psi_u_bdd}) and denoting $\overline{C}_1=\widetilde{C}_1\psi\left(u\right)$, $\overline{C}_2=\widetilde{C}_2\psi\left(u\right)$, we get
		\begin{equation}
			\overline{C}_1\left(L-N\right)^{\gamma_1}\leq\|u\|_{N,L}\leq \overline{C}_2\left(L-N\right)^{\gamma_2}.
		\end{equation}
		In addition, there exists $L_2\in\mathbb{N}$ such that for every $L>L_2$, $L^{\gamma_2}\geq M$ and $\left(L-N\right)^{\gamma_1}\geq\frac{1}{2}L^{\gamma_1}$. Now, for every $L>L_0\coloneqq\max\left\{L_1,L_2\right\}$ we have
		\begin{center}
			$\|u\|_{L}\leq\|u\|_{N-1}+\|u\|_{N,L}\leq M+\left(L-N\right)^{\gamma_2}\leq C_2L^{\gamma_2}$
		\end{center}
		where $C_2=\overline{C}_2+1$. In addition,
		\begin{center}
			$\|u\|_L\geq\|u\|_{N,L}\geq\widetilde{C}_1\left(L-N\right)^{\gamma_1}\geq C_1L^{\gamma_1}$
		\end{center}
		where $C_1=\frac{\overline{C}_1}{2}$, as required. Now,	for every $m\in\mathbb{N}$, the first $m$ values of a normalized solution $u$ to (\ref{eq_ev_hl}) depend only on the $m$ first entries of the potential, and the result follows.
	\end{proof}
	\begin{corollary}\label{bdd_m_cor}
		Given $N\in\mathbb{N}$ and $a_1,\ldots,a_N\in\mathbb{R}$, for every $\alpha\in\left(0,1\right)$ there exists $\varepsilon>0$ such that for every $\eta\in\left(0,\varepsilon\right)$ there exists $K>N$ such that for every potential $V$ which satisfies
		\begin{enumerate}
			\item $V\left(i\right)=a_i$ for every $1\leq i\leq N$, and
			\item $V\left(i\right)=0$ for every $N+1\leq i\leq K$,
		\end{enumerate}
		for every $E\in\left[-2,2\right]$, $\theta\in\left[0,\pi\right)$ and $\delta\in\left(\eta,\varepsilon\right)$ we have
		\begin{equation}
			\delta^\alpha\left|m_\theta\left(E+i\delta\right)\right|\leq 1.
		\end{equation}
	\end{corollary}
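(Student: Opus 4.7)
The plan is to combine Proposition \ref{prop_JL}, which bounds $|m_\theta(E+i\delta)|$ by a multiple of the ratio $\|v_\theta\|_{L(\delta)}/\|u_\theta\|_{L(\delta)}$, with the two-sided polynomial estimates on solution norms provided by Proposition \ref{bdd_free_prop} inside a sufficiently long free region. The idea is that on a free region, all normalized solutions grow at essentially the same polynomial rate, so the ratio above is comparable to $L(\delta)^{\gamma_2-\gamma_1}$ for $\gamma_1<1<\gamma_2$, and this translates into a polynomial-in-$\delta$ upper bound on $|m_\theta|$ whose exponent can be made strictly smaller than $\alpha$ by taking $\gamma_1,\gamma_2$ close to $1$.

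Concretely, I would first fix $\alpha\in(0,1)$ and choose $0<\gamma_1<1<\gamma_2$ with $\beta\coloneqq\frac{\gamma_2-\gamma_1}{2\gamma_1}<\alpha$, which is possible by taking both exponents close to $1$. Let $V^{*}$ denote the reference potential with $V^{*}(i)=a_i$ for $i\le N$ and $V^{*}(i)=0$ for $i>N$. Proposition \ref{bdd_free_prop} applied to $V^{*}$ yields constants $L_0,C_1,C_2$ such that $C_1L^{\gamma_1}\le\|u\|_L\le C_2L^{\gamma_2}$ for every $E\in[-2,2]$, every $u\in\text{Sol}(E)$, and every $L>L_0$. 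For any $V$ agreeing with $V^{*}$ on $[1,K]$, the recurrence (\ref{eq_ev_hl}) gives $u_\theta^V(k)=u_\theta^{V^{*}}(k)$ for every $\theta$ and every $k\le K+1$, so the same bounds persist for $V$ in the range $L_0<L\le K$; in particular $\omega^V(L)=\omega^{V^{*}}(L)\in[C_1^2L^{2\gamma_1},C_2^2L^{2\gamma_2}]$ on this range, and $L^V(\delta)=L^{V^{*}}(\delta)$ whenever $L^{V^{*}}(\delta)\le K$.

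Combining these ingredients, Proposition \ref{prop_JL} gives $|m_\theta(E+i\delta)|\le(2+\sqrt{3})\|v_\theta\|_{L(\delta)}/\|u_\theta\|_{L(\delta)}\le(2+\sqrt{3})(C_2/C_1)L(\delta)^{\gamma_2-\gamma_1}$, while $\omega(L(\delta))=1/\delta\ge C_1^2L(\delta)^{2\gamma_1}$ forces $L(\delta)\le(C_1^2\delta)^{-1/(2\gamma_1)}$. Substituting produces an inequality of the form $|m_\theta(E+i\delta)|\le A\delta^{-\beta}$, with $A$ depending only on $C_1,C_2,\gamma_1,\gamma_2$, uniformly in $E\in[-2,2]$ and $\theta\in[0,\pi)$. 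I then choose $\varepsilon>0$ small enough that $A\varepsilon^{\alpha-\beta}\le 1$ and $L^{V^{*}}(\varepsilon)>L_0$; for each $\eta\in(0,\varepsilon)$, I take $K$ to be any natural number exceeding both $N$ and $L^{V^{*}}(\eta)$. Then for every $\delta\in(\eta,\varepsilon)$ one has $L_0<L^V(\delta)=L^{V^{*}}(\delta)\le L^{V^{*}}(\eta)<K$, so the polynomial bounds apply at $L^V(\delta)$ and $\delta^\alpha|m_\theta(E+i\delta)|\le A\delta^{\alpha-\beta}\le 1$.

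The main technical subtlety that I expect to require care is verifying that $L(\delta)$ and the norms $\|u_\theta\|_{L(\delta)},\|v_\theta\|_{L(\delta)}$—quantities which a priori depend on the entire potential $V$—are in fact determined solely by $V|_{[1,K]}$ in the range of interest, so that $K$ can be chosen as a function of $\eta$ (and of $a_1,\ldots,a_N$ and $\alpha$) alone, independent of the values of $V$ beyond $K$. This reduces to the elementary observation that $u(k)$ in the recurrence depends only on $V(1),\ldots,V(k-1)$, but matching this observation with the monotonicity of $\omega$ and the definition of $L(\delta)$ is the bookkeeping that produces the uniformity claimed in the statement.
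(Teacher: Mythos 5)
Your proposal is correct and follows essentially the same route as the paper, which proves the corollary simply by citing Propositions \ref{prop_JL} and \ref{bdd_free_prop}; you have merely spelled out the quantitative bookkeeping (the bound $|m_\theta(E+i\delta)|\leq(2+\sqrt{3})\,\|v_\theta\|_{L(\delta)}/\|u_\theta\|_{L(\delta)}$, the estimate $L(\delta)\lesssim\delta^{-1/(2\gamma_1)}$ from $\omega(L(\delta))=1/\delta$, and the locality of $L(\delta)$ in $V|_{[1,K]}$ via the ``Furthermore'' clause) that the paper leaves implicit. No issues.
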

	\begin{proof}
		This is a direct consequence of Propositions \ref{prop_JL} and \ref{bdd_free_prop}.
	\end{proof}
	\section{Line Operators}\label{section_line_ops}
	\subsection{Some general theory}
	Let $H:\ell^2\left(\mathbb{Z}\right)\to\ell^2\left(\mathbb{Z}\right)$ be a Schr\"{o}dinger operator. The pair $\left\{\delta_0,\delta_1\right\}$ is cyclic for $H$, and so we will study the measure $\mu=\mu_{\delta_0}+\mu_{\delta_1}$, the sum of the spectral measures of $\delta_0$ and $\delta_1$. Throughout, we will denote $\mathbb{Z}_+\coloneqq\left\{1,2,\ldots\right\}$ and $\mathbb{Z}_-\coloneqq\left\{0,-1,-2,\ldots\right\}$ and for every set $A\subseteq\mathbb{Z}$, we denote by $P_A$ the orthogonal projection of $\ell^2\left(\mathbb{Z}\right)$ onto $\ell^2\left(A\right)$.
	 As in the half-line case, we will study solutions to the eigenvalue equation
	 \begin{equation}\label{eq_ev_line}
	 	u\left(n-1\right)+u\left(n+1\right)+V\left(n\right)u\left(n\right)=Eu\left(n\right),\,\,\,\,\,\,n\in\mathbb{Z}.
	 \end{equation}
 	Note that given $E\in\mathbb{R}$, the space of solutions to (\ref{eq_ev_line}) is two-dimensional. In addition, given a solution $u$, its restriction to $\mathbb{Z}_\pm$ can be viewed as a half-line solution in the sense of Section \ref{section_hl_ops}.
 	\begin{definition}
 		Given $E\in\mathbb{R}$, a solution to (\ref{eq_ev_line}) is called \textit{subordinate} if and only if its restrictions to $\mathbb{Z}_\pm$ are subordinate (under suitable normalization) as half-line solutions.
 	\end{definition}
 	We will use the following theorem, which is a version of subordinacy theory line operators. 
	
	\begin{proposition}\emph{\cite[Theorem 2.7.11]{DF}}\label{sub_thm_line}
		Let $\mu_s$, $\mu_{ac}$ be the singular and absolutely continuous parts (with respect to the Lebesgue measure) of $\mu$ respectively. Then
		\begin{enumerate}
			\item $\mu_s$ is supported on the set
			\begin{center}
				$S=\left\{E\in\mathbb{R}:\text{(\ref{eq_ev_line}) has a subordinate solution}\right\}$.
			\end{center}
			\item $\mu_{ac}$ is supported on the set $N=N_-\cup N_+$, where
			\begin{center}
				$N_\pm=\left\{E\in\mathbb{R}:\text{ (\ref{eq_ev_line}) has no subordinate solution at }\pm\infty\right\}$.
			\end{center}
		\end{enumerate}
	\end{proposition}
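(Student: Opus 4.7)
The plan is to reduce line-case subordinacy to two half-line problems via the rank-two decoupling at the bond between sites $0$ and $1$. Let $H_\pm$ denote the half-line restrictions of $H$ to $\ell^2(\mathbb{Z}_\pm)$ in the sense of Section \ref{section_hl_ops}, and let $m_\pm(z)$ be their Weyl $m$-functions. Since $H$ differs from $H_-\oplus H_+$ by a rank-two perturbation supported on $\{\delta_0,\delta_1\}$, the Schur complement yields explicit rational expressions for the $2\times 2$ Green's function block $G_{ij}(z)=\langle\delta_i,(H-z)^{-1}\delta_j\rangle$, $i,j\in\{0,1\}$, in terms of $m_\pm(z)$ and $z$. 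Because $\mu=\mu_{\delta_0}+\mu_{\delta_1}$ is the boundary measure of $G_{00}+G_{11}$, its Lebesgue decomposition is controlled entirely by the boundary behavior of $m_\pm$ on $\mathbb{R}$.

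For item (2), I would apply Proposition \ref{prop_JL} to each $H_\pm$: for Lebesgue-a.e.\ $E$, the half-line $m$-function satisfies $\im m_\pm(E+i0)>0$ precisely on $N_\pm$. Inspection of the rational formula for $G_{ii}$ shows that $\im G_{ii}(E+i0)>0$ whenever at least one of $\im m_\pm(E+i0)$ is positive, so the essential support of $\mu_{ac}$ is contained in $N_-\cup N_+$.

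For item (1), I need to show that $\mu_s$ is supported on $S$. Using Proposition \ref{prop_JL} on each half-line translates the growth of $|m_\pm(E+i\varepsilon)|$ as $\varepsilon\downarrow 0$ into the ratio $\|u\|_L/\|v\|_L$ for the half-line solutions with initial data determined by the matching condition at the cut. A singular contribution to $\mu$ at $E$ arises only when a specific zero appears in the denominator of the Schur-complement expression for $G_{ii}$; this zero in turn forces the two half-line subordinate solutions to have matching initial data at the cut, i.e., to extend to a single subordinate line solution. Hence $\mu_s$ is supported on $S$.

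The main obstacle is the matching step in item (1): a priori, $(N_-^c\cap N_+^c)\setminus S$ could be nonempty (subordinate half-line solutions at both ends, but not gluing into a single line solution), and one must verify that $\mu_s$ assigns no mass to this set. The resolution uses the sharp two-sided bounds of Proposition \ref{prop_JL} together with the Wronskian identity (\ref{eq_const_wron}) applied on each half-line to show that a divergence of $|G_{ii}(E+i\varepsilon)|$ along $\varepsilon\downarrow 0$ requires simultaneous subordinacy on both sides with consistent initial data; on $(N_-^c\cap N_+^c)\setminus S$, the mismatch of initial data between the two half-line subordinate solutions prevents such a divergence, so $\mu$ has no singular part there.
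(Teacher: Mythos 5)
This proposition is not proved in the paper at all: it is quoted from \cite[Theorem 2.7.11]{DF}, so there is no internal argument to compare yours to, and your sketch has to stand on its own. Its architecture (decouple at the bond $\{0,1\}$, express the Green's function block $G_{ij}$ through the half-line $m$-functions, read off the a.c.\ part from boundary imaginary parts and the singular part from divergence of $G_{ii}$) is indeed the standard route. Part (2) is essentially fine modulo a direction slip: what you need is not ``$\im G_{ii}(E+i0)>0$ whenever one of $\im m_\pm(E+i0)>0$'' but the converse, namely that for Lebesgue-a.e.\ $E$ with subordinate solutions at both ends one has $\im m_\pm(E+i0)=0$ and hence $\im G_{ii}(E+i0)=0$; this does follow from the explicit formula together with Lebesgue-a.e.\ existence of boundary values and Proposition \ref{prop_JL} via the M\"obius relations among the $m_\theta$, since $\mu_{ac}\ll\mathrm{Leb}$ makes Lebesgue-a.e.\ statements sufficient.

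The genuine gap is in part (1), precisely at the ``matching step'' you flag, and the resolution you offer does not close it. Divergence of $|G_{ii}(E+i\varepsilon)|$ only tells you that the Herglotz denominator, schematically $m_+(E+i\varepsilon)+\widetilde m_-(E+i\varepsilon)$, tends to $0$; this forces both imaginary parts to vanish in the limit, but it gives no individual boundary limits for $m_\pm$ at that fixed $E$ (the real parts may oscillate in opposite phase), hence no well-defined boundary condition $\theta(E)$ on either side to which the pointwise criterion of Proposition \ref{prop_JL} (with the Wronskian (\ref{eq_const_wron})) could be applied. And since $\mu_s$ is singular, you cannot invoke Lebesgue-a.e.\ existence of boundary values of $m_\pm$ as you implicitly do in part (2). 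The known proofs (Gilbert's two-singular-endpoint theorem, \cite[Theorem 2.7.11]{DF}, and the mechanism the paper itself borrows from the appendix of \cite{L} when it defines $\theta(E)$) get around this by first extracting, for $\mu_s$-a.e.\ $E$, the boundary data of the candidate solution from limits of ratios such as $M_{01}(E+i\varepsilon)/M_{11}(E+i\varepsilon)$ (a Poltoratskii/Kac-type statement for the matrix Herglotz function, valid with respect to the singular part), and only then prove subordinacy of that specific solution at each end. Without an ingredient of this kind, the step ``denominator zero $\Rightarrow$ the two half-line subordinate solutions have matching initial data'' is an assertion of exactly the hard part of the theorem, not a proof of it.
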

	By the theory of rank-one perturbations and the Jitomirskaya-Last theory, if a subordinate solution exists, then it is unique (up to scalar multiplication). Thus, for every $E\in S$, there exists a unique $\theta\left(E\right)\in\left[0,\pi\right)$ such there exists subordinate solution $u$ of (\ref{eq_ev_line}) satisfying
	\begin{center}
		$\left(\begin{matrix}
			u\left(1\right)\\
			u\left(0\right)
		\end{matrix}\right)=\left(\begin{matrix}
			\cos\theta\\-\sin\theta
		\end{matrix}\right)$.
	\end{center}
	Let us define $H_\pm:\ell^2\left(\mathbb{Z}_\pm\right)\to\ell^2\left(\mathbb{Z}_\pm\right)$ by
	\begin{center}
		$H_\pm=P_{\mathbb{Z}_\pm} HP_{\mathbb{Z}_\pm}$,
	\end{center}
	and for every $\theta\in\left[0,\pi\right)$, let $H_+^\theta=H_+-\tan\theta\langle\delta_1,\cdot\rangle\delta_1$, and $H_-^\theta=H_--\cot\theta\langle\delta_0,\cdot\rangle\delta_0$. Finally, let $\mu_+^\theta$ and $\mu_-^\theta$ be the spectral measure of $\delta_1$ and $\delta_0$ with respect to $H_+^\theta$ and $H_-^\theta$ respectively, and let $m_\pm^\theta$ be their corresponding Borel transforms.
	We will use the following result from \cite{DKL}:
	\begin{proposition}\emph{\cite[Corollary 21]{DKL}}\label{DKL_Cor}
		Let $M:\mathbb{C}_+\to\mathbb{C}_+$ be the Borel transform of $\mu=\mu_0+\mu_1$. Then for every $z\in\mathbb{C}_+$,
		\begin{center}
			$\left|M\left(z\right)\right|\leq\underset{\theta\in\left[0,\pi\right)}{\sup}\left|m_+^\theta\left(z\right)\right|$.
		\end{center}
	\end{proposition}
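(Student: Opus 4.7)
I would proceed by first reducing the whole-line Borel transform $M(z)$ to an explicit expression in the half-line Weyl m-functions $m_\pm(z)$. Writing $H = (H_+\oplus H_-) + B$ with the rank-two coupling $B = \langle\delta_1,\cdot\rangle\delta_0 + \langle\delta_0,\cdot\rangle\delta_1$ and iterating the second resolvent identity at the sites $\{0,1\}$ yields a $2\times 2$ linear system for $G_{00}(z), G_{11}(z), G_{01}(z)$, which solves to give
\[
G_{00}(z) = \frac{m_-(z)}{1 - m_+(z) m_-(z)},\qquad G_{11}(z) = \frac{m_+(z)}{1 - m_+(z) m_-(z)},
\]
and therefore
\[
M(z) = G_{00}(z) + G_{11}(z) = \frac{m_+(z) + m_-(z)}{1 - m_+(z)\,m_-(z)}.
\]
Here $m_\pm(z)$ are the Dirichlet half-line m-functions, i.e.\ $m_\pm(z) = m_\pm^{0}(z)$ in the notation of the paper.

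Next, for fixed $z\in\mathbb{C}_+$, the rank-one perturbation identity reads $m_+^\theta(z) = m_+(z)/(1 - \tan\theta\cdot m_+(z))$, so the family $\{m_+^\theta(z) : \theta\in[0,\pi)\}$ is precisely the image of $\mathbb{R}$ under the Möbius map
\[
f_z(\lambda) \;=\; \frac{m_+(z)}{1 - \lambda\, m_+(z)},
\]
namely the classical Weyl circle at energy $z$. I would then attempt to realize $M(z)$ as $f_z(\lambda^*(z))$ for some complex parameter $\lambda^*(z)$; solving $f_z(\lambda)=M(z)$ algebraically gives $\lambda^*(z)$ as an explicit rational expression in $m_+(z), m_-(z)$.

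The final step is a Phragm\'en--Lindel\"of / maximum-modulus argument. Since $m_+(z)\in\mathbb{C}_+$, the only pole of $f_z$, namely $1/m_+(z)$, lies in $\mathbb{C}_-$; hence $f_z$ is analytic and bounded on the closed upper half-plane, with $f_z(\lambda)\to 0$ as $\lambda\to\infty$. Consequently the supremum of $|f_z|$ over $\overline{\mathbb{C}_+}\cup\{\infty\}$ is attained on the real boundary $\mathbb{R}$, which is exactly the Weyl circle; this gives
\[
|M(z)| = |f_z(\lambda^*(z))| \;\leq\; \sup_{\lambda\in\mathbb{R}}|f_z(\lambda)| \;=\; \sup_{\theta\in[0,\pi)}|m_+^\theta(z)|.
\]

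The main obstacle is verifying that $\lambda^*(z)$ really lies in $\overline{\mathbb{C}_+}$, so that the maximum-modulus argument applies. The bare Herglotz property of $m_-(z)$ is not enough; one must exploit the finer constraints that $m_-(z)$ is the spectral Weyl m-function of a half-line Jacobi operator with unit off-diagonal entries (in particular the inequality relating $\operatorname{Im}(m_-(z))$ and $\operatorname{Im}(z)\cdot|m_-(z)|^2$ coming from the continued-fraction recursion $m_-^{(1)}(z) = V(0) - z - 1/m_-(z)$ being itself Herglotz). Without these m-function constraints the inequality is easily seen to fail, so this geometric step -- locating $\lambda^*(z)$ on the correct side of the Weyl circle -- is the heart of the proof.
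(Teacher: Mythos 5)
Your reduction is on the right track: the identity $M=\frac{m_++m_-}{1-m_+m_-}$ (with $G_{11}=\frac{m_+}{1-m_+m_-}$ and $G_{00}=\frac{m_-}{1-m_+m_-}$) is correct, and locating $M$ inside a disk bounded by a boundary-condition circle is exactly the mechanism behind the cited result (note the paper itself gives no proof, only the reference to [DKL, Corollary 21]). But the step you yourself call the heart of the proof, namely $\lambda^*(z)=f_z^{-1}(M(z))\in\overline{\mathbb{C}_+}$, is not merely unfinished: with the family you use, $f_z(t)=\frac{m_+}{1-t m_+}$, i.e.\ the Borel transforms of the spectral measures of the rank-one perturbations (which is how $m_+^\theta$ is defined in this paper), that inclusion is false, and so is the inequality in that literal form. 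Take the free Laplacian and $z=iR$ with $R$ large: $m_\pm(iR)\approx i/R$, so $M(iR)\approx 2i/R$ (the measure $\mu_0+\mu_1$ has total mass $2$), while $f_z(\mathbb{R}\cup\{\infty\})$ is the circle through $0$ and $m_+$ tangent to the real axis, of diameter $|m_+(iR)|\approx 1/R$; hence $\sup_\theta\left|m_+^\theta(iR)\right|\approx 1/R<\left|M(iR)\right|$, and $\lambda^*$ lies in the lower half-plane. No finer constraint on $m_-$ (your $\operatorname{Im}m_-\geq \operatorname{Im}z\,|m_-|^2$ is of course satisfied by the free Laplacian) can rescue the maximum-modulus step, so the gap cannot be closed as set up.

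The missing idea is that the relevant family is the Weyl-normalized boundary-condition m-functions $g_z(t)=\frac{m_++t}{1-t m_+}$, $t=\tan\theta$ (equivalently $\frac{m_+\cos\theta+\sin\theta}{\cos\theta-m_+\sin\theta}$), whose circle passes through $-1/m_+$ at $\theta=\frac{\pi}{2}$ rather than through $0$; this is the family in DKL, and it differs from your $f_z$-family by the $\theta$-dependent real-affine relation $g_z(\tan\theta)=\tan\theta+\sec^2\theta\,m_+^\theta$. With it, your own strategy terminates immediately and without solving for any $\lambda^*$: one has identically $M=g_z(m_-)$, the pole $1/m_+$ of $g_z$ lies in $\mathbb{C}_-$, so $g_z$ maps $\overline{\mathbb{C}_+}\cup\{\infty\}$ onto the closed disk bounded by the Weyl circle, and since $m_-(z)\in\mathbb{C}_+$ by the bare Herglotz property, $|M(z)|\leq\sup_{\theta}\left|g_z(\tan\theta)\right|$. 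The true statement available for the family you used is $\left|G_{11}(z)\right|\leq\sup_\theta\left|m_+^\theta(z)\right|$, because $G_{11}=f_z(m_-)$; together with the mirror bound for $G_{00}$ this only yields $|M|\leq\sup_\theta|m_+^\theta|+\sup_\theta|m_-^\theta|$. So your framework is the right one, but the decisive inclusion fails for the circle you chose, and the fix is to change the normalization of the half-line m-functions in the statement (as in DKL), or to accept a constant, rather than to import additional m-function constraints on $m_-$.
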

	\subsection{Proof of Theorem \ref{main_thm_2}}
	Our goal in this section is to prove Theorem \ref{main_thm_2}. Namely, We will construct a Schr\"{o}dinger operator $H$ which satisfies the following properties:
	\begin{enumerate}
		\item For every $\theta\in\left[0,\pi\right)$, the essential spectrum of $H_{\pm}^\theta$ consists of the interval $\left[-2,2\right]$ and the restriction $\mu_\pm^\theta\left(\cdot\cap\left[-2,2\right]\right)$ is zero-dimensional.
		\item $H$ has one-dimensional spectrum, in the sense that for every Borel set $A$ with $\dim_H\left(A\right)<1$, we have $\mu\left(A\right)=0$.
	\end{enumerate}
	Before constructing $H$, let us shortly describe the idea. We will define a sequence of potentials, $\left(V^{\left(k\right)}\right)_{k=1}^\infty$, such that for every $n\in\mathbb{Z}$, the sequence $\left(V^{\left(k\right)}\left(n\right)\right)_{k=1}^\infty$ is eventually constant. Then, we will define $V$ by setting $V\left(n\right)=\underset{k\to\infty}{\lim}\,V^{\left(k\right)}\left(n\right)$.
	
	We start with $V^0\equiv 0$. By Corollary \ref{bdd_m_cor}, for every $\alpha\in\left(0,1\right)$, if the potential vanishes for a sufficiently long interval of the form $\left[-n,n\right]$, then for every $\theta\in\left[0,\pi\right)$, both of $\varepsilon^\alpha \left|m_{\pm}^\theta\left(E+i\varepsilon\right)\right|$ will be bounded. Now, if we change the value of the potential at some positive point, then obviously, only the positive half-line restriction of $H$ will be affected. Again, by Corollary \ref{bdd_m_cor}, we can now set the potential to be $0$ (both left and right to the origin) for a long enough time so that both of the left and right side $m$-functions will recover. Then, we may change the value of the potential at the negative half-line. If we continue with this process, we make sure that at each time (i.e.\ for every $\varepsilon>0$) one of the half-line $m$-functions will not be too big. On the other hand, as long as we set the values of the potential so that (\ref{eq_potential_growth}) holds, the resulting half-line spectral measures will have zero Hausdorff dimension.
	
	We will denote the (whole-line) potential and operator at each step by $V^{\left(k\right)}$ and $H^{\left(k\right)}$. We will also denote the half-line restrictions of $V$ by $V^{\left(k\right)}_{\pm}$. Let us now formally present the construction.
	
	\begin{itemize}
		\item We set $V^{\left(0\right)}\equiv 0$.
		\item By Corollary \ref{bdd_m_cor}, there exist $K\in\mathbb{N}$ and $\varepsilon_1>0$ such that for every potential $V$ which satisfies $V\left(-K\right)=\ldots=V\left(0\right)=\ldots=V\left(K\right)=0$, for every $\delta\in\left(\frac{\varepsilon_1}{2},\varepsilon_1\right)$, every $E\in\left[-2,2\right]$ and every $\theta\in\left[0,\pi\right)$,
		\begin{center}
			$\delta^{\alpha_1}\left|m_\pm^\theta\left(E+i\delta\right)\right|\leq1$,
		\end{center}
		where $\alpha_1=\frac{1}{2}$. We set $L_1^+=K+1$ and $V^1\left(j\right)=\begin{cases}
			M & j=L_1^+\\
			0 & \text{otherwise}
		\end{cases}$,
		where $M>0$ is chosen such that (\ref{eq_potential_growth}) holds for $n=1$.
		\item $V^{\left(1\right)}_\pm$ both eventually vanish and so by Corollary \ref{bdd_m_cor}, there exists $K\in\mathbb{N}$ and $\varepsilon_2<\frac{\varepsilon_1}{2}$ such that if $V\left(-1\right)=\ldots=V\left(-K\right)=0$ and $V\left(L_1^++1\right)=\ldots=V\left(L_1^++K\right)=0$, then for every $E\in\left[-2,2\right]$ and every $\theta\in\left[0,\pi\right)$, for every $\delta\in\left[\varepsilon_2,\varepsilon_1\right)$ we have
		\begin{center}
			$\delta^{\alpha_1}\left|m_{1,-}^\theta\left(E+i\delta\right)\right|\leq1$
		\end{center}
		and for every $\delta\in\left[\frac{\varepsilon_2}{2},\varepsilon_2\right)$ we have
		\begin{center}
			$\delta^{\alpha_2}\left|m_{1,\pm}^\theta\left(E+i\delta\right)\right|\leq1$,
		\end{center}
		where $\alpha_2=\frac{1}{3}$. We set $L_1^-=-K-1$ and $V^{\left(2\right)}\left(j\right)=\begin{cases}
			1 & j=L_1^-\\
			V^{\left(1\right)}\left(j\right) & \text{otherwise}
		\end{cases}$.
		\item $V^{\left(2\right)}_\pm$ both eventually vanish and so by Corollary \ref{bdd_m_cor} there exist $K\in\mathbb{N}$ and $\varepsilon_3<\frac{\varepsilon_2}{2}$ such that if $V\left(L_1^--1\right)=\ldots=V\left(L_1^--K\right)=0$ and $V\left(L_1^+\right)=V\left(L_1^++1\right)=\ldots=V\left(L_1^++K\right)=0$, then for every $E\in\left[-2,2\right]$ and every $\theta\in\left[0,\pi\right)$, for every $\delta\in\left[\varepsilon_3,\varepsilon_2\right)$ we have
		\begin{center}
			$\delta^{\alpha_2}\left|m_{2,+}^\theta\left(E+i\delta\right)\right|\leq1$
		\end{center}
		and for every $\delta\in\left[\frac{\varepsilon_3}{2},\varepsilon_3\right)$ we have
		\begin{center}
			$\delta^{\alpha_3}\left|m_{2,\pm}^\theta\left(E+i\delta\right)\right|\leq1$,
		\end{center}
		where $\alpha_3=\frac{1}{4}$. We set $L_2^+=L_1^++K+1$ and $V^{\left(3\right)}\left(j\right)=\begin{cases}
			A & j=L_1^+\\
			V^{\left(2\right)}\left(j\right) & \text{otherwise}
		\end{cases}$,
		where $A$ is some value such that the $4$-tuple $L_1^+,L_2^+,V^{\left(3\right)}\left(L_1^+\right),V^{\left(3\right)}\left(L_2^+\right)=A$ satisfies (\ref{eq_potential_growth}).
		\item $V^{\left(3\right)}_{\pm}$ both eventually vanish and so by Corollary \ref{bdd_m_cor} there exist $K\in\mathbb{N}$ and $\varepsilon_4<\frac{\varepsilon_3}{2}$ such that if $V\left(L_1^--1\right)=\ldots=V\left(L_1^--K\right)=0$ and $V\left(L_2^++1\right)=\ldots=V\left(L_2^++K\right)=0$, then for every $E\in\left[-2,2\right]$ and every $\theta\in\left[0,\pi\right)$, for every $\delta\in\left[\varepsilon_4,\varepsilon_3\right)$ we have
		\begin{center}
			$\delta^{\alpha_3}\left|m_{3,-}^\theta\left(E+i\delta\right)\right|\leq1$
		\end{center}
		and for every $\delta\in\left[\frac{\varepsilon_4}{2},\varepsilon_4\right)$ we have
		\begin{center}
			$\delta^{\alpha_4}\left|m_{3,\pm}^\theta\left(E+i\delta\right)\right|\leq1$,
		\end{center}
		where $\alpha_4=\frac{1}{5}$. We set $L_2^-=L_1^--K-1$ and $V^{\left(4\right)}\left(j\right)=\begin{cases}
			A & j=L_2^-\\
			V^{\left(3\right)}\left(j\right) & \text{otherwise}
		\end{cases}$, where $A$ is some value such that the $4$-tuple $L_1^-,L_2^-,V^{\left(4\right)}\left(L_1^-\right),V^{\left(4\right)}\left(L_2^-\right)=A$ satisfies (\ref{eq_potential_growth}).
		\item Suppose $V^{\left(2k\right)}$ was defined for some $k\in\mathbb{N}$. $V^{\left(2k\right)}_{\pm}$ both eventually vanish and so by Corollary \ref{bdd_m_cor}, there exist $K\in\mathbb{N}$ and $\varepsilon_{2k+1}<\frac{\varepsilon_{2k}}{2}$ such that if $V\left(L_k^--1\right)=\ldots=V\left(L_k^--K\right)=0$ and $V\left(L_k^++1\right)=\ldots=V\left(L_k^++K\right)=0$, then for every $E\in\left[-2,2\right]$ and every $\theta\in\left[0,\pi\right)$, for every $\delta\in\left[\varepsilon_{2k+1},\varepsilon_{2k}\right)$ we have
		\begin{center}
			$\delta^{\alpha_{2k}}\left|m_{2k,+}^\theta\left(E+i\delta\right)\right|\leq1$
		\end{center}
		and for every $\delta\in\left[\frac{\varepsilon_{2k+1}}{2},\varepsilon_{2k+1}\right)$ we have
		\begin{center}
			$\delta^{\alpha_{2k+1}}\left|m_{2k,\pm}^\theta\left(E+i\delta\right)\right|\leq1$,
		\end{center}
		where $\alpha_{2k+1}=\frac{1}{2k+2}$. We set $L_{k+1}^+=L_k^++K+1$ and \mbox{$V^{\left(2k+1\right)}\left(j\right)=\begin{cases}
				A & j=L_{k+1}^+\\
				V^{\left(2k\right)}\left(j\right) & \text{otherwise}
			\end{cases}$}, where $A$ is some value such that the $\left(n+1\right)$-tuple $L_1^+,\ldots,L_{k+1}^+,V^{\left(2k+1\right)}\left(L_1^+\right),\ldots,V^{\left(2k+1\right)}\left(L_{k+1}^+\right)$ satisfies (\ref{eq_potential_growth}).
		\item $V^{\left(2k+1\right)}_\pm$ are both eventually free and so by Corollary \ref{bdd_m_cor}, there exist $K\in\mathbb{N}$ and \mbox{$\varepsilon_{2k+2}<\frac{\varepsilon_{2k+1}}{2}$} such that if $V\left(L_k^--1\right)=\ldots=V\left(L_k^--K\right)=0$ and \mbox{$V\left(L_{k+1}^++1\right)=\ldots=V\left(L_{k+1}^++K\right)=0$}, then for every $E\in\left[-2,2\right]$ and every $\theta\in\left[0,\pi\right)$, for every $\delta\in\left[\varepsilon_{2k+2},\varepsilon_{2k+1}\right)$ we have
		\begin{center}
			$\delta^{\alpha_{2k+1}}\left|m_{2k+1,+}^\theta\left(E+i\delta\right)\right|\leq1$
		\end{center}
		and for every $\delta\in\left[\frac{\varepsilon_{2k+2}}{2},\varepsilon_{2k+2}\right)$ we have
		\begin{center}
			$\delta^{\alpha_{2k+1}}\left|m_{2k+1,\pm}^\theta\left(E+i\delta\right)\right|\leq1$,
		\end{center}
		where $\alpha_{2k+2}=\frac{1}{2k+3}$. We set $L_{k+1}^-=L_k^--K-1$ and \mbox{$V^{\left(2k+2\right)}\left(j\right)=\begin{cases}
				A & j=L_{k+1}^-\\
				V^{\left(2k+1\right)}\left(j\right) & \text{otherwise}
			\end{cases}$}, where $A$ is some value such that the $\left(n+1\right)$-tuple $L_1^-,\ldots,L_{k+1}^-,V^{\left(2k+2\right)}\left(L_1^-\right),\ldots,V^{\left(2k+2\right)}\left(L_{k+1}^-\right)=A$ satisfies (\ref{eq_potential_growth}).
	\end{itemize}
	Note that for every $n\in\mathbb{Z}$, if for some $k\in\mathbb{N}$, $V^k\left(n\right)\neq V^{k+1}\left(n\right)$, then for every $j>k$, \mbox{$V_j\left(n\right)=V^{k+1}\left(n\right)$}. Thus, we can define our potential $V$ by setting $V\left(n\right)=\underset{k\to\infty}{\lim}V^k\left(n\right)$ for every $n\in\mathbb{Z}$.
	\begin{proof}[Proof of Theorem \ref{main_thm_2}]
		We have defined the potential $V$ so that (\ref{eq_potential_growth}) will hold at $\pm\infty$, and so Property $1$ follows immediately by Theorem \ref{thm_zero_hausdorff_dim}. In addition, by our construction, for every $\alpha\in\left(0,1\right)$ there exists $\delta>0$ such that for every $\gamma\in\left(0,\delta\right)$ and $E\in\left[-2,2\right]$, either
		\begin{center}
			$\underset{\theta\in\left[0,\pi\right)}{\sup}\,\gamma^{1-\alpha}\left|m_-^\theta\left(E+i\gamma\right)\right|\leq 1$
		\end{center}
		or
		\begin{center}
			$\underset{\theta\in\left[0,\pi\right)}{\sup}\,\gamma^{1-\alpha}\left|m_+^\theta\left(E+i\gamma\right)\right|\leq 1$.
		\end{center}
		By Proposition \ref{DKL_Cor}, this implies that $\underset{\varepsilon\to0}{\limsup}\,\epsilon^{1-\alpha}\left|M\left(E+i\varepsilon\right)\right|<\infty$. Thus, by Propositions \ref{prop_hp_decomposition} and \ref{DJLS_thm}, Property $2$ holds.
	\end{proof}
	\subsection{Proof of Theorem \ref{main_thm_3}}
	 In the rest of this section we will prove Theorem \ref{main_thm_3}. We will prove a slightly more general result.
	 \begin{theorem}\label{main_thm_3_prelim}
	 	Let $H:D\left(H\right)\to\ell^2\left(\mathbb{Z}\right)$ be a Schr\"{o}dinger operator. Let $\mu,\left(\mu_\theta^\pm\right)_{\theta\in\left[0,\pi\right)}$ be defined as before. Suppose that there exist $\alpha,\beta\in\left[0,1\right]$ such that $\alpha<\beta$, $\dim_\text{H}\left(\mu\right)\geq\beta$ and for every $\theta\in\left[0,\pi\right)$, $\dim_\text{H}\left(\mu_\theta^\pm\right)\leq\alpha$. Then there exists a Borel set $A\subseteq\mathbb{R}$ such that $\mu\left(A\right)>0$, and one of the following must hold:
	 	\begin{enumerate}
	 		\item For every $\theta\in\left[0,\pi\right)$, $\mu_\theta^+\left(A\right)=0$, or
	 		\item For every $\theta\in\left[0,\pi\right)$, $\mu_\theta^-\left(A\right)=0$.
	 	\end{enumerate}
	 \end{theorem}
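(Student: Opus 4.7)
The plan is to exploit the uniqueness of whole-line subordinate solutions and the pairwise disjointness of the half-line subordinate-direction sets, reducing the problem to a dichotomy on the pushforward of $\mu$ to $[0,\pi)$. First I would reduce to the purely singular case: since $\dim_H(\mu_\theta^\pm)\leq\alpha<1$, each $\mu_\theta^\pm$ lives on a set of Lebesgue measure zero, so $(\mu_\theta^\pm)_{ac}=0$. Invariance of the absolutely continuous spectrum under rank-one perturbations then forces the Lebesgue measure of $N_\pm$ (from Proposition~\ref{sub_thm_line}) to vanish, and hence $\mu_{ac}=0$ as well. Thus $\mu=\mu_s$ is supported on the set $S$ of energies admitting a line-subordinate solution.

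By uniqueness of the subordinate direction at $+\infty$ for the half-line problem, the sets $S_\theta^+:=\{E:u_\theta^+\text{ is subordinate at }+\infty\}$ are pairwise disjoint as $\theta$ varies over $[0,\pi)$, and each purely-singular $\mu_\theta^+$ lives on $S_\theta^+$; an analogous statement holds for the $-$ side, and on $S$ the whole-line phase $\theta(E)=\theta_+(E)=\theta_-(E)$ is a well-defined Borel function $\theta:S\to[0,\pi)$. Let $\nu:=\theta_*\mu$ be the finite pushforward measure on $[0,\pi)$.

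If $\nu$ has an atom at some $\theta_0$, set $T:=\theta^{-1}(\theta_0)\subseteq S_{\theta_0}^+$, so that $\mu(T)=\nu(\{\theta_0\})>0$. Choose a Borel set $U$ with Hausdorff dimension at most $\alpha$ and $\mu_{\theta_0}^+(\mathbb{R}\setminus U)=0$, and put $A:=T\setminus U$. Then $\mu_\theta^+(A)\leq\mu_\theta^+(T)=0$ for $\theta\neq\theta_0$ because $T\cap S_\theta^+=\emptyset$; $\mu_{\theta_0}^+(A)=0$ by the choice of $U$; and $\mu(A)=\mu(T)>0$ because the hypothesis $\dim_H(\mu)\geq\beta>\alpha$ forces $\mu(U)=0$. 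This yields conclusion $(1)$.

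The main obstacle is the case when $\nu$ is atomless: note that $(\theta_-)_*\mu=\nu$ as well, so the $-$ side leads to the same atomless pushforward and cannot be resolved by symmetry alone. In this regime $\mu\perp\mu_\theta^\pm$ for every individual $\theta$, since $\mu(S_\theta^\pm)=\nu(\{\theta\})=0$; but a uniform-in-$\theta$ Borel separator is not immediate. I would apply a measurable-selection theorem to construct Borel sets $W^\pm\subseteq[0,\pi)\times\mathbb{R}$ whose $\theta$-sections are supports of $\mu_\theta^\pm$ of Hausdorff dimension at most $\alpha$, and then show that at least one of the projections $\pi_2(W^\pm)$ fails to carry the full $\mu$-mass. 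This step --- which produces the $(1)$-or-$(2)$ dichotomy in the conclusion --- should combine the pairwise disjointness of the $\theta$-sections, the dimensional gap $\alpha<\beta$, and a Kac-type comparison (Proposition~\ref{rnd_m_func_thm}) of $\im M$ against $\im m_\theta^\pm$. This is where the main technical difficulty lies.
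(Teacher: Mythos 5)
Your reduction to the purely singular case and your treatment of the atomic case are fine (and the atomic case indeed follows from disjointness of the subordinate-phase sets plus the dimension gap), but the heart of the theorem is exactly the case you leave open, and your sketch for it does not contain the idea that makes it work. The difficulty is not one of measurable selection: for $\mu$-a.e.\ $E$ in the relevant set, $E$ lies in \emph{both} $S_{\theta(E)}^+$ and $S_{\theta(E)}^-$, and an uncountable union of $\theta$-sections of Hausdorff dimension $\le\alpha$ can perfectly well have dimension one and carry full $\mu$-mass, so no argument based on covering the supports $W^\pm$ and comparing projections can, by itself, produce a single Borel set of positive $\mu$-measure that is null for \emph{all} $\mu_\theta^+$ (or all $\mu_\theta^-$). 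In other words, your plan establishes $\mu\perp\mu_\theta^\pm$ for each fixed $\theta$ when the pushforward is atomless, but the uniform-in-$\theta$ statement needs a different mechanism, which you have not supplied.

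The paper's mechanism is twofold, and both ingredients are absent from your proposal. First, along the subordinate phase $\theta(E)$ one compares the two half-line $m$-functions with each other (not $M$ with $m_\theta^\pm$): by Kac's theorem (Proposition \ref{rnd_m_func_thm}) the ratio $\im m_+^{\theta}/(\im m_+^{\theta}+\im m_-^{\theta})$ converges $(\mu_+^\theta+\mu_-^\theta)$-a.e.\ to the Radon--Nikodym derivative $g=d\mu_+^\theta/d(\mu_+^\theta+\mu_-^\theta)$, and one splits $\widetilde{S}$ into $\widetilde{A_1}$ (where the $\liminf$ of this ratio along a fixed rational sequence is $0$) and its complement $\widetilde{A_2}$; on $\widetilde{A_1}$ this immediately kills $\mu_+^\theta$ for every boundary condition, since by subordinacy (Proposition \ref{prop_JL}) $\mu_+^\theta$ only charges $\{\theta(E)=\theta\}$ there and $g=0$ on that set. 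Second, to kill $\mu_-^\theta$ on $\widetilde{A_2}$ one needs to rule out the region $\{0<g<1\}$, i.e.\ mutual absolute continuity of $\mu_+^\theta$ and $\mu_-^\theta$ on a set of positive measure; this is exactly Lemma \ref{assist_claim} (writing $H=H_-^\theta\oplus H_+^\theta+\langle\varphi_\theta,\cdot\rangle\varphi_\theta$, such a set would force $\mu$ to charge a set of dimension $\le\alpha$, contradicting $\dim_H(\mu)\ge\beta>\alpha$). Your ``dimensional gap'' intuition is the right raw material, but without this direct-sum-plus-rank-one lemma and the $g=0$ versus $g>0$ dichotomy there is no route from it to the conclusion, so as it stands the proposal has a genuine gap precisely where you flag ``the main technical difficulty.''
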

 	We begin with the following
	\begin{lemma}\label{assist_claim}
		Suppose $\mathcal{H}=\mathcal{H}_1\oplus\mathcal{H}_2$, $T_i:D\left(T_i\right)\subseteq\mathcal{H}_i\to\mathcal{H}_i$ are self-adjoint operators with cyclic vectors $\varphi_i$, and $T:D\left(T\right)\subseteq\mathcal{H}\to\mathcal{H}$ is given by $T=T_1\oplus T_2+\langle\varphi,\cdot\rangle\varphi$, where $\varphi=\varphi_1\oplus\varphi_2$. Denote by $\mu_i$ the spectral measure of $\varphi_i$ with respect to $T_i$ and by $\mu$ the sum of spectral measures of $\varphi_i$ w.r.t.\ $T$. Let $X,Y,Z\subseteq\mathbb{R}$ be Borel sets such that $\mu_1|_X\sim\mu_2|_X$ (in  the sense of mutual absolute continuity) and $\mu_1\left(Y\right)=\mu_2\left(Z\right)=0$. Then, if $\mu_1\left(X\right)>0$ then $\mu\left(X\right)>0$.
	\end{lemma}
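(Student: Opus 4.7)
The plan is to leverage the $T_0$-cyclic subspace of $\varphi$ (where $T_0:=T_1\oplus T_2$), on whose orthogonal complement the rank-one perturbation is invisible, reducing the problem to an explicit computation in the spectral representation of $T_0$.

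First I would let $\mathcal{H}_\varphi$ denote the $T_0$-cyclic subspace of $\varphi$. Since $\langle\varphi,\cdot\rangle\varphi$ takes values in $\mathbb{C}\varphi\subseteq\mathcal{H}_\varphi$ and vanishes on $\mathcal{H}_\varphi^\perp$ (because every $w\in\mathcal{H}_\varphi^\perp$ is in particular orthogonal to $\varphi$), both $\mathcal{H}_\varphi$ and $\mathcal{H}_\varphi^\perp$ are $T$-invariant, and $T|_{\mathcal{H}_\varphi^\perp}=T_0|_{\mathcal{H}_\varphi^\perp}$. Writing $\varphi_1=\alpha+\beta$ with $\alpha\in\mathcal{H}_\varphi$ and $\beta\in\mathcal{H}_\varphi^\perp$, invariance and orthogonality give $\mu_{\varphi_1}^T=\mu_\alpha^T+\mu_\beta^{T_0}$, whence $\mu(X)\geq\mu_{\varphi_1}^T(X)\geq\mu_\beta^{T_0}(X)$. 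It therefore suffices to show $\mu_\beta^{T_0}(X)>0$.

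For this I would pass to the spectral representation of $T_0$ afforded by the cyclic pair $\{\varphi_1,\varphi_2\}$, identifying $\mathcal{H}_1\oplus\mathcal{H}_2$ with $L^2(\mu_1)\oplus L^2(\mu_2)$, with $T_0$ acting as multiplication by $x$ and each $\varphi_i$ corresponding to the constant function $1$ in its coordinate. Then $\varphi=(1,1)$ and $\mathcal{H}_\varphi=\{(f,f):f\in L^2(\mu_1+\mu_2)\}$. Setting $\mu_0=\mu_1+\mu_2$ and $p_i=d\mu_i/d\mu_0$ (so $p_1+p_2=1$ $\mu_0$-a.e.), a pointwise minimization of $\|(1,0)-(f,f)\|^2=\int|1-f|^2\,d\mu_1+\int|f|^2\,d\mu_2$ yields $f=p_1$, so $\alpha=(p_1,p_1)$ and $\beta=(p_2,-p_1)$. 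The spectral measure of $\beta$ under multiplication by $x$ is then
\[d\mu_\beta^{T_0}=p_2^2\,d\mu_1+p_1^2\,d\mu_2=p_1 p_2(p_1+p_2)\,d\mu_0=p_1 p_2\,d\mu_0.\]

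To finish, the equivalence $\mu_1|_X\sim\mu_2|_X$ forces the sets $\{p_1>0\}\cap X$ and $\{p_2>0\}\cap X$ to coincide $\mu_0$-almost everywhere, and combined with $\mu_1(X)=\int_X p_1\,d\mu_0>0$ this gives $\mu_0(\{p_1 p_2>0\}\cap X)>0$; therefore $\mu_\beta^{T_0}(X)=\int_X p_1 p_2\,d\mu_0>0$, yielding $\mu(X)>0$. I expect the main technical point to be the explicit identification of $\beta$ in the spectral representation together with the resulting density formula, as this is the only step where the equivalence hypothesis is used. The auxiliary null sets $Y,Z$ seem not to enter this argument; they are presumably placeholders for small exceptional sets to be discarded when the lemma is applied in the proof of Theorem \ref{main_thm_3_prelim}.
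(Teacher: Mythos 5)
Your proof is correct, and its skeleton is the same as the paper's: both pass to the $T_0$-cyclic subspace $\mathcal{H}_\varphi$ generated by $\varphi$ (the paper's $\mathcal{K}_1$), observe that the rank-one perturbation vanishes on its orthogonal complement so that $T$ and $T_0$ agree there, and then work in the spectral representation $L^2(\mu_1)\oplus L^2(\mu_2)$. Where you diverge is the endgame. The paper characterizes the complement $\mathcal{K}_2$ as the vectors $(h_1,h_2)$ with $\bar h_1\,d\mu_1+\bar h_2\,d\mu_2=0$ supported on $X$ (this is where the sets $Y,Z$ and the implicit partition $\mathbb{R}=X\sqcup Y\sqcup Z$ enter), picks an arbitrary nonzero $\psi\in\mathcal{K}_2$, and concludes $\mu(X)>0$ from the fact that the $T$-spectral measure of $\psi$ charges $X$ — a step that tacitly uses that $\{\varphi_1,\varphi_2\}$ remains cyclic for the perturbed operator $T$ (so that $\mu_\psi\ll\mu$, via Lemma \ref{cyclicity_claim}). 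You instead project $\varphi_1$ itself, compute $\beta=(p_2,-p_1)$ and $d\mu_\beta^{T_0}=p_1p_2\,d\mu_0$, and get the quantitative bound $\mu(X)\geq\int_X p_1p_2\,d\mu_0>0$ directly from $\mu\geq\mu_{\varphi_1}^T\geq\mu_\beta^{T_0}$. This buys you a more self-contained argument: no appeal to cyclicity of the pair for $T$, and, as you correctly observe, no use of $Y,Z$ at all — the hypothesis on them is not needed for the stated conclusion, and in the application they only serve to set up the trichotomy. The paper's version, in exchange, identifies all of $\mathcal{K}_2$, which is mildly more information but not needed here. One small point worth making explicit if you write this up: since $T$ may be unbounded, phrase the invariance as $\mathcal{H}_\varphi$ being a reducing subspace for $T$ (the perturbation is bounded, so $D(T)=D(T_0)$ and the spectral projections of $T$ commute with the projection onto $\mathcal{H}_\varphi$); this is what justifies $\mu_{\varphi_1}^T=\mu_\alpha^T+\mu_\beta^T$ and $\mu_\beta^T=\mu_\beta^{T_0}$.
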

	\begin{proof}
		Denote $B=T_1\oplus T_2$. Let $\mathcal{K}_1$ be the cyclic subspace of $\mathcal{H}$ generated by $B$ and $\varphi$, and let $\mathcal{K}_2=\mathcal{K}_1^\perp$. Note that in $L^2\left(\mathbb{R},d\mu_1\right)\oplus L^2\left(\mathbb{R},d\mu_2\right)$, we have that $\left(h_1,h_2\right)\in\mathcal{K}_2$ if and only if $h_j$ are supported on $X$, and for every bounded Borel function $g$,
		\begin{center}
			$\int gh_1\,d\mu_1+\int gh_2\,d\mu_2=0$.
		\end{center}
		This implies that $\mathcal{K}_2\neq\left\{0\right\}$ if and only if $\mu_1\left(X\right)>0$, and that the spectral measure (with respect to $B$) of vectors in $\mathcal{K}_2$ is supported on $X$. Finally, note that for every $\psi\in\mathcal{K}_2$, we have $\langle\varphi,\psi\rangle=0$ and so $H\psi=B\psi$. Thus, $H|_{\mathcal{K}_2}=B|_{\mathcal{K}_2}$. Now, suppose that $\mu_1\left(X\right)>0$ and take any non-zero vector $\psi\in\mathcal{K}_2$. Then its spectral measure (with respect to $B$) is supported on $X$, and so its spectral measure (with respect to $H$) is also supported on $X$. This implies that $\mu\left(X\right)>0$, as required.
	\end{proof}
	Let us now consider $H:D\left(H\right)\subseteq\ell^2\left(\mathbb{Z}\right)\to\ell^2\left(\mathbb{Z}\right)$ which satisfies the assumptions of Theorem \ref{main_thm_3_prelim}. Denote $\widetilde{I}=\left[0,\pi\right)\setminus\left\{0,\frac{\pi}{2}\right\}$. For every $\theta\in\widetilde{I}$, $H$ can be written as \mbox{$H=H_-^{\theta}\oplus H_+^\theta+\langle\varphi_\theta,\cdot\rangle\varphi_\theta$} for some \mbox{$\varphi_\theta=\varphi_-^\theta\oplus\varphi_+^\theta$}, where $\varphi_\pm^\theta$ is a cyclic vector for $H_\pm^\theta$. This can be seen by noting that, if we denote $B=H_-^{\theta}\oplus H_+^\theta$, then for every $\psi\in\ell^2\left(\mathbb{Z}\right)$, we have
	\begin{center}
		$\left(H-B\right)\psi\left(n\right)=\begin{cases}
			0 & n\notin\left\{0,1\right\}\\
			\psi\left(0\right)-\tan\theta\psi\left(1\right) & n=1\\
			\psi\left(1\right)-\cot\theta\psi\left(0\right) & n=0
		\end{cases}$
	\end{center}
	and now, it is easy to verify that $\varphi_\theta=\sqrt{-\cot\theta}\delta_0+\sqrt{-\tan\theta}\delta_1$ will do the job.
	
	By Proposition \ref{sub_thm_line}, the spectral measure of $H$, $\mu$, is supported on the set of energies $E\in\mathbb{R}$ for which there exists a solution $\psi$ to (\ref{eq_ev_line}) which is subordinate at $\pm\infty$. We will denote this set by $S$. Recall that $\mu=\mu_{\delta_0}+\mu_{\delta_1}$. Let $\widetilde{S}\subseteq S$ be the set of energies for which the subordinate solution does not vanish at $0$ and at $1$. For every $i,j\in\mathbb{Z}$, let us denote by $\mu_{ij}$ the spectral measure of $\delta_i$ and $\delta_j$ with respect to $H$, and by $M_{ij}$ its Borel transform. It is shown in the appendix of \cite{L} that for every $E\in \widetilde{S}$, the corresponding subordinate solution $\psi$ satisfies
	\begin{center}
		$\psi\left(j\right)=\underset{\varepsilon\to0}{\lim}\frac{M_{j1}\left(E+i\varepsilon\right)}{M_1\left(E+i\varepsilon\right)},\,\,\,\,\,j\in\left\{0,1\right\}$
	\end{center}
	and so
	\begin{center}
		$\frac{\psi\left(0\right)}{\psi\left(1\right)}=\underset{\varepsilon\to0}{\lim}\frac{M_{01}\left(E+i\varepsilon\right)}{M_1\left(E+i\varepsilon\right)}=\frac{d\mu_{01}}{d\mu_1}\left(E\right)$.
	\end{center}
	In particular, since $\theta\left(E\right)=-\arctan\left(\frac{\psi\left(1\right)}{\psi\left(0\right)}\right)$, we get that $E\to\theta\left(E\right)$ is a measurable function. In addition, the function $G:\mathbb{R}\times\mathbb{C}_+\to\mathbb{R}$ which is defined by $G\left(\theta,z\right)=\frac{\im m_+^\theta\left(z\right)}{\im m_+^\theta\left(z\right)+\im m_-^{\theta}\left(z\right)}$ is continuous. Let $\left(q_n\right)_n\in\mathbb{N}$ be an enumeration of $\mathbb{Q}\cap\left(0,1\right)$. For every $n\in\mathbb{N}$, the function $F_n:\mathbb{R}\to\mathbb{R}$ which is defined by \mbox{$F_n\left(E\right)=G\left(\theta\left(E\right),E+iq_n\right)$} is measurable. Denote
	\begin{center}
		$A_1=\left\{E\in\mathbb{R}:\underset{n\to\infty}{\liminf}\,F_n\left(E\right)=0\right\},\,\,\,\,A_2=\mathbb{R}\setminus A_1$.
	\end{center} 
	Finally, denote $\widetilde{A_i}=\widetilde{S}\cap A_i$ for $i\in\left\{1,2\right\}$.
	\begin{claim}
		Either $\mu\left(\widetilde{A_1}\right)>0$ or $\mu\left(\widetilde{A_2}\right)>0$.
	\end{claim}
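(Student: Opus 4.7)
The plan is to reduce the claim to showing $\mu(\widetilde{S}) > 0$, since by construction $\widetilde{A_1} \sqcup \widetilde{A_2} = \widetilde{S}$. I would split this into the two statements $\mu(\mathbb{R}\setminus S) = 0$ and $\mu(S\setminus\widetilde{S}) = 0$, so that $\mu(\widetilde{S}) = \mu(\mathbb{R}) > 0$ will follow.

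For the first equality, the hypothesis $\dim_H(\mu_\pm^\theta) \leq \alpha < 1$ for every $\theta$ rules out any absolutely continuous component of the half-line measures. Standard half-line subordinacy theory shows that $(\mu_+^\theta)_{ac}$ is equivalent to Lebesgue measure restricted to $N_+$, so its vanishing forces $|N_+| = 0$, and likewise $|N_-| = 0$. Since Proposition~\ref{sub_thm_line}(2) places $\mu_{ac}$ on $N_-\cup N_+$ and $\mu_{ac}\ll\text{Lebesgue}$, we obtain $\mu_{ac} = 0$, and then Proposition~\ref{sub_thm_line}(1) yields $\mu(\mathbb{R}\setminus S) = 0$.

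For the second equality, write $S\setminus\widetilde{S} = B_0\cup B_{\pi/2}$ with $B_0 = \{E\in S: \psi_E(0) = 0\}$ and $B_{\pi/2} = \{E\in S: \psi_E(1) = 0\}$. For $E\in B_0$, the restriction of the whole-line subordinate solution to $\mathbb{Z}_+$ is the subordinate solution of $H_+^0 = H_+$ matching its Dirichlet boundary condition, so Proposition~\ref{prop_JL} forces $|m_+^0(E+i\varepsilon)|\to\infty$ on $B_0$. The strategy is then to show that $\mu|_{B_0}\ll\mu_+^0$. Granted this, choose a Borel support $F_0$ of $\mu_+^0$ with $\dim_H(F_0)\leq\alpha$ (which exists by the hypothesis $\dim_H(\mu_+^0)\leq\alpha$); then $\dim_H^-(\mu)\geq\beta>\alpha$ forces $\mu(F_0) = 0$, and the absolute continuity yields $\mu(B_0) \leq \mu(F_0) = 0$. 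The identical argument with $H_+^{\pi/2}$ in place of $H_+^0$ handles $B_{\pi/2}$.

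The principal obstacle is establishing $\mu|_{B_0}\ll\mu_+^0$. The inclusion $B_0\subseteq\{E: |m_+^0(E+i\varepsilon)|\to\infty\}$ is insufficient on its own, because $\mu_+^0$ may essentially charge only a proper, lower-dimensional subset of this blow-up set. To close the gap I would use the classical identity expressing the Borel transform $M$ of $\mu$ as a rational function of the half-line Weyl $m$-functions $m_\pm^\theta$ at the coupling site, combined with Kac's theorem (Proposition~\ref{rnd_m_func_thm}) applied to the Lebesgue decomposition of $\mu$ relative to $\mu_+^0$. The blow-up of $m_+^0$ on $B_0$ should force the $\mu_+^0$-singular part of $\mu|_{B_0}$ to vanish there, yielding the desired absolute continuity. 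This is where the whole-line-to-half-line structural input, in the spirit of \cite{JL1,DKL}, must enter the argument.
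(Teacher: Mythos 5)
Your reduction to showing $\mu(\widetilde{S})>0$ is exactly the paper's first move, and your argument that $\mu$ is carried by $S$ (no absolutely continuous part because $\dim_H(\mu_\pm^\theta)\leq\alpha<1$ forces $|N_\pm|=0$, then Proposition \ref{sub_thm_line}) is correct — indeed more detailed than the paper, which simply asserts this. The problem is the second half. Everything there hinges on the absolute continuity $\mu|_{B_0}\ll\mu_+^0$ (and its analogue for $B_{\pi/2}$ with the shifted operator), and you do not prove it: you flag it yourself as ``the principal obstacle'' and offer only a hoped-for argument (``should force'') via Kac's theorem and the expression of $M$ through $m_\pm$. This is not a routine fill-in but the crux. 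Knowing that $B_0$ sits inside the blow-up set of $m_+^0$ only says that $B_0$ is contained in a set supporting the singular part of $\mu_+^0$; it gives no control of $\mu$-mass by $\mu_+^0$-mass. Worse, the paper's Proposition \ref{any_bd_con_prop} (the engine of Theorem \ref{main_thm_3}) shows that precisely this type of statement fails for $\theta\in\widetilde{I}$: $\mu$ can give positive weight to a set of energies whose subordinate solution matches the boundary condition $\theta$ at $+\infty$ while $\mu_+^\theta$ gives that set zero weight. So there is no reason to expect $\mu|_{B_0}\ll\mu_+^0$ to drop out of soft boundary-behavior considerations; if it is true at all for the decoupling boundary conditions $\theta\in\{0,\tfrac{\pi}{2}\}$, it requires a genuine proof, and as written your step ``$\mu(S\setminus\widetilde{S})=0$'' is a gap.

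For comparison, the paper handles exactly this step by a soft argument that needs no whole-line/half-line comparison and no dimension hypotheses. Suppose $\mu(\widetilde{S})=0$; since $\{\delta_0,\delta_1\}$ is cyclic, $P(\widetilde{S})=0$, so $\ell^2(\mathbb{Z})$ is the orthogonal sum of the spectral subspaces $\mathcal{H}_0,\mathcal{H}_1$ corresponding to the sets $N_0,N_1\subseteq S$ on which the subordinate solution vanishes at site $0$, respectively $1$. The boundary-value identities quoted from \cite{L} just before the claim give $\mu_{\delta_i}(N_i)=0$, i.e.\ every vector of $\mathcal{H}_i$ vanishes at site $i$. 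Hence $\delta_0\perp\mathcal{H}_0$, so $\delta_0$ lies in the complementary spectral subspace, which is $H$-invariant and consists of vectors vanishing at site $1$; this contradicts $(H\delta_0)(1)=1\neq0$. If you want to salvage your route, you would have to actually establish $\mu|_{B_0}\ll\mu_+^0$, which is a substantially harder (and possibly false) statement than the claim itself; the orthogonality argument above is the efficient way around it.
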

	\begin{proof}
		It suffices to show that $\mu\left(\widetilde{S}\right)>0$. Suppose not, and for $i=0,1$, let $N_i$ be the set of $E\in\mathbb{R}$ for which the subordinate solution vanishes at $i$. Recall that $P$ is the projection-valued spectral measure of $H$. By our assumption, $P\left(\widetilde{S}\right)=0$ and so \mbox{$\ell^2\left(\mathbb{Z}\right)=\Ran P\left(N_0\right)\oplus\Ran P\left(N_1\right)\coloneqq\mathcal{H}_0\oplus\mathcal{H}_1$}. Furthermore, every $\varphi\in\mathcal{H}_i$ vanishes at $\delta_i$. Thus, it must be the case that $\delta_0\in\mathcal{H}_0$. In addition, $\mathcal{H}_0$ is invariant under $H$. On the other hand, clearly, $\left(H\delta_0\right)\left(1\right)\neq 0$, which is a contradiction.
	\end{proof}
	\begin{proposition}\label{any_bd_con_prop}
		For every $\theta\in\widetilde{I}$, $\mu_{+}^\theta\left(\widetilde{A_1}\right)=\mu_-^{\theta}\left(\widetilde{A_2}\right)=0$.
	\end{proposition}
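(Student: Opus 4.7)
The plan is to prove both equalities by a common strategy: first match the line boundary angle via $\theta(E)=\theta$ using subordinacy uniqueness, then extract asymptotic information on $G(\theta,E+i\varepsilon)$ from Kac's Radon-Nikodym formula (Proposition \ref{rnd_m_func_thm}) to contradict membership in $\widetilde{A_1}$ or $\widetilde{A_2}$.

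The preliminary step is to show that $\theta(E)=\theta$ holds for $\mu_\pm^\theta$-a.e. $E\in\widetilde{S}$. Since $\dim_H\mu_\pm^\theta\leq\alpha<1$ each of the half-line measures is purely singular, and by the subordinacy theorem (the Gilbert-Pearson/Khan-Pearson remark following Proposition \ref{prop_JL}) its singular part is supported on the set of energies at which $u_\theta^\pm$ is subordinate on the corresponding half-line. For $E\in\widetilde{S}$ the line-subordinate solution exists and its restrictions to $\mathbb{Z}_\pm$ are subordinate there with boundary angle $\theta(E)$; uniqueness up to a scalar of the half-line subordinate solution then forces $\theta(E)=\theta$. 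Consequently $F_n(E)=G(\theta,E+iq_n)$ on a $\mu_\pm^\theta$-full subset of $\widetilde{S}$.

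For $\mu_+^\theta(\widetilde{A_1})=0$, apply Proposition \ref{rnd_m_func_thm} with $\mu=\mu_+^\theta$ and $\nu=\mu_+^\theta+\mu_-^\theta$: for $\mu_+^\theta$-a.e. $E$ the limit $\lim_{\varepsilon\to 0^+}G(\theta,E+i\varepsilon)=d\mu_+^\theta/d\nu(E)$ exists and is strictly positive. Every accumulation point of $(F_n(E))$ arises from a subsequence $q_{n_k}\to q\in[0,1]$: for $q=0$ the accumulation point is this positive Kac density, while for $q>0$ continuity of $G$ forces the accumulation point to equal $G(\theta,E+iq)\in(0,1)$. Thus $\liminf_n F_n(E)>0$ holds $\mu_+^\theta$-a.e. on $\widetilde{S}$, contradicting $E\in A_1$.

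The equality $\mu_-^\theta(\widetilde{A_2})=0$ is the main obstacle, because the symmetric Kac argument only yields $\lim_\varepsilon G(\theta,E+i\varepsilon)<1$ for $\mu_-^\theta$-a.e. $E$, which is compatible with $\liminf G>0$. The sharpening needed is that $\mu_+^\theta$ and $\mu_-^\theta$ are mutually singular: given mutual singularity, the Kac limit takes only the values $0$ and $1$ $(\mu_++\mu_-)$-a.e., and combined with $\lim G<1$ $\mu_-^\theta$-a.e. this forces $\lim G=0$ $\mu_-^\theta$-a.e., hence $\liminf G=0$, contradicting $E\in A_2$. Mutual singularity in turn can be extracted from the rank-one decomposition $H=(H_-^\theta\oplus H_+^\theta)+\langle\varphi_\theta,\cdot\rangle\varphi_\theta$ combined with the dimension hypothesis: were there a set $X$ with $\mu_-^\theta|_X\sim\mu_+^\theta|_X$ and $\mu_\pm^\theta(X)>0$, the argument in the proof of Lemma \ref{assist_claim} would produce a nonzero vector in $\mathcal{K}_1^\perp$ whose $H$-spectral measure is supported on $X$ and hence on a set of Hausdorff dimension at most $\alpha$; by Lemma \ref{cyclicity_claim} this measure is absolutely continuous with respect to $\mu$, contradicting the hypothesis $\dim_H\mu\geq\beta>\alpha$.
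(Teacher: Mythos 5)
Your proof is correct and follows essentially the same route as the paper: subordinacy pins $\theta\left(E\right)=\theta$ on the relevant support, Kac's theorem (Proposition \ref{rnd_m_func_thm}) applied to $\mu_+^\theta$ versus $\mu_+^\theta+\mu_-^\theta$ gives $\mu_+^\theta\left(\widetilde{A_1}\right)=0$, and $\mu_-^{\theta}\left(\widetilde{A_2}\right)=0$ rests on the mutual singularity of $\mu_\pm^\theta$ obtained from Lemma \ref{assist_claim} combined with the dimension gap $\alpha<\beta$. Your formulation of the last step (the Kac density lies in $\left\{0,1\right\}$ almost everywhere, is $<1$ $\mu_-^\theta$-a.e., hence vanishes $\mu_-^\theta$-a.e., forcing $\liminf_n F_n=0$) is just a contrapositive restatement of the paper's decomposition into the sets where $g\in\left(0,1\right)$, $g=0$ and $g=1$.
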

	\begin{proof}
		First note that by Proposition \ref{prop_JL}, $\mu_+^\theta\left(\widetilde{A_1}\right)=\mu_+^\theta\left(B\right)$, where \mbox{$B=\left\{E\in\widetilde{A_1}:\theta\left(E\right)=\theta\right\}$}. Note that for every $E\in B$, we have $\underset{n\to\infty}{\liminf}\frac{\im m_+^\theta\left(E+q_n\right)}{\im m_+^\theta\left(E+q_n\right)+\im m_-^{\theta}\left(E+q_n\right)}=0$. In particular, $\underset{\varepsilon\to0}{\liminf}\,\frac{\im m_+^\theta\left(E+i\varepsilon\right)}{\im m_+^\theta\left(E+i\varepsilon\right)+\im m_-^{\theta}\left(E+i\varepsilon\right)}=0$. By Proposition \ref{rnd_m_func_thm}, we know that
		\begin{equation}\label{eq_g_def}
		    g\coloneqq\frac{d\mu_+^\theta}{d\left(\mu_+^\theta+\mu_-^{\theta}\right)}=\underset{\varepsilon\to0}{\lim}\frac{\im m_+^\theta\left(E+i\varepsilon\right)}{\im m_+^\theta\left(E+i\varepsilon\right)+\im m_-^{\theta}\left(E+i\varepsilon\right)}
		\end{equation}
		and so we get
		\begin{center}
			$\mu_+^\theta\left(B\right)=\int_Bgd\left(\mu_+^\theta+\mu_-^{\theta}\right)=0$,
		\end{center}
		as required. To see that $\mu_-^{\theta}\left(\widetilde{A_2}\right)=0$, first note that we cannot have a Borel set $X\subseteq\mathbb{R}$ such that $\mu_+^\theta|_X\sim\mu_-^\theta|_X$. Indeed, if it were the case, then by Lemma \ref{assist_claim} we would have that $\mu\left(X\right)>0$. On the other hand, since both $\mu_\pm^\theta$ are of Hausdorff dimension $\leq\alpha$, we may assume that $\dim_\text{H}\left(X\right)\leq\alpha$. Finally, since $\mu$ assigns zero weight to every set of Hausdorff dimension less than $\beta$, we obtain $\mu\left(X\right)=0$ which contradicts our assumption. This implies that for any decomposition of $\mathbb{R}$ to disjoint Borel sets $\mathbb{R}=X\sqcup Y\sqcup Z$ such that $\mu_+^\theta\left(Y\right)=\mu_-^{\theta}\left(Z\right)=0$ and $\mu_+^\theta|_X\sim\mu_-^{\theta}|_X$, we must have $\mu_+^\theta\left(X\right)=\mu_-^{\theta}\left(X\right)=0$. Note that such a decomposition can be taken to be
		\begin{center}
			$X=\left\{E\in\mathbb{R}:g\left(E\right)\in\left(0,1\right)\right\}$,\\
			$Y=\left\{E\in\mathbb{R}:g\left(E\right)=0\right\}$,\\
			$Z=\left\{E\in\mathbb{R}:g\left(E\right)=1\right\}$,
		\end{center}
		where $g$ is given by (\ref{eq_g_def}). Thus, we have that
		\begin{center}
			$\mu_-^{\theta}\left(\widetilde{A_2}\right)=\mu_-^{\theta}\left(\widetilde{A_2}\cap X\right)+\underset{=0}{\underbrace{\mu_-^{\theta}\left(\widetilde{A_2}\cap Z\right)}}=\mu_-^{\theta}\left(\widetilde{A_2}\cap X\right)=\mu_-^{\theta}\left(\emptyset\right)=0$,
		\end{center} 
		as required.
	\end{proof}
	Finally, we are ready to prove Theorem \ref{main_thm_3_prelim}, which will immediately imply Theorem \ref{main_thm_3}.
	\begin{proof}[Proof of Theorem \ref{main_thm_3_prelim}]
		If $\mu\left(\widetilde{A_1}\right)>0$, then by Proposition \ref{any_bd_con_prop} we have that $\mu_+^\theta\left(\widetilde{A_1}\right)=0$ for every $\theta\in\widetilde{I}$. If $\mu\left(\widetilde{A_2}\right)>0$ then for every $\theta\in\widetilde{I}$, $\mu_-^{\theta}\left(\widetilde{A_2}\right)=0$. Now, note that for $\theta=0,\frac{\pi}{2}$, $\mu_\pm^\theta$ is supported on the set of $E\in\mathbb{R}$ for which $\theta\left(E\right)=0,\frac{\pi}{2}$ respectively, and clearly, for every $E\in\widetilde{S}$, $\theta\left(E\right)\neq 0,\frac{\pi}{2}$. Thus, for $\theta=0,\frac{\pi}{2}$, $0=\mu_\pm^\theta\left(\widetilde{S}\right)\geq\mu_{\pm}^\theta\left(\widetilde{A_i}\right)$ and Theorem \ref{main_thm_3_prelim} is proved and consequently, Theorem \ref{main_thm_3} is also proved.
	\end{proof}
	

\begin{thebibliography}{10}
		
		\bibitem{Ber} Ju.~M.~Berezanski\u{i}, {\it Expansions in Eigenfunctions of Self-Adjoint Operators}, Transl.\ Math.\ Mono. \textbf{17}, Amer.\ Math.\ Soc., Providence, RI, 1968.
		
		\bibitem{CdO} C.~L.~Carvalho and C.~R.~de Oliveira, {\it Spectral packing dimensions through power-law subordinacy}, Ann.\ Henri Poincar\'{e} \textbf{14} (2013), 775--792.
		
		\bibitem{Cut} C.~D.~Cutler, {\it  Measure disintegrations with respect to $\sigma$-stable monotone indices and the pointwise representation of packing dimension}, Supp.\ Rend.\ Circ.\ Mat.\ Palermo \textbf{28} (1992), 319--339.
		
		\bibitem{DF} D.~Damanik and J.~Fillman, {\it One-dimensional ergodic Schrödinger operators--I. General theory}, Grad.\ Stud.\ Math.\ \textbf{221}, American Mathematical Society, Providence, RI (2022).

        \bibitem{DF2} D.~Damanik and J.~Fillman, {\it One-dimensional ergodic Schrödinger operators--II. Specific classes}, Grad.\ Stud.\ Math.\ \textbf{221}, American Mathematical Society, Providence, RI (2025).
        
		\bibitem{DJLS} R.~del Rio, S.~Jitomirskaya, Y.~Last and B.~Simon, {\it Operators with singular continuous spectrum, IV. Hausdorff dimensions, rank one perturbations, and localization
		}, J. Anal. Math \textbf{69} (1996), 153--200.
	
		\bibitem{DJMS} R.~del Rio, S.~Jitomirskaya, N.~Makarov and B.~Simon, {\it Singular continuous spectrum is generic}, Bull.\ Amer.\ Math.\ Soc. \textbf{31} (1994), 208--212.
	
		\bibitem{DKL} D.~Damanik, R.~Killip and D.~Lenz, {\it Uniform Spectral Properties of One-Dimensional Quasicrystals, III. $\alpha$-continuity}, Commun.\ Math.\ Phys \textbf{212} (2000), 191--204.
		
		\bibitem{DMS} R.~del Rio, N.~Makarov and B.~Simon, {\it Operators with singular continuous spectrum. II. Rank one
			operators}, Commun.\ Math.\ Phys.\ \textbf{165} (1994), 59--67.
		
		\bibitem{DT} D.~Damanik and S.~Tcheremchantsev, {\it Scaling estimates for solutions and dynamical lower bounds on wavepacket spreading}, J.\ Anal.\ Math \textbf{97} (2005), 103--131.
		
		\bibitem{Gor} A.~Ya.~Gordon {\it Pure point spectrum under 1-parameter perturbations and instability of Anderson localization}, Commun.\ Math.\ Phys.\ \textbf{164} (1994), 489--505.
		
		\bibitem{GP} D.~J.~Gilbert and D.~B.~Pearson, {\it On subordinacy and analysis of the spectrum of one-dimensional Schr\"{o}dinger operators}, J.\ Math.\ Anal.\ Appl. \textbf{128} (1987), 30--56.
		
		\bibitem{GSB} I.~Guarneri and H.~Schulz-Baldes, {\it Lower bounds on wave packet propagation by packing dimensions of spectral measures}, Math.\ Phys.\ Electron.\ J.\ \textbf{16} (1999), Paper 1.
		
		\bibitem{JK} S.~Jitomirskaya and I.~Kachkovskiy, {\it Sharp arithmetic localization for quasiperiodic operators with monotone potentials.}, preprint. 	arXiv:2407.00703.
		
		\bibitem{JL1} S.~Jitomirskaya and Y.~Last, {\it Power law subordinacy and singular spectra, I. Half-line operators}, Acta Math. \textbf{183} (1999), 171--189.
		
		\bibitem{JL2} S.~Jitomirskaya and Y.~Last, {\it Power law subordinacy and singular spectra, II. Line operators}, Commun.\ Math.\ Phys. \textbf{211} (2000), 643--658.
		
		\bibitem{JLT} S.~Jitomirskaya, W.~Liu and S.~Tcheremchantsev, {\it Lower bounds on concentration through Borel transform and quantitative singularity of spectral measures near the arithmetic transition}, preprint. arXiv:2501.12153.
		
		\bibitem{Kac} I.~S.~Kac, {\it Spectral multiplicity of a second-order differential operator and expansion in eigenfunction. (Russian)}, Izv.\ Akad.\ Nauk SSSR Ser.\ Mat. \textbf{27} (1963), 1081--1112.
		
		\bibitem{Kato} T.~Kato, {\it Perturbation Theory for Linear Operators}, Springer-Verlag, Berlin, 1995.
		
		\bibitem{KKL} R.~Killip, A.~Kiselev and Y.~Last, {\it Dynamical upper bounds on wavepacket spreading}, Amer.\ J.\ Math.\ \textbf{125} (2003), 1165--1198.
		
		\bibitem{KP} S.~Khan and D.~B.~Pearson, {\it Subordinacy and spectral theory for infinite matrices}, Helv.\ Phys.\ Acta \textbf{65} (1992), 505--527.
		
		\bibitem{KPS} I.~Kachkovskiy, L.~Pranovski and R.~Shterenberg, {\it On gaps in the spectra of quasiperiodic Schrödinger operators with discontinuous monotone potentials}, preprint. arXiv:2407.00705.
		
		\bibitem{L} N.~Levi, {\it Eigenfunction Expansion and the Decomposition of Jacobi Operators on $\mathbb{Z}$}, preprint. arXiv:2502.03707.
		
		\bibitem{LS} Y.~Last and B.~Simon, {\it Eigenfunctions, transfer matrices, and absolutely continuous spectrum of one-dimensional Schr\"{o}dinger operators}, Invent.\ Math.\ \textbf{135} (1999), 329--367.
		
		\bibitem{RS} M.~Reed and B.~Simon, {\it Methods of modern mathematical physics, I: Functional analysis}, Academic press, New York, 1972.
		
		\bibitem{RT} C.~A.~Rogers and S.~J.~Taylor, {\it The analysis of additive set functions in Euclidean space}, Acta Math. \textbf{101} (1959), 273--302.
		
		\bibitem{Z} A.~Zlato\u{s}, {\it Sparse potentials with fractional Hausdorff dimension}, J.\ Funct.\ Anal.\ \textbf{207} (2004), 216--252.
		
	\end{thebibliography}
\end{document}